\setlist[enumerate]{labelsep=*, leftmargin=1.5pc}
\setlist[enumerate]{label=\normalfont(\roman*), ref=\roman*}
\newtheorem{thm}{Theorem}[section]
\newtheorem{lemma}[thm]{Lemma}
\newtheorem{cor}[thm]{Corollary}
\newtheorem{conjecture}[thm]{Conjecture}
\theoremstyle{definition}
\newtheorem{example}[thm]{Example}
\newtheorem{definition}[thm]{Definition}
\numberwithin{equation}{section}
\newcommand{\Z}{\mathbb{Z}}
\newcommand{\Q}{\mathbb{Q}}
\newcommand{\R}{\mathbb{R}}
\newcommand{\C}{\mathbb{C}}
\newcommand{\mO}{\mathcal{O}}
\newcommand{\ph}{\varphi}
\newcommand{\pr}{\mathbb{P}}
\newcommand{\on}{\operatorname}
\newcommand{\wt}{\widetilde}
\newcommand{\mfk}{\mathfrak}
\begin{document}
%-------------------------------------------------------------------------------
\author[B.\,Wormleighton]{Ben~Wormleighton}
\address{Department of Mathematics\\University of California at Berkeley\\Berkeley, CA\\94720\\USA}
\email{b.wormleighton@berkeley.edu}
%-------------------------------------------------------------------------------
\keywords{ECH capacities, Hilbert function, Ehrhart theory, toric geometry}
\subjclass[2010]{53D35 (Primary); 14M25, 52B20 (Secondary)}
%-------------------------------------------------------------------------------
\title{ECH capacities, Ehrhart theory, and toric varieties}
\maketitle
%-------------------------------------------------------------------------------

\begin{abstract}
ECH capacities were developed by Hutchings to study embedding problems for symplectic $4$-manifolds with boundary. They have found especial success in the case of certain toric symplectic manifolds where many of the computations resemble calculations found in cohomology of $\Q$-line bundles on toric varieties, or in lattice point counts for rational polytopes. We formalise this observation in the case of convex toric lattice domains $X_\Omega$ by constructing a natural polarised toric variety $(Y_{\Sigma(\Omega)},D_\Omega)$ containing the all the information of the ECH capacities of $X_\Omega$ in purely algebro-geometric terms. Applying the Ehrhart theory of the polytopes involved in this construction gives some new results in the combinatorialisation and asymptotics of ECH capacities for convex toric domains.
\end{abstract}

\section{Introduction}

Symplectic capacities measure obstructions to embedding one symplectic manifold into another. Perhaps the simplest such obstruction is the volume; a symplectic manifold $(X_1,\omega_1)$ can be embedded in another symplectic manifold $(X_2,\omega_2)$ only if $\on{vol}(X_1,\omega_1)\leq\on{vol}(X_2,\omega_2)$. A more sophisticated obstruction is the Gromov width: the supremum of the radii of balls that can symplectically embed into the given symplectic manifold. As Gromov's nonsqueezing theorem \cite{grom} illustrates, this is a nontrivial and interesting invariant even for simple submanifolds of $\R^n$.
\\

There are many different capacities in past and current usage - see \cite{chls} and the numerous references therein for an overview - that were invented in order to answer more sophisticated embedding questions about symplectic $4$-manifolds. In this paper we will focus on \textit{Embedded Contact Homology} or \textit{ECH capacities}, which were developed by Hutchings in \cite{h11} and have since been studied by many authors in, for example, \cite{ccfhr}, \cite{chr}, \cite{cg}, \cite{cs}. To an exact symplectic $4$-manifold $X$ with contact-type boundary they associate an increasing sequence of real numbers $c_k(X)$ for $k\in\Z_{\geq0}$. One of their early successes was studying embeddings of ellipsoids where the ellipsoid with symplectic radii $a,b$
$$E(a,b):=\{(x,y)\in\C^2:|x^2|/\pi a+|y^2|/\pi b\leq 1\}$$
embeds into $E(c,d)$ iff $c_k(E(a,b))\leq c_k(E(c,d))$ for all $k$. Moreover, $c_k(E(a,b))$ was computed to be the $k$th largest number of the form $am+bn$ for $m,n\in\Z_{\geq0}$.
\\

A particular type of symplectic manifold that ECH capacities provide an attractive means of studying is toric domains. Consider the moment map
$$\mu:\C^2\to\R^2$$
for the $2$-torus action on $\C^2$. Given a region $\Omega\subset\R^2$, $X_\Omega:=\mu^{-1}(\Omega)$ is a toric symplectic $4$-manifold potentially with boundary. If the domain $\Omega$ is a certain kind of convex polygon with two edges lying on the coordinate axes, $X_\Omega$ is called a \textit{convex toric domain}. We omit mention of the symplectic form since we will always take the induced form from $\C^2$. Such symplectic manifolds are exact with contact-type boundary. The work of Cristofano-Gardiner--Choi \cite{cg} provides a somewhat combinatorial formula for the ECH capacities of such spaces in terms of lattice paths and lattice point counts.
\\

Define the \textit{cap function} of a symplectic $4$-manifold $X$ with contact-type boundary to be
\begin{align*}
\on{cap}_X(r)&:=\#\{k\in\Z_{\geq0}:c_k(X)\leq r\} \\
&=1+\on{max}\{k\in\Z_{\geq0}:c_k(X)\leq r\}
\end{align*}
for $r\in\Z_{\geq0}$. In certain situations - such as ellipsoids with integral symplectic radii - the cap function recovers all of the ECH capacities. The main results of this paper apply to the capacities and cap functions of convex toric domains where $\Omega$ is in addition a lattice polygon.
\\

For a rational polygon $\Omega\subset\R^2$ we consider the inner normal fan $\Sigma(\Omega)$, which is the complete fan whose rays are the (primitive) inward-pointing normal vectors to the edges of $\Omega$. This defines a toric variety $Y_{\Sigma(\Omega)}$. We will later define a divisor $D_\Omega$ on $Y_{\Sigma(\Omega)}$ called the \textit{balance divisor}. The key property of this divisor is that its associated polytope is equal to $\Omega$.
\\

Recall that the function counting lattice points in dilates of a lattice polytope $P\subset\R^n$ is given by a polynomial $\on{ehr}_P$, called the Ehrhart polynomial of $P$, such that $\#(nP)\cap\Z^n=\on{ehr}_P(n)$ for $n\in\Z_{\geq0}$. Similarly, recall that the function counting global sections in integer multiples of a Cartier divisor $D$ on a variety $X$ is eventually given by a polynomial $\on{hilb}_{(X,D)}$, called the Hilbert polynomial of $(X,D)$, such that $h^0(X,nD):=\on{dim} H^0(X,nD)=\on{hilb}_{(X,D)}(n)$ for all sufficiently large $n\in\Z_{\geq0}$. 
\\

When $P$ is a rational polytope (or $D$ is a $\Q$-Cartier divisor), the Ehrhart function (resp. the Hilbert function) is given (resp. eventually given) by a \textit{quasipolynomial}: there exists a number $\pi\in\Z_{\geq1}$ and polynomials $L_0,\dots,L_{\pi-1}$ such that
$$L_P(n)=L_i(n)\text{ when $n\equiv i\on{mod}{\pi}$}$$
Alternatively, one can think of such a function as a polynomial with coefficients that are periodic functions.
\\

With the above setup, the main results of this paper are as follows. Let $X_\Omega$ be a convex toric domain. Choi--Cristofaro-Gardiner--Frenkel--Hutchings--Ramos \cite{ccfhr} and Cristofaro-Gardiner \cite{cg} associate a sequence of numbers to $\Omega$ called the \text{weight sequence} $w(\Omega)$. We will later define in Definition \ref{def:tight} a class of convex toric domains that are \textit{tightly constrained}. We conjecture (Conjecture \ref{conj:wt}) that this is equivalent to the gcd of the numbers in the weight sequence being equal to $1$; it is shown below to hold when one of the weights is equal to $1$ in Lemma \ref{lem:wt}.

\begin{thm} (Corollary \ref{cor:main}) \label{thm:1} Suppose $X_\Omega$ is a tightly constrained convex toric lattice domain. Then there exists some $r_0\in\Z_{\geq0}$ such that $\on{cap}_{X_\Omega}(r)$ is given by a quasipolynomial of known period $\lambda$ for all $r\geq r_0$. More precisely, for $r=0,\dots,\lambda-1$
$$\on{cap}_{X_\Omega}(r+\lambda x)=\on{ehr}_\Omega(x)+rx+\gamma_r$$
for some constant $\gamma_r\in\Z$ dependent only on $r$, whenever $r+\lambda x\geq r_0$.
\end{thm}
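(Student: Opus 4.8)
The plan is to realise $\on{cap}_{X_\Omega}(r)$ as a lattice-point count and then match it term by term against the Ehrhart polynomial of $\Omega$. The structural input is the correspondence developed above between the ECH capacities of $X_\Omega$ and the polarised toric variety $(Y_{\Sigma(\Omega)}, D_\Omega)$: since the polytope of the balance divisor $D_\Omega$ is exactly $\Omega$, we have $h^0\mleft(Y_{\Sigma(\Omega)}, x D_\Omega\mright) = \#\mleft(x\Omega \cap \Z^2\mright) = \on{ehr}_\Omega(x)$, and the correspondence identifies $\on{cap}_{X_\Omega}$, suitably reparametrised, with this Hilbert function. First I would record that $\on{cap}_{X_\Omega}(r)$ is eventually a quasipolynomial in $r$: by the combinatorial lattice-path formula for the ECH capacities of convex toric domains it counts lattice points in a region that grows linearly in $r$, so the count is eventually a quasipolynomial in $r$ by Ehrhart theory. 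Writing this quasipolynomial as $\on{cap}_{X_\Omega}(s) = \alpha\, s^2 + \beta(s)\, s + C(s)$ with $\beta$ and $C$ of period $\lambda$, the entire statement reduces to determining these three coefficients.

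Next I would fix the leading coefficient and the period. Because ECH capacities recover volume, $\on{cap}_{X_\Omega}(r) \sim r^2/\mleft(4\,\vol{X_\Omega}\mright)$, and in this normalisation $\vol{X_\Omega} = \on{Area}(\Omega)$, so $\alpha = 1/\mleft(4\,\on{Area}(\Omega)\mright)$. Substituting $s = r + \lambda x$ into the quasipolynomial, the coefficient of $x^2$ is $\alpha \lambda^2$; requiring this to match the leading term $\on{Area}(\Omega)\, x^2$ of $\on{ehr}_\Omega$ forces $\lambda = 2\,\on{Area}(\Omega)$, the normalised volume of $\Omega$. This is a positive integer for a lattice polygon (Pick) computed directly from $\Omega$, which is the sense in which $\lambda$ is known. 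The heart of the argument is the linear coefficient. After the same substitution the coefficient of $x$ equals $2\alpha\lambda\, r + \lambda\, \beta(r) = r + \lambda\,\beta(r)$, using $\alpha\lambda = \tfrac12$; for this to equal the required $\tfrac12 b + r$, where $b := \abs{\bdry{\Omega}\cap\Z^2}$ is the number of boundary lattice points, I must show that $\beta(r)$ is \emph{independent of the residue} $r \bmod \lambda$ and equals $b/(2\lambda)$. This residue-uniformity is exactly where tight-constrainedness is needed: the gcd-one condition on the weight sequence $w(\Omega)$ forces the capacity values to populate the residue classes mod $\lambda$ evenly, so that the boundary contributions to the count are the same for every residue. (For the ball $B(2)$, whose single weight has gcd $2$, all capacities are even, the residues are populated unevenly, $\beta$ genuinely depends on the residue, and the clean formula indeed fails.)

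With these two inputs the conclusion is bookkeeping. Substituting $s = r + \lambda x$ into $\on{cap}_{X_\Omega}(s) = \alpha s^2 + \tfrac{b}{2\lambda} s + C(r)$, expanding, and using $\lambda = 2\,\on{Area}(\Omega)$ together with the Ehrhart expansion $\on{ehr}_\Omega(x) = \on{Area}(\Omega)\, x^2 + \tfrac12 b\, x + 1$ of the lattice polygon $\Omega$, the quadratic terms collapse to $\on{ehr}_\Omega(x)$, the cross term contributes exactly $r x$, and every remaining term depends only on $r$. Collecting these gives $\on{cap}_{X_\Omega}(r + \lambda x) = \on{ehr}_\Omega(x) + r x + \gamma_r$ with $\gamma_r = \mleft(r^2 + b\, r\mright)/(2\lambda) + C(r) - 1$; since the left-hand side and $\on{ehr}_\Omega(x) + rx$ are integers, $\gamma_r \in \Z$ and depends only on $r$, as required. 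The threshold $r_0$ and the word ``eventually'' are inherited from the fact that the Hilbert function agrees with its Hilbert polynomial only for $x \gg 0$, equivalently that the capacity sequence settles into its quasipolynomial pattern only beyond an initial range. I expect the main obstacle to be this middle step: proving that $\on{cap}_{X_\Omega}$ is eventually a quasipolynomial of period $\lambda$ whose linear coefficient is residue-independent, and in particular converting the combinatorial tightly constrained hypothesis into this residue-uniformity. Everything else is essentially forced by the volume asymptotics and the Ehrhart expansion of $\Omega$.
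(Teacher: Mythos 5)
Your bookkeeping at the end is right and matches the paper's: the leading coefficient is forced by the volume asymptotics of ECH capacities, the period is $\lambda=2\on{Vol}(\Omega)=\ell_\Omega(\partial\Omega)$, and Pick's theorem is exactly what converts the linear coefficient into $\tfrac12 L_{\partial\Omega}$ so that the quadratic and linear terms assemble into $\on{ehr}_\Omega(x)+rx$. But the two claims that carry all of the content are asserted rather than proved, and you say as much yourself ("I expect the main obstacle to be this middle step"). That middle step \emph{is} the theorem. First, the claim that $\on{cap}_{X_\Omega}(r)$ is eventually a quasipolynomial in $r$ does not follow from "Ehrhart theory applied to a region growing linearly in $r$": by Theorem \ref{thm:opt} the cap function is a \emph{maximum of lattice-point counts over all convex lattice paths} of $\Omega$-perimeter at most $r$, not the lattice-point count of dilates of one fixed rational polytope. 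There is no single region being dilated; the optimiser changes with $r$, and showing that it eventually changes in a controlled, periodic way is precisely the difficulty. Second, the residue-independence of the linear coefficient $\beta(r)$ and its value $L_{\partial\Omega}/(2\lambda)$ are not derived from anything; and you conflate the definition of tightly constrained (the maximum of the constrained optimisation is attained with the constraint active, Definition \ref{def:tight}) with the gcd-one condition on the weights, which is only conjectured to be equivalent (Conjecture \ref{conj:wt}).

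The paper's route supplies exactly the missing mechanism. Using the weight sequence $w(\Omega)=(c;a_i;b_j)$ and Lemma \ref{lem:sharp}, the cap function is rewritten as $1$ plus the maximum of the explicit quadratic $C(\delta,\alpha_i,\beta_j)=\tfrac12\delta(\delta+3)-\sum\tfrac12\alpha_i(\alpha_i+1)-\sum\tfrac12\beta_j(\beta_j+1)$ subject to the linear constraint $\delta c-\sum\alpha_ia_i-\sum\beta_jb_j\leq r$. The key lemma is a shift-invariance statement: if $(\delta,\alpha_i,\beta_j)$ is optimal at level $r$ with the constraint active, then $(\delta+c,\alpha_i+a_i,\beta_j+b_j)$ is optimal at level $r+\lambda$. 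Tight-constrainedness is used exactly to guarantee the constraint is active, so this lemma applies; it yields the first-order difference equation $\on{cap}_{X_\Omega}(r+\lambda(x+1))=\on{cap}_{X_\Omega}(r+\lambda x)+r+\lambda x+L_\Omega-1$, and solving it gives quasipolynomiality, the period $\lambda$, the leading coefficient $\lambda/2=\on{Vol}(\Omega)$, and the linear coefficient $L_\Omega-\on{Vol}(\Omega)-1+r=\tfrac12 L_{\partial\Omega}+r$ all at once. If you want to complete your proposal you need to either prove this shift lemma (or an equivalent statement that the optimal lattice paths at levels $r$ and $r+\lambda$ differ by "adding a copy of $\Omega$"), since nothing short of that establishes the quasipolynomial structure you are assuming at the outset.
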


\begin{thm} (Corollary \ref{cor:main}) \label{thm:2} Suppose $X_\Omega$ is a tightly constrained convex toric domain. Then, if $D_\Omega$ is the balance divisor on $Y_{\Sigma(\Omega)}$ associated to $\Omega$, there is $r_0\in\Z_{\geq0}$ such that for any $r=0,\dots,\lambda-1$ and $x\in\Z_{\geq0}$ with $r+\lambda x\geq r_0$
$$\on{cap}_{X_\Omega}(r+\lambda x)=h^0(Y_{\Sigma(\Omega)},xD_\Omega)+rx+\gamma_r$$
for some constant $\gamma_r\in\Z$ dependent only on $r$.
\end{thm}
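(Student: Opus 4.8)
The plan is to isolate a single lattice-point identity for the cap function and then read it through two dictionaries. Both conclusions assert that, writing the threshold as $s = r + \lambda x$ with $0 \le r < \lambda$ and $s \ge r_0$, one has $\on{cap}_{X_\Omega}(s) = \#(x\Omega \cap \Z^2) + rx + \gamma_r$. Theorem \ref{thm:1} names $\#(x\Omega \cap \Z^2)$ by the Ehrhart (quasi)polynomial $\on{ehr}_\Omega(x)$, while Theorem \ref{thm:2} names it by $h^0(Y_{\Sigma(\Omega)}, xD_\Omega)$; these agree by the standard toric correspondence. Indeed the balance divisor $D_\Omega$ is built so that its polytope is $\Omega$, so the torus-invariant global sections of $\mO_{Y_{\Sigma(\Omega)}}(xD_\Omega)$ are indexed by the characters lying in $x\Omega$, giving $h^0(Y_{\Sigma(\Omega)}, xD_\Omega) = \#(x\Omega \cap \Z^2)$ for all $x \ge 0$ (nefness of $D_\Omega$ supplies the vanishing of higher cohomology but is unnecessary for $h^0$). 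It therefore suffices to prove the lattice-point identity, which is the content of Corollary \ref{cor:main}.

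To prove that identity I would start from the combinatorial formula for the ECH capacities of a convex toric domain recalled above. Through the weight sequence $w(\Omega)$, the capacities $c_k(X_\Omega)$ are computed by a maximisation over ball-packing (equivalently lattice) configurations, and $\on{cap}_{X_\Omega}(s) = \#\{k : c_k(X_\Omega) \le s\}$ rewrites as the number of points of $\Z^2_{\ge 0}$ lying in an explicit region $R_s$ determined by $\Omega$ and the threshold $s$. The first step is to show that, up to a bounded correction and the unimodular change of coordinates built into $\Sigma(\Omega)$, $R_s$ is the dilate $(s/\lambda)\Omega$, where $\lambda = 2\,\on{Area}(\Omega) = D_\Omega^2$ is the normalised volume. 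This value is forced by the ECH volume asymptotic $c_k(X_\Omega)^2 \sim 4k\,\vol{X_\Omega}$ together with $\vol{X_\Omega} = \on{Area}(\Omega)$: it is exactly the scaling making the leading term $\tfrac{s^2}{4\vol{X_\Omega}}$ of $\on{cap}_{X_\Omega}(s)$ agree with the leading term $\on{Area}(\Omega)x^2$ of $\#(x\Omega \cap \Z^2)$ under $s = \lambda x$, and it is a positive integer for a lattice polygon, so is available as a period.

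Next I would promote this leading-order match to the exact quasipolynomial identity by a double discrete-difference analysis in $(r,x)$. Increasing $x$ by one enlarges $R_s$ by an annular shell counted by the first difference of $\on{ehr}_\Omega$; increasing $r$ by one within a fixed period adds a strip along the moving facet of $R_s$ contributing $x + O(1)$ points. Summing these contributions from a base case reproduces the shape $\#(x\Omega \cap \Z^2) + rx + \gamma_r$, with the integer $\gamma_r$ absorbing the boundary and corner defects along the axes and at the vertices of $\Omega$. The force of the \emph{tightly constrained} hypothesis (Definition \ref{def:tight}) is that these defects stabilise: past a threshold $r_0$ the maximisation defining $c_k(X_\Omega)$ is realised by the standard toric configuration, so no sporadic lattice points enter $R_s$ and $\gamma_r$ settles to a constant depending only on the residue $r$. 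This also pins the period down to $\lambda$ exactly, rather than to a proper multiple of it.

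The main obstacle is precisely this stabilisation. Absent control on the weight sequence, the maximum over packings defining $c_k(X_\Omega)$ could be attained by irregular configurations, producing $k$-dependent jumps that would inflate the period or keep $\gamma_r$ from being constant; excluding this is where the hypothesis on the greatest common divisor of the weights enters, verified when some weight equals $1$ in Lemma \ref{lem:wt} and expected in general by Conjecture \ref{conj:wt}. A smaller technical point, needed only for the rational (non-lattice) case of Theorem \ref{thm:2}, is that $D_\Omega$ is then merely $\Q$-Cartier: one must check that the rounding implicit in $\mO_{Y_{\Sigma(\Omega)}}(xD_\Omega)$ leaves $\#(x\Omega \cap \Z^2)$ undisturbed, so that the Hilbert and Ehrhart quasipolynomials coincide for $x \ge 0$. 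With the stabilisation in hand the remaining steps are bookkeeping, and assembling them yields Corollary \ref{cor:main} and hence both theorems.
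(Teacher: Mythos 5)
Your first paragraph is fine and matches the paper: Theorem \ref{thm:2} is obtained from Theorem \ref{thm:1} purely by the standard toric dictionary $h^0(Y_{\Sigma(\Omega)},xD_\Omega)=\#(x\Omega\cap\Z^2)=\on{ehr}_\Omega(x)$, since the polytope of the balance divisor is (a rotation of) $\Omega$. The gap is in the proof of the underlying Ehrhart identity itself. You assert that $\on{cap}_{X_\Omega}(s)$ is the lattice-point count of a region $R_s$ which is ``up to a bounded correction'' the dilate $(s/\lambda)\Omega$, and then run a discrete-difference argument on $R_s$. But $\on{cap}_{X_\Omega}(s)$ is a \emph{maximum} of lattice-point counts over all convex lattice domains of $\Omega$-perimeter at most $s$, not a count in a single canonical region; and for thresholds $s$ not divisible by $\lambda$ the optimiser is genuinely not a dilate of $\Omega$. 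More importantly, ``bounded correction'' to a region of diameter $O(x)$ perturbs the lattice-point count by $O(x)$, so this level of control fixes only the leading term $\on{Vol}(\Omega)x^2$ and leaves the linear coefficient --- which is exactly where the content $\tfrac12 L_{\partial\Omega}+r$ lives --- undetermined. Your ``annular shell counted by the first difference of $\on{ehr}_\Omega$'' and ``strip contributing $x+O(1)$ points'' are restatements of the desired conclusion, not derivations of it.

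What the paper actually proves, and what your sketch is missing, is an exact shift lemma at the level of the weight-sequence optimisation: writing $\on{cap}_{X_\Omega}(r)-1$ as the maximum of the quadratic $C(\delta,\alpha_i,\beta_j)=\tfrac12\delta(\delta+3)-\sum\tfrac12\alpha_i(\alpha_i+1)-\sum\tfrac12\beta_j(\beta_j+1)$ subject to $\delta c-\sum\alpha_ia_i-\sum\beta_jb_j\le r$, one shows by direct computation that adding the weight vector $(c,a_i,b_j)$ to an optimiser for constraint value $r$ produces an optimiser for constraint value $r+\lambda$, where $\lambda=c^2-\sum a_i^2-\sum b_j^2=\ell_\Omega(\partial\Omega)$. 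The tightly constrained hypothesis enters exactly as you suspect --- it guarantees the maximum subject to $\le r$ is attained with equality, so the shift lemma applies --- but the lemma itself yields the \emph{exact} recursion $\on{cap}_{X_\Omega}(r+\lambda(x+1))=\on{cap}_{X_\Omega}(r+\lambda x)+r+\lambda x+L_\Omega-1$, whose solution has linear coefficient $L_\Omega-\on{Vol}(\Omega)-1+r=\tfrac12 L_{\partial\Omega}+r$ by Pick's theorem. Without proving this exact recursion (or an equivalent statement about how the optimal lattice domain grows when the perimeter budget increases by $\lambda$), your argument does not close.
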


Moreover, when at least one weight is equal to $1$ one can choose $r_0=0$ and $\gamma_r=\on{cap}_{X_\Omega}(r)-1$ in both theorems.  Letting the residue $r$ mod $\lambda$ be zero in the formulae above gives the following corollary.

\begin{cor} \label{cor:zero} If $X_\Omega$ is a tightly constrained convex toric lattice domain, then for sufficiently large $x\in\Z_{\geq0}$
$$\on{cap}_{X_\Omega}(\lambda x)=\on{ehr}_\Omega(x)+\gamma_0=h^0(xD_\Omega,Y_{\Sigma(\Omega)})+\gamma_0$$
for some $\gamma_0\in\Z$.
\end{cor}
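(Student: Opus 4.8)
The plan is to obtain this as the $r=0$ specialisation of Theorems \ref{thm:1} and \ref{thm:2} (jointly Corollary \ref{cor:main}). A tightly constrained convex toric lattice domain satisfies the hypotheses of both theorems at once, so I may invoke each of the two displayed formulae simultaneously and simply fix the residue class to be zero. The whole of the analytic and combinatorial content has already been absorbed into those theorems, so the task here is purely to read off what they say in the special case $r\equiv 0\on{mod}\lambda$.

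Setting $r=0$ in the formula of Theorem \ref{thm:1} and noting that the term $rx$ then vanishes gives
\begin{equation*}
\on{cap}_{X_\Omega}(\lambda x)=\on{ehr}_\Omega(x)+\gamma_0,
\end{equation*}
valid whenever $0+\lambda x\geq r_0$, that is, for all $x$ with $\lambda x\geq r_0$; this is precisely what ``sufficiently large $x$'' means and it fixes the threshold explicitly. Performing the identical substitution in Theorem \ref{thm:2} yields
\begin{equation*}
\on{cap}_{X_\Omega}(\lambda x)=h^0(Y_{\Sigma(\Omega)},xD_\Omega)+\gamma_0
\end{equation*}
on the same range of $x$. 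Because both right-hand sides equal the single quantity $\on{cap}_{X_\Omega}(\lambda x)$, the constant $\gamma_0$ appearing in each is forced to be the same integer, and chaining the two identities produces the asserted string of equalities.

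The only point that might seem to need justification is the internal consistency of the two expressions $\on{ehr}_\Omega(x)$ and $h^0(Y_{\Sigma(\Omega)},xD_\Omega)$, but this is the standard toric dictionary rather than a genuine obstacle: by construction the polytope associated to the balance divisor $D_\Omega$ is $\Omega$, so the global sections of $xD_\Omega$ are counted by the lattice points of $x\Omega$, and for the lattice polytope $\Omega$ this count equals $\on{ehr}_\Omega(x)$ once $x$ is large enough for the Hilbert function to agree with its polynomial. Consequently no new estimate is required and there is no hard step to overcome; the corollary is a clean specialisation of Corollary \ref{cor:main} at residue zero, with the cosmetic simplification that the linear-in-$x$ correction term disappears.
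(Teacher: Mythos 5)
Your proof is correct and matches the paper's own route exactly: the paper derives this corollary simply by setting the residue $r$ to zero in Theorems \ref{thm:1} and \ref{thm:2} (equivalently Corollary \ref{cor:main}), which is precisely your specialisation, and the identification of $\on{ehr}_\Omega(x)$ with $h^0(Y_{\Sigma(\Omega)},xD_\Omega)$ is the same toric dictionary ($L_{x\Omega}=h^0(xD_\Omega)$, which in fact holds for all $x\geq 0$) that the paper uses.
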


When at least one weight is equal to $1$ here we have $\gamma_0=0$. The explicit description of the linear coefficients above give precise examples of sub-leading asymptotics for ECH capacities as studied in \cite{cs}; for example, Prop. 16 there is an interesting comparison.
\\

We conjecture that the following strengthening of the prior results holds.

\begin{conjecture} \label{conj:2} Suppose that $X_\Omega$ is a tightly constrained toric domain. Then:
\begin{itemize}
\item there exist convex lattice domains $\Omega_0,\dots,\Omega_{\lambda-1}$ such that, for any $r=0,\dots,\lambda-1$ and any sufficiently large $x\in\Z_{\geq0}$,
$$\on{cap}_{X_\Omega}(r+\lambda x)=|(\Omega_r+x\Omega)\cap\Z^2|$$
\item there exist divisors $D_0,\dots,D_{\lambda-1}$ on $Y_{\Sigma(\Omega)}$ such that, for any $r=0,\dots,\lambda-1$ and any sufficiently large $x\in\Z_{\geq0}$,
$$\on{cap}_{X_\Omega}(r+\lambda x)=h^0(Y_{\Sigma(\Omega)},D_r+xD_\Omega)$$
\end{itemize}
Moreover, we conjecture that $\Omega_0=\{0\}$ so that $\gamma_0=0$, and that all of these claims actually hold for all $x\in\Z_{\geq0}$, not just for all sufficiently large $x$.
\end{conjecture}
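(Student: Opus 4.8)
The plan is to layer a convex-geometric realisation problem on top of the quasipolynomial formula of Theorem~\ref{thm:1}, and then to isolate the genuinely hard ``for all $x$'' strengthening as a separate rigidity statement about the cap function at small arguments. The starting point is the exact identity $\on{cap}_{X_\Omega}(r+\lambda x)=\on{ehr}_\Omega(x)+rx+\gamma_r$ valid for $r+\lambda x\geq r_0$. Since $\on{ehr}_\Omega$ is an honest degree-$2$ polynomial in $x$, the right-hand side is polynomial, and the task for the first bullet is to recognise it as a Minkowski-sum lattice count.

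First I would expand the lattice-point count of a Minkowski sum. For lattice polygons $P,Q\subset\R^2$, Pick's theorem together with the additivity of area and of boundary lattice-length under Minkowski sum gives
$$\abs{(P+xQ)\cap\Z^2}=\on{vol}(Q)\,x^2+\mleft(2V(P,Q)+\tfrac12\,b(Q)\mright)x+\abs{P\cap\Z^2},$$
where $V(P,Q)$ is the normalised mixed area and $b(Q)$ the number of boundary lattice points of $Q$. Taking $Q=\Omega$ and comparing with $\on{ehr}_\Omega(x)+rx+\gamma_r=\on{vol}(\Omega)x^2+(\tfrac12 b(\Omega)+r)x+(1+\gamma_r)$, the quadratic terms agree automatically and matching the remaining coefficients reduces the first bullet (for large $x$) to producing a convex lattice polygon $\Omega_r$ with
$$V(\Omega_r,\Omega)=\tfrac r2\qquad\text{and}\qquad\abs{\Omega_r\cap\Z^2}=1+\gamma_r.$$
The base case $\Omega_0=\{\orig\}$ then satisfies $V=0$ and $\abs{\{\orig\}\cap\Z^2}=1$, forcing $\gamma_0=0$ and recovering Corollary~\ref{cor:zero}.

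The construction step is to realise $\Omega_r$ inside the nef cone of $\Omega$: one seeks $\Omega_r$ whose inner normal fan is refined by $\Sigma(\Omega)$, so that it defines a nef $T$-Cartier divisor $D_r$ on $Y_{\Sigma(\Omega)}$ with polytope $\Omega_r$ and $\Omega_r+x\Omega\leftrightarrow D_r+xD_\Omega$. For such a divisor the toric dictionary gives $h^0(Y_{\Sigma(\Omega)},D_r+xD_\Omega)=\abs{(\Omega_r+x\Omega)\cap\Z^2}$ and $D_r\cdot D_\Omega=2V(\Omega_r,\Omega)=r$, so the second bullet follows from the first and the two realisation problems coincide: find a nef lattice divisor of prescribed degree $D_r\cdot D_\Omega=r$ against $D_\Omega$ and prescribed $h^0(D_r)=1+\gamma_r$.

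I expect the crux to lie in two coupled points. The existence of an honest convex lattice $\Omega_r$ (equivalently a nef $D_r$) realising \emph{both} the mixed area $r/2$ \emph{and} the lattice count $1+\gamma_r$ is a joint Diophantine--convexity constraint; virtual/formal solutions always exist, but convexity and integrality may obstruct, and it is not transparent that the specific pairs $(r,\gamma_r)$ emerging from the weight-sequence combinatorics of $X_\Omega$ are always jointly achievable (note that $D_\Omega$ need only be $\Q$-Cartier, so even forcing $2V(\Omega_r,\Omega)$ to be the integer $r$ is nontrivial). Harder still is the ``moreover'' claim that the identity holds for \emph{all} $x\geq0$: as the Minkowski count is a genuine polynomial on all of $\Z_{\geq0}$, this is equivalent to removing the threshold $r_0$, i.e.\ to showing the cap function has no sub-threshold corrections. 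That rigidity lies beyond the asymptotic quasipolynomiality of Theorem~\ref{thm:1} and would have to be extracted from the Cristofaro-Gardiner--Choi lattice-path formula at small $x$; it is precisely this exact-versus-asymptotic gap that keeps the statement conjectural.
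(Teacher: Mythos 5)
This statement is Conjecture \ref{conj:2} of the paper: the author offers no proof of it, only the supporting evidence of Theorem \ref{thm:1}/Corollary \ref{cor:main} and the remark that it would exhibit $\on{cap}_{X_\Omega}$ as a mixed Ehrhart/Hilbert quasipolynomial in the sense of [HJST]. So there is nothing in the paper to compare your argument against step by step; what I can assess is whether your reduction is sound and whether the gaps you flag are the real ones. On both counts you do well. The Minkowski expansion $\abs{(P+xQ)\cap\Z^2}=\on{area}(Q)x^2+(2V(P,Q)+\tfrac12 b(Q))x+\abs{P\cap\Z^2}$ is correct for lattice polygons (Pick plus additivity of area and boundary lattice length under Minkowski sum), and matching against $\on{ehr}_\Omega(x)+rx+\gamma_r$ does reduce the first bullet, for $x\gg0$, to realising a convex lattice polygon $\Omega_r$ with mixed area $V(\Omega_r,\Omega)=r/2$ and $\abs{\Omega_r\cap\Z^2}=1+\gamma_r$; the case $r=0$ with $\Omega_0=\{\orig\}$ forces $\gamma_0=0$ exactly as the paper conjectures. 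Your identification of the two remaining obstructions --- joint realisability of the pair $(r,\,1+\gamma_r)$ by a single convex lattice polygon, and removal of the threshold $r_0$ so that the identity holds for all $x\geq0$ --- is precisely what keeps the statement conjectural, and the second point is correctly tied to the exact-versus-eventual gap in Theorem \ref{thm:cap}.

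Two corrections of detail. First, in the setting of the conjecture $\Omega$ is a convex \emph{lattice} domain, so $D_\Omega$ is Cartier (the paper notes this explicitly), and $D\cdot D_\Omega\in\Z$ automatically for any lattice divisor $D$; the genuine Diophantine issue is not integrality of $2V(\Omega_r,\Omega)$ but which integers $r$ lie in the semigroup of achievable degrees against $D_\Omega$ --- this is where the tightly constrained hypothesis and Conjecture \ref{conj:wt} enter, since tightly constrained is exactly the statement that every sufficiently large integer is a capacity value. Second, your claim that the two bullets coincide is only valid under the additional hypothesis you quietly impose, namely that the inner normal fan of $\Omega_r$ is refined by $\Sigma(\Omega)$; the first bullet as stated allows $\Omega_r$ to have edge directions not present in $\Omega$, in which case $D_r$ would live on a further blowup rather than on $Y_{\Sigma(\Omega)}$, so the second bullet is a priori the stronger assertion (it implies the first by taking $\Omega_r=P(D_r)$, not conversely). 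With those caveats, your proposal is an honest and correct reduction of the conjecture, not a proof of it --- which is consistent with its status in the paper.
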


These conjectures state that $\on{cap}_{X_\Omega}$ is (eventually) given by a `mixed Ehrhart quasipolynomial' or a `mixed Hilbert quasipolynomial' as studied in \cite{hjst}. Cristofaro-Gardiner--Kleinman in \cite{ck} have previously approached symplectic embeddings problems for ellipsoids via Ehrhart theory, and one can view some aspects of the present paper as pursuing a related philosophy for convex toric lattice domains.
\\

As the results above might suggest, there is a purely algebro-geometric framework that we will establish in which one can recast ECH capacities with no conditions except rationality on the weights of $\Omega$. This also works for a different class of toric domains called \textit{free convex toric domains} that are defined in \S\ref{sec:free}.

\begin{thm} \label{thm:main3} (Theorem \ref{thm:transl} + Theorem \ref{thm:reform} + Theorem \ref{thm:free}) Let $\Omega$ be any rational convex lattice domain or a free rational convex toric domain. Then
\begin{align*}
&c_k(X_\Omega)=\on{min}\{D\cdot D_\Omega:h^0(Y_{\Sigma(\Omega)},D)\geq k+1\} \\
&\on{cap}_{X_\Omega}(r)=\on{max}\{h^0(Y_{\Sigma(\Omega)},D):D\cdot D_\Omega\leq r\}
\end{align*}
where both extrema range over all nef $\Q$- or $\R$-divisors on $Y_{\Sigma(\Omega)}$.
\end{thm}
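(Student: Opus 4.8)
The plan is to prove the three constituent statements in turn, the core being a dictionary between the combinatorial description of the ECH capacities of $X_\Omega$ and the intersection theory of nef divisors on $Y_{\Sigma(\Omega)}$. First I would recall the combinatorial formula of Cristofaro-Gardiner \cite{cg} (building on \cite{ccfhr}) expressing $c_k(X_\Omega)$ as the minimum, over admissible convex lattice paths, of an $\Omega$-dependent action functional, subject to the closed region bounded by the path containing at least $k+1$ lattice points. Theorem \ref{thm:transl} would then match these paths with the moment polytopes $P_D$ of nef divisors $D$ on $Y_{\Sigma(\Omega)}$: because the rays of $\Sigma(\Omega)$ are exactly the inward edge normals of $\Omega$, a divisor is nef precisely when the edges of $P_D$ point in directions occurring among the edges of $\Omega$, which is precisely the admissibility condition on the bounding path. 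Under this correspondence the standard toric identity $h^0(Y_{\Sigma(\Omega)},D)=\#(P_D\cap\Z^2)$ reinterprets the count of enclosed lattice points as a dimension of global sections.

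The crux is Theorem \ref{thm:reform}, asserting that the action of the path bounding $P_D$ equals the intersection number $D\cdot D_\Omega$. This is the property for which the balance divisor was designed: since the polytope of $D_\Omega$ is $\Omega$ itself, the intersection pairing computes the mixed volume $2V(P_D,\Omega)$, which expands as a sum over the edges of $P_D$ of (lattice edge length) times (support of $\Omega$ in the edge normal direction) --- exactly the $\Omega$-weighted perimeter that the ECH action functional measures. I expect the main obstacle to be making this edge-by-edge computation rigorous on the possibly singular surface $Y_{\Sigma(\Omega)}$, reconciling the local multiplicities at the torus-fixed points with the combinatorial action; this is where the definition of $D_\Omega$ must be used in full. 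Granting the identity, $c_k(X_\Omega)=\min\{D\cdot D_\Omega:h^0(Y_{\Sigma(\Omega)},D)\geq k+1\}$ is immediate.

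Two formal points then remain. The equivalence of the $\Q$- and $\R$-divisor versions follows from continuity of $D\mapsto D\cdot D_\Omega$ on the nef cone together with the fact that $\#(P_D\cap\Z^2)$ is locally constant away from the lattice walls, so the extrema over the real nef cone coincide with those over the rational nef cone. The cap-function identity is then pure order-duality: as $c_k$ is nondecreasing, $c_k\leq r$ holds iff some nef $D$ has both $h^0(Y_{\Sigma(\Omega)},D)\geq k+1$ and $D\cdot D_\Omega\leq r$, and therefore $\on{cap}_{X_\Omega}(r)=1+\max\{k:c_k\leq r\}=\max\{h^0(Y_{\Sigma(\Omega)},D):D\cdot D_\Omega\leq r\}$, the extremum again over nef divisors.

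Finally, Theorem \ref{thm:free} extends everything to free convex toric domains. Here I would repeat the argument with the capacity formula and balance divisor appropriate to the free setting of \S\ref{sec:free}, the only changes being to the admissible edge directions and the boundary normalisation; I anticipate no new conceptual difficulty beyond adapting the bookkeeping of Theorems \ref{thm:transl} and \ref{thm:reform} to the modified fan.
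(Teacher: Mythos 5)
Your outline correctly identifies the two genuine ingredients --- the Cristofaro-Gardiner combinatorial formula (Theorem \ref{thm:opt}) and the identity $\ell_\Omega(\partial\Lambda)=D_\Lambda\cdot D_\Omega$ (which in the paper is Lemma \ref{lem:dot}, not Theorem \ref{thm:reform}; the latter is merely the translation from the slope-polytope fan to the inner normal fan) --- and your mixed-volume reading of the balance divisor is a reasonable substitute for the paper's edge-by-edge computation. The order-duality derivation of the cap-function identity also matches the paper.

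However, there is a genuine gap at the heart of your Theorem \ref{thm:transl} step. You assert that nefness of $D$, i.e.\ the edges of $P_D$ pointing in directions occurring among the edges of $\Omega$, ``is precisely the admissibility condition on the bounding path.'' It is not: the minimum in Theorem \ref{thm:opt} ranges over \emph{all} convex lattice paths, with arbitrary rational slopes and lattice vertices, whereas polygons of nef divisors on $Y_{\Sigma(\Omega)}$ have edges parallel to edges of $\Omega$ and are in general only rational polygons, not lattice ones. The two optimisation problems therefore do not range over the same set, and no bijection of the kind you describe exists; indeed this mismatch is exactly why the algebro-geometric formula must use $h^0\geq k+1$ rather than $h^0=k+1$. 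The paper bridges the gap with the $\Omega$-stretching operation $S_\Omega$: given an arbitrary convex lattice path $\Lambda$, the polygon $S_\Omega\Lambda$ has edges parallel to those of $\Omega$, satisfies $\ell_\Omega(\partial S_\Omega\Lambda)=\ell_\Omega(\partial\Lambda)$ (Lemma \ref{lem:stretch}) and contains at least as many lattice points, yielding one inequality; conversely, a compactness argument shows the infimum over nef $\R$-divisors is attained, a perturbation argument shows a minimiser's polygon has a lattice point on every edge (hence is rational and, after translating by the lattice corner where the $-e_1$ and $e_2$ edges meet, properly positioned), and passing to the convex hull of its lattice points produces an honest convex lattice path of the same $\Omega$-length, yielding the other inequality. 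Without this mechanism, or an equivalent one, your argument does not establish either inequality. The same issue recurs in the free case, where Theorem \ref{thm:free_hutch} again ranges over all lattice polygons, so the extension is not mere bookkeeping; it additionally requires the identity $\|v\|_{\Omega'}^*=\ell_{\Omega'}(-v^\perp)$ relating the dual-norm length to the cross-product $\Omega$-length via a $90^\circ$ rotation.
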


For the special case evaluating the cap function at $r\ell_\Omega(\partial\Omega)=:\lambda r$, we have
$$\on{cap}_{X_\Omega}(\lambda r)=\on{max}\{h^0(D):(D-rD_\Omega)\cdot D_\Omega\leq 0\}$$

We can also state some of the results of this paper in purely combinatorial terms. We will later describe a pseudonorm $\ell_\Omega$ dependent on $\Omega$ called the $\Omega$\textit{-length}, which is central to the combinatorialisation of ECH capacities. For a polygon $\Lambda$, define its $\Omega$\textit{-perimeter} $\ell_\Omega(\partial\Lambda)$ to be the sum of the $\Omega$-lengths of the line segments composing its boundary $\partial\Lambda$.

\begin{thm}(Corollary \ref{cor:poly}) Suppose $\Omega$ is a tightly constrained convex lattice domain with lower bound\footnote{For example, these assumptions are met if one of the weights of $\Omega$ is equal to $1$, and we conjecture that they are met whenever the gcd of the weights is $1$.} $r_0=0$ and let $\lambda=\ell_\Omega(\partial\Omega)$. Then $r\Omega$ contains the most lattice points of any convex lattice domain of $\Omega$-perimeter at most $r\lambda$ for all $r\in\Z_{\geq0}$.
\end{thm}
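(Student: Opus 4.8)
The plan is to read off the statement from two facts about the cap function at the special value $\lambda r$. The first is the combinatorial description of $\on{cap}_{X_\Omega}$ coming from Theorem \ref{thm:main3} together with the standard toric dictionary: for a nef divisor $D$ on $Y_{\Sigma(\Omega)}$ with associated polytope $P_D$ one has $h^0(Y_{\Sigma(\Omega)},D)=|P_D\cap\Z^2|$, while the defining property of the $\Omega$-length identifies the intersection number $D\cdot D_\Omega$ with the $\Omega$-perimeter $\ell_\Omega(\partial P_D)$. Feeding these into Theorem \ref{thm:main3} gives
$$\on{cap}_{X_\Omega}(L)=\on{max}\{|P\cap\Z^2|:P\text{ a convex lattice domain with }\ell_\Omega(\partial P)\leq L\}.$$
The second fact is Corollary \ref{cor:zero}: under the standing hypothesis $r_0=0$ it yields $\on{cap}_{X_\Omega}(\lambda r)=\on{ehr}_\Omega(r)+\gamma_0$ for all $r\in\Z_{\geq0}$, and here the constant vanishes, since one may take $\gamma_0=\on{cap}_{X_\Omega}(0)-1$ and $\on{cap}_{X_\Omega}(0)=\#\{k:c_k(X_\Omega)\leq0\}=1$. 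Thus $\on{cap}_{X_\Omega}(\lambda r)=\on{ehr}_\Omega(r)=|r\Omega\cap\Z^2|$.

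With these in hand the assembly is immediate. Because $\ell_\Omega$ is a pseudonorm it is positively homogeneous of degree one, and the edges of $r\Omega$ are $r$ times those of $\Omega$, so $\ell_\Omega(\partial(r\Omega))=r\,\ell_\Omega(\partial\Omega)=r\lambda$; hence $r\Omega$ is itself admissible in the maximisation with $L=r\lambda$. The first fact then says the largest lattice count of any convex lattice domain of $\Omega$-perimeter at most $r\lambda$ equals $\on{cap}_{X_\Omega}(\lambda r)$, while the second says this number is exactly $|r\Omega\cap\Z^2|$, attained by $r\Omega$. Therefore $r\Omega$ realises the maximum, which is the assertion.

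The step I expect to be the real obstacle is justifying that the maximisation in the first fact ranges over all convex lattice domains, and not only over those whose normal fan coarsens $\Sigma(\Omega)$, i.e. the polytopes of nef divisors on $Y_{\Sigma(\Omega)}$. Since $r\Omega$ lies in this restricted class, what must be shown is that no domain $\Lambda$ of another combinatorial type and $\Omega$-perimeter at most $r\lambda$ can carry more lattice points. I would treat an arbitrary such $\Lambda$ by passing to the common refinement $\Sigma'$ of $\Sigma(\Omega)$ and the normal fan of $\Lambda$, with toric morphism $\pi\colon Y_{\Sigma'}\to Y_{\Sigma(\Omega)}$: on $Y_{\Sigma'}$ the polytope $\Lambda$ is the polytope of a nef divisor $D_\Lambda$ with $h^0(D_\Lambda)=|\Lambda\cap\Z^2|$, the pullback $\pi^\ast D_\Omega$ again has polytope $\Omega$, and the projection formula gives $D_\Lambda\cdot\pi^\ast D_\Omega=\ell_\Omega(\partial\Lambda)$, which reduces the required bound to the statement already available on $Y_{\Sigma(\Omega)}$. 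The continuous shadow of this extremal problem is Minkowski's first inequality, whose equality case forces homothety with $\Omega$ and thereby already distinguishes dilates of $\Omega$; the ECH--Ehrhart identity above is what upgrades this to the exact lattice-point statement.
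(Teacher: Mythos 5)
There is a genuine gap in your second input. You claim that, because $r_0=0$, Corollary \ref{cor:zero} gives $\on{cap}_{X_\Omega}(\lambda r)=\on{ehr}_\Omega(r)+\gamma_0$ \emph{for all} $r\in\Z_{\geq0}$ and that one may take $\gamma_0=\on{cap}_{X_\Omega}(0)-1=0$. Neither assertion is available: Corollary \ref{cor:zero} is stated only for sufficiently large $x$, and the identification $\gamma_r=\on{cap}_{X_\Omega}(r)-1$ is exactly the point the paper flags as only a belief in general, proved only when some weight equals $1$ (where one can take $x_0=0$ in Theorem \ref{thm:cap}). Since the statement you are proving is an exact claim for every $r\geq0$, the asymptotic identity with an undetermined constant cannot deliver it; your argument silently assumes the hard part. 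The paper's proof avoids this entirely by working with the optimisation problem for $C(\delta,\alpha_i,\beta_j)$: the all-zero tuple attains $\on{cap}_{X_\Omega}(0)=1$, the recursion lemma preceding Definition \ref{def:tight} shows that adding $(c,a_i,b_j)$ preserves optimality for the equality-constrained problem at level $r+\lambda$, and the hypothesis that $\Omega$ is tightly constrained with lower bound $0$ converts the inequality-constrained maximum into the equality-constrained one at every level. Iterating yields $\on{cap}_{X_\Omega}(r\lambda)=1+C(rc,ra_i,rb_j)=L_{r\Omega}$ exactly, for all $r$, which is the assertion once combined with the combinatorial formula for the cap function.

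Your first input, by contrast, is correct but derived the wrong way round. The identity $\on{cap}_{X_\Omega}(L)=\on{max}\{L_\Lambda:\ell_\Omega(\partial\Lambda)\leq L\}$ over \emph{all} convex lattice domains is the Corollary to Theorem \ref{thm:opt}, a direct consequence of Cristofaro-Gardiner's combinatorial formula; the toric reformulation in Theorem \ref{thm:main3} is deduced from it (via $\Omega$-stretching), not the other way around. So the "real obstacle" you identify — that nef divisors on $Y_{\Sigma(\Omega)}$ only see polytopes whose normal fans coarsen $\Sigma(\Omega)$ — dissolves if you cite the combinatorial corollary directly, and the common-refinement and Minkowski-inequality apparatus is unnecessary.
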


All of these results are numerical in nature and so it is natural to wonder if there is some higher structure behind them. In particular, both sides of the equality in Theorem \ref{thm:2} are defined to be dimensions of vector spaces - of filtered embedded contact homology on the left, and of cohomology of divisors on $Y_{\Sigma(\Omega)}$ on the right - and so it would be interesting to explore whether there is a correspondence on the level of vector spaces, potentially accessed by mirror symmetry.

\subsection{Acknowledgements}

I am very grateful to Michael Hutchings for introducing me to ECH capacities and for many fruitful discussions as the content of this paper developed. I would also like to thank Vivek Shende for useful conversations at various points of this story, and Dan Cristofaro-Gardiner for comments on a draft of this paper.

\section{ECH capacities}

ECH is formally defined in terms of contact geometry. It is constructed explicitly in \cite{h11} however there is a combinatorial rephrasing of ECH in the case of toric domains that is most applicable to the situation at hand, which is how we will primarily present it here. This material comes from \cite{h11}, \cite{ccfhr}, and \cite{cg}.

\subsection{Combinatorial definitions} \label{sec:comb}

Suppose $\Omega\subset\R^2$ is any polygon. Define the $\Omega$\textit{-length} of a vector $v$ to be
$$\ell_\Omega(v):=v\times p_v$$
where $p_v$ is a boundary point of $\Omega$ such that $\Omega$ is contained in the right halfplane bounded by the line spanned by $v$ translated to contain $p_v$. Here $\times$ means the cross product $u\times v=\on{det}(u\mid v)$. Define the $\Omega$-length of a piecewise linear path $\Lambda$ to be
$$\ell_\Omega(\Lambda)=\sum\ell_\Omega(v_i)$$
where the sum ranges over the edge vectors $v_i$ of $\Lambda$. Notice that, from a local calculation, one has
$$\ell_\Omega(\partial\Omega)=2\on{Vol}(\Omega)$$

\begin{definition} A \textit{convex domain} is a convex region $\Omega\subset\R^2$ whose boundary consists of
\begin{itemize}
\item a line segment between the origin and a point $(a,0)$ on the positive horizontal axis
\item a line segment between the origin and a point $(0,b)$ on the positive vertical axis
\item the graph of a convex piecewise linear function $f:[0,a]\to[0,b]$
\end{itemize}
We say that a convex domain is a \textit{convex lattice domain} if the points $(a,0)$ and $(0,b)$ are lattice points and if the function $f$ is piecewise linear such that each vertex is a lattice point. In other words, a convex lattice domain is a convex domain that is also a lattice polygon. \textit{Convex rational domains} are defined similarly.
\end{definition}

We call the corresponding symplectic manifold $X_\Omega=\mu^{-1}(\Omega)$ a \textit{convex toric domain} if $\Omega$ is a convex domain, or a \textit{convex toric lattice domain} if $\Omega$ is a convex lattice domain. One can also repeat these definitions with convex replaced by \textit{concave}.
\\

Following \cite{cg} - which built on \cite{ccfhr} and \cite{mcd} - the \textit{weight sequence} associated to a convex lattice domain $\Omega$ is a sequence $w(\Omega)$ of numbers defined as follows. Let $\Delta_a$ be the convex hull of the points $(0,0),(a,0),(0,a)$. Let $c$ be the smallest number such that $\Omega\subset\Delta_c$. Equivalently, $c$ is the radius of the smallest ball in $\C^2$ containing $X_\Omega$. The two components of the complement $\Delta_c\setminus\Omega$ are affine equivalent to two concave domains $\Omega_2$ and $\Omega_3$. There is a recursive definition weight sequences for concave domains as follows. Consider the concave domain $\Omega_2$. Let $b_1$ be the largest real number such that $\Delta_{b_1}\subset \Omega_2$. The complement of $\Delta_{b_1}$ in $\Omega_2$ consists of two (possibly empty) concave domains and so one can recurse to obtain a multiset of numbers $w(\Omega_2):=\{b_1,b_2,\dots\}$. We define
$$w(\Omega):=(c;w(\Omega_2);w(\Omega_3))$$

\begin{example} Let $\Omega=\on{Conv}((0,0),(0,2a),(a,a),(a,0))$ for some $a\in\Z_{>0}$.
\begin{figure}[h]
\caption{Example of weight sequence}
\begin{center}
\begin{tikzpicture}[scale=0.8]
\node (o) at (0,0){$\bullet$};
\node (a) at (0,4){$\bullet$};
\node (b) at (2,2){$\bullet$};
\node (c) at (2,0){$\bullet$};

\draw (o.center) to (a.center) to (b.center) to (c.center) to (o.center);

\node (o) at (4,0){$\bullet$};
\node (a) at (4,4){$\bullet$};
\node (b) at (6,2){$\bullet$};
\node (c) at (6,0){$\bullet$};
\node (d) at (8,0){};

\draw (o.center) to (a.center);
\draw (b.center) to (c.center) to (o.center);
\draw[dashed] (a.center) to (d.center) to (c.center);

\node (b) at (10,2){$\bullet$};
\node (c) at (10,0){$\bullet$};
\node (d) at (12,0){$\bullet$};

\draw (b.center) to (d.center) to (c.center) to (b.center);
\end{tikzpicture}
\end{center}
\end{figure}
Here $c=2a$, leaving a single concave region $\Omega_3$ illustrated in the third figure, which is affine equivalent to $\Delta_a$. The weight sequence for $\Omega$ is hence $(2a;\emptyset;a)$.
\end{example}

\subsection{ECH capacities}

Using the constructions above, we define ECH capacities combinatorially.

\begin{definition} \label{def:path} A \textit{convex lattice path} is a piecewise linear path starting on the positive vertical axis and ending on the positive horizontal axis such that its vertices are lattice points.
\end{definition}

After adding the pieces along the coordinate axes, convex lattice paths are exactly boundaries of convex lattice domains. For a polygon $\Lambda$ we denote by $L_\Lambda$ the number of lattice points enclosed by $\Lambda$, including on those its boundary. We state a result of Cristofaro-Gardiner but using the perspective of convex lattice domains instead of convex lattice paths.

\begin{thm}[\cite{cg}, Cor. 8.5] \label{thm:opt} Let $\Omega$ be a convex domain. Then
$$c_k(X_\Omega)=\on{min}\{\ell_\Omega(\partial\Lambda):L_\Lambda=k+1\}$$
where the minimum is taken over convex lattice domains $\Lambda$.
\end{thm}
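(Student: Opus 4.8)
The plan is to route the computation through Hutchings's combinatorial model of embedded contact homology for toric boundaries, which converts the Floer-theoretic definition of $c_k$ into an optimization over a discrete set of generators. For the contact manifold $\partial X_\Omega$ the ECH chain complex is generated by finite collections of Reeb orbits, and over a toric boundary these generators are encoded by lattice paths in the first quadrant. Each generator $\alpha$ carries two invariants: its \emph{symplectic action}, the total action of the underlying Reeb orbits, and its \emph{ECH index} $I(\alpha)$, an integer controlling the grading and the behaviour of the $U$-map. The capacity $c_k(X_\Omega)$ is then recovered as the minimal action among the generators that are essential in homology and sit in the degree-$2k$ slot of the distinguished $U$-tower. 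Since a general convex domain has corners (a degenerate contact form), I would first invoke the monotonicity and continuity of $c_k$ under inclusions to reduce to smooth, or rational, $\Omega$ and pass to the limit at the end.

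The first technical step is to build the dictionary between these analytic invariants and the combinatorial data of a path $\Lambda$. On the action side, the Reeb orbits sitting over an edge vector $v$ of $\partial\Lambda$ contribute exactly $\ell_\Omega(v)$, essentially because the $\Omega$-length is defined as the cross product $v\times p_v$ measuring the supporting-line distance to $\Omega$; summing over the edges of $\partial\Lambda$ shows the generator attached to $\Lambda$ has total action $\ell_\Omega(\partial\Lambda)$. On the index side, I would evaluate Hutchings's ECH index formula — the combination of relative Chern, relative self-intersection, and Conley–Zehnder terms — on the toric generator attached to $\Lambda$. For a \emph{convex} lattice path this expression collapses, via a Pick-type bookkeeping of the lattice points enclosed by the associated convex lattice domain, to the clean identity $I = 2(L_\Lambda - 1)$. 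In particular the index condition $I = 2k$ becomes precisely the lattice-point condition $L_\Lambda = k+1$.

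With the dictionary in hand, the asserted equality $c_k(X_\Omega)=\on{min}\{\ell_\Omega(\partial\Lambda):L_\Lambda=k+1\}$ reduces to showing that the action-minimizing essential generator of index $2k$ may always be taken convex. The structural input is that for toric domains the interaction of the differential and the $U$-map singles out the convex generators as exactly those carrying the $U$-tower; consequently the minimization over all generators collapses to a minimization over convex lattice domains, where the clean index identity of the previous paragraph applies. As an independent sanity check one could instead run the weight-sequence machinery: decompose $\Omega$ against the triangles $\Delta_c$, reduce to disjoint unions of balls, and combine the ellipsoid formula $c_k(E(a,b))$ with the max-over-partitions law for disjoint unions, verifying that the resulting arithmetic matches the lattice-path optimization.

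The main obstacle is this last reduction together with the precise index evaluation. Establishing $I = 2(L_\Lambda - 1)$ for convex generators requires a careful, case-checked computation of the Conley–Zehnder and self-intersection contributions of the toric Reeb orbits and the identification of their total with the enclosed lattice count, which is where the geometry of $\Omega$ enters most delicately. More seriously, showing that convex generators actually \emph{compute} the spectrum — rather than merely furnishing an upper bound for $c_k$ — demands genuine control of the $U$-map on the combinatorial complex, not just the generator-by-generator index–action dictionary. This is the heart of the argument, and it is the content I would import from \cite{cg}, building on \cite{h11} and \cite{ccfhr}.
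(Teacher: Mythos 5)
The paper does not prove Theorem \ref{thm:opt} at all: it is imported verbatim from \cite{cg}, Cor.~8.5 (merely rephrased from convex lattice paths to convex lattice domains), so there is no internal proof to measure your attempt against. Judged on its own terms, your outline names the right ingredients, but it is a proof sketch whose load-bearing steps are explicitly deferred to the very reference being proved --- the identity $I=2(L_\Lambda-1)$ for convex generators, and above all the claim that the convex generators carry the $U$-tower and hence compute the spectrum. You correctly flag that last point as ``the heart of the argument,'' but flagging it is not supplying it, so as a standalone proof this has a genuine gap; as an account of the cited result it is acceptable and is essentially what the paper itself does by citing \cite{cg}.

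One substantive comment on the route: your primary strategy (direct combinatorial ECH over $\partial X_\Omega$, with the action/index dictionary and control of the $U$-map) is the mechanism of \cite{h11} and of the \emph{concave} case in \cite{ccfhr}; it does not apply directly here because $\partial X_\Omega$ for a polygonal $\Omega$ is not a smooth contact hypersurface of the type for which that chain complex is set up, and smoothing $\Omega$ destroys the exact identification of generators with lattice paths. The argument actually used in \cite{cg} for the convex case is the one you relegate to a ``sanity check'': decompose $\Omega$ via its weight sequence into a ball minus concave pieces, apply the known formulas for balls and concave domains together with the disjoint-union and monotonicity axioms (this is exactly Lemma \ref{lem:sharp} of the present paper), and match the resulting arithmetic optimization with the lattice-path minimum. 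If you want a proof that could in principle be completed from the tools available in this paper's references, you should promote that second route to the main line of argument and demote the direct ECH computation to motivation.
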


\begin{cor} If $\Omega$ is a convex domain, then
$$\on{cap}_{X_\Omega}(r)=\on{max}\{L_\Lambda:\ell_\Omega(\partial\Lambda)\leq r\}$$
where the maximum ranges over convex lattice domains $\Lambda$.
\end{cor}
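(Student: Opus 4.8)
The plan is to deduce the Corollary directly from Theorem \ref{thm:opt} by a duality argument that exchanges the roles of the constraint and the objective, using only the monotonicity of the sequence $c_k(X_\Omega)$ and the definition of the cap function. Throughout I write $N(r):=\on{max}\{L_\Lambda:\ell_\Omega(\partial\Lambda)\leq r\}$ for the right-hand side, the maximum being over convex lattice domains $\Lambda$, and I set $K:=\on{max}\{k\in\Z_{\geq0}:c_k(X_\Omega)\leq r\}$, so that by definition $\on{cap}_{X_\Omega}(r)=K+1$.

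First I would prove $N(r)\geq\on{cap}_{X_\Omega}(r)$. Since $c_K(X_\Omega)\leq r$, Theorem \ref{thm:opt} provides a convex lattice domain $\Lambda$ with $L_\Lambda=K+1$ and $\ell_\Omega(\partial\Lambda)=c_K(X_\Omega)\leq r$ (the minimum defining $c_K$ is attained; see the finiteness remark below). This $\Lambda$ is then admissible for the maximum defining $N(r)$, so $N(r)\geq L_\Lambda=K+1=\on{cap}_{X_\Omega}(r)$.

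For the reverse inequality $N(r)\leq\on{cap}_{X_\Omega}(r)$, take any convex lattice domain $\Lambda$ with $\ell_\Omega(\partial\Lambda)\leq r$ and put $m:=L_\Lambda\geq1$. Then $\Lambda$ is a competitor in the minimum of Theorem \ref{thm:opt} computing $c_{m-1}(X_\Omega)$, whence $c_{m-1}(X_\Omega)\leq\ell_\Omega(\partial\Lambda)\leq r$. By maximality of $K$ this forces $m-1\leq K$, i.e. $L_\Lambda=m\leq K+1=\on{cap}_{X_\Omega}(r)$. Taking the maximum over all such $\Lambda$ gives $N(r)\leq\on{cap}_{X_\Omega}(r)$, and combining the two inequalities proves the claim.

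The one point needing care — and the main, if modest, obstacle — is finiteness: that $K$ is finite and that the extrema are attained. For the former I would use that the ECH capacities of a fixed compact domain satisfy $c_k(X_\Omega)\to\infty$ as $k\to\infty$ (equivalently, by Theorem \ref{thm:opt}, a convex lattice domain enclosing $k+1$ lattice points has $\Omega$-perimeter tending to infinity, since $\Omega$ is a fixed full-dimensional bounded domain), so that only finitely many $c_k$ lie below $r$ and $K<\infty$; the minimum defining $c_K$ is then realised by some $\Lambda$. Given this, the reverse inequality already shows every admissible $\Lambda$ satisfies $L_\Lambda\leq K+1$, so $N(r)$ is a supremum of integers bounded above by $K+1$ and below by $1$ (realised by the one-point domain), hence finite and attained. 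With these observations in hand the two displayed inequalities combine to give $\on{cap}_{X_\Omega}(r)=N(r)$, and everything else is the formal monotonicity bookkeeping built into the definition of the cap function.
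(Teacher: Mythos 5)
Your proof is correct and follows essentially the same route as the paper's: both arguments just combine Theorem \ref{thm:opt} with the definition of the cap function and monotonicity of $c_k$, the paper doing it as a one-line chain of equalities and you splitting it into two inequalities with a (welcome) extra check that the extrema are finite and attained.
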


\begin{proof} After including the zeroth capacity, one has
\begin{align*}
\on{cap}_{X_\Omega}(r)&=\#\{k:\text{$\exists\Lambda$ with $\ell_\Omega(\Lambda)\leq r$ and $L_\Lambda=k+1$}\} \\
&=1+\on{max}\{k:\text{$\exists\Lambda$ with $\ell_\Omega(\Lambda)\leq r$ and $L_\Lambda=k+1$}\} \\
&=\on{max}\{L_\Lambda:\ell_\Omega(\Lambda)\leq r\}
\end{align*}
as required. 
\end{proof}

\subsection{ECH capacities and weight sequences}

The weight sequence $w(\Omega)$ contains all the information required to compute $c_k(X_\Omega)$.

\begin{lemma} \label{lem:sharp} Suppose $w(\Omega)=(c;a_1,\dots,a_s;b_1,\dots,b_t)$. Then
$$c_k(X_\Omega)=\on{min}\{c_{k+k_2+k_3}(B(c))-c_{k_2}(\amalg_{i=1}^sB(a_i))-c_{k_3}(\amalg_{j=1}^tB(b_j)):k_2,k_3\in\Z_{\geq0}\}$$
\end{lemma}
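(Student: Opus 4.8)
The plan is to use the characterisation of ECH capacities for disjoint unions of balls in order to reduce the claimed formula to a statement about how the weight sequence decomposition interacts with the $\Omega$-length optimisation of Theorem~\ref{thm:opt}. Recall first that a ball $B(c)$ is itself a convex toric domain $X_{\Delta_c}$, and that for disjoint unions one has the standard relation $c_k(\amalg_i X_i)=\max\{\sum_i c_{k_i}(X_i):\sum_i k_i=k\}$. The geometric content of the weight sequence is that $\Delta_c$ decomposes, up to affine equivalence and up to lower-dimensional overlaps, as $\Omega$ together with the two concave pieces $\Omega_2\cong\amalg_i B(a_i)$ and $\Omega_3\cong\amalg_j B(b_j)$ filling out the complement $\Delta_c\setminus\Omega$. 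The formula to be proved is exactly the numerical shadow of the set-theoretic identity $\Delta_c=\Omega\cup\Omega_2\cup\Omega_3$.

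First I would set up the additivity of the optimisation problem across this decomposition. By Theorem~\ref{thm:opt}, each capacity on the right-hand side is a minimum of $\Omega'$-perimeters over convex lattice domains enclosing a prescribed number of lattice points, where $\Omega'$ ranges over $\Delta_c$ and the relevant ball-unions. The key observation is that the $\Omega$-length $\ell_\Omega$ is built from the cross product against boundary points of $\Omega$, and the $\Delta_c$-length $\ell_{\Delta_c}$ decomposes along the three boundary regions of $\Delta_c$ corresponding to the edge of $\Omega$ and the two concave complementary regions. Concretely, for a convex lattice path $\Lambda$ one can relate $\ell_{\Delta_c}(\partial\Lambda)$ to $\ell_\Omega(\partial\Lambda)$ plus correction terms measured by the $\Omega_2$- and $\Omega_3$-lengths, because the supporting boundary point $p_v$ used to define $\ell_\Omega(v)$ agrees with that for $\ell_{\Delta_c}(v)$ except where the outward direction $v$ points into one of the concave complementary regions.

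The main engine of the proof is therefore a careful bookkeeping of lattice-point counts and $\Omega$-lengths under the partition of directions $v$ into those controlled by $\Omega$ and those controlled by $\Omega_2$ or $\Omega_3$. I would argue that for the optimal $\Lambda$ computing $c_k(X_\Omega)$, one can match its boundary data against optimal configurations for $B(c)$, $\amalg_i B(a_i)$, and $\amalg_j B(b_j)$ so that the lattice-point indices satisfy $k+1=(k+k_2+k_3+1)-(k_2)-(k_3)$ after accounting for the lattice points lying in the concave pieces, and so that the $\Omega$-length equals the prescribed difference of ball capacities. This gives the inequality $c_k(X_\Omega)\leq \min\{\cdots\}$ by exhibiting, for each choice of $k_2,k_3$, a valid $\Lambda$; the reverse inequality follows by taking the $\Lambda$ realising the minimum on the right and showing its restriction to $\Omega$ is admissible for the left.

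The hard part will be the reverse inequality and, more precisely, controlling the interaction at the seams where $\Omega$ meets the concave regions $\Omega_2,\Omega_3$ inside $\Delta_c$. The subtlety is that a convex lattice domain $\Lambda$ optimal for one of the ball-unions need not glue cleanly onto an optimal $\Lambda'$ for $\Omega$ to produce a globally convex lattice path for $\Delta_c$, and lattice points on the shared boundary segments can be double-counted. I expect the cleanest route is to invoke the known additivity result expressing $c_k$ of a convex domain through the capacities of the ball packing furnished by its weight sequence (the content of \cite{ccfhr} and \cite{cg}), and then to algebraically rearrange that packing identity into the stated min-over-$(k_2,k_3)$ form by solving for $c_k(X_\Omega)$ in terms of $c_\bullet(B(c))$ and subtracting the contributions of $\Omega_2$ and $\Omega_3$. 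Verifying that every seam term cancels exactly, with no residual boundary lattice-point discrepancy, is where the calculation must be done with care.
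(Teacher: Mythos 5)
The paper offers no independent proof of this lemma: it is disposed of in one line as the combination of \cite{cg} Corollary A.5 (which expresses $c_k(X_\Omega)$ as a minimum of $c_{k+k'}(B(c))$ minus the capacities of the concave complementary regions of $\Delta_c\setminus\Omega$) with \cite{ccfhr} Theorem 1.4 (which identifies the capacities of a concave toric domain with those of the disjoint union of balls given by its weight expansion). Your final paragraph lands on exactly this, so the operative part of your proposal coincides with the paper's route. The direct lattice-path argument you sketch in the preceding paragraphs, however, should not be presented as if it were a near-complete alternative: the difficulties you flag at the ``seams'' are genuine and are essentially the entire content of the cited theorems rather than a bookkeeping exercise. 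In particular, an optimal convex lattice path for $\amalg_i B(a_i)$ is a tuple of paths for separate triangles and does not naturally glue onto a path for $\Omega$ to yield a convex path for $\Delta_c$; the comparison of $\ell_{\Delta_c}(\partial\Lambda)$ with $\ell_\Omega(\partial\Lambda)$ is not governed edge-by-edge by which concave region the direction ``points into''; and the index identity $k+1=(k+k_2+k_3+1)-k_2-k_3$ does not by itself account for lattice points shared along the common boundaries. If you intend to give a self-contained proof you would need to reprove the Traynor-trick/ball-packing decomposition underlying \cite{ccfhr}; if you intend to cite, the two references you name suffice and match the paper exactly.
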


This follows from \cite{cg} Corollary A.5 combined with \cite{ccfhr} Theorem 1.4.

\subsection{Key properties of ECH capacities} \label{sec:prop}

ECH capacities have the following properties recorded in \cite{ccfhr}, which we will use throughout the paper:
\begin{itemize}
\item \textbf{Monotonicity:} If $(X,\omega)$ embeds into $(X',\omega')$ then $c_k(X,\omega)\leq c_k(X',\omega')$ for all $k$
\item \textbf{Disjoint union:} If $(X,\omega)=\amalg_{i=1}^n(X_i,\omega_i)$ then
$$c_k(X,\omega)=\underset{\sum k_i=k}{\on{max}}\sum_{i=1}^nc_{k_i}(X_i,\omega_i)\text{ for all $k$}$$
\item \textbf{Conformality:} For each $k$ and $\lambda\in\R^+$, $c_k(X,\lambda\omega)=\lambda c_k(X,\omega)$
\end{itemize}

\subsection{Asymptotics of ECH capacities}

Asymptotically, capacities return the volume constraint on symplectic embeddings.

\begin{thm}[\cite{chr}, Theorem 1.1] \label{thm:asy} Suppose $\Omega$ is a convex domain, then
$$\underset{k\to\infty}{\on{lim}}\frac{c_k(X)^2}{k}=4\on{Vol}(X_\Omega)=4\on{Vol}(\Omega)$$
\end{thm}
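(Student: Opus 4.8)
```latex
The plan is to prove the asymptotic $\lim_{k\to\infty} c_k(X_\Omega)^2/k = 4\Vol{\Omega}$ by exploiting the combinatorial characterisation of ECH capacities furnished by Theorem \ref{thm:opt}, namely $c_k(X_\Omega) = \min{\ell_\Omega(\bdry{\Lambda}) : L_\Lambda = k+1}$ over convex lattice domains $\Lambda$. The heuristic driving the whole argument is that minimising the $\Omega$-perimeter subject to enclosing a fixed number of lattice points should, in the large-$k$ limit, mimic the \emph{continuous} isoperimetric-type problem of minimising $\Omega$-perimeter subject to a fixed area, and the extremal shape for that continuous problem is a scaled copy of $\Omega$ itself. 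Indeed, recall the local identity recorded above, $\ell_\Omega(\bdry{\Omega}) = 2\Vol{\Omega}$, which by the conformality/scaling of the $\Omega$-length gives $\ell_\Omega(\bdry(t\Omega)) = 2t\Vol{\Omega}$ for a dilate $t\Omega$, while $\Vol{t\Omega} = t^2\Vol{\Omega}$. Eliminating $t$ produces exactly the relation $\ell_\Omega(\bdry(t\Omega))^2 = 4\Vol{\Omega}\cdot\Vol{t\Omega}$, which is the source of the constant $4\Vol{\Omega}$ in the statement.

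First I would establish the \emph{upper bound} on the limit, which is the easy direction because it only requires exhibiting good competitor domains. For each $k$, take $\Lambda$ to be (a lattice approximation of) the dilate $t_k\Omega$ with $t_k$ chosen so that $t_k\Omega$ encloses roughly $k+1$ lattice points; by the standard leading-term estimate for lattice-point counts, $L_{t_k\Omega} \approx \Vol{\Omega}\,t_k^2$, so $t_k \approx \sqrt{(k+1)/\Vol{\Omega}}$. Since $c_{k}(X_\Omega)$ is a minimum over all competitors, it is bounded above by $\ell_\Omega(\bdry(t_k\Omega)) = 2t_k\Vol{\Omega} \approx 2\sqrt{(k+1)\Vol{\Omega}}$, whence $\limsup_k c_k^2/k \leq 4\Vol{\Omega}$. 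The only technical care here is that $t_k\Omega$ is not literally a lattice domain, so one must replace it by an honest convex lattice domain enclosing the right number of points at the cost of lower-order perimeter error; since both $\ell_\Omega(\bdry\cdot)$ and $L_{(\cdot)}$ are being compared at scale $\sqrt{k}$, these rounding corrections are $o(\sqrt{k})$ and do not affect the limit.

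The harder direction, and what I expect to be the main obstacle, is the matching \emph{lower bound} $\liminf_k c_k^2/k \geq 4\Vol{\Omega}$: here one must show that \emph{no} convex lattice domain can enclose $k+1$ lattice points while having substantially smaller $\Omega$-perimeter than the scaled copy of $\Omega$. The natural route is a discrete isoperimetric inequality: for any convex lattice domain $\Lambda$ one wants $\ell_\Omega(\bdry{\Lambda})^2 \geq 4\Vol{\Omega}\cdot\Vol{\Lambda} + (\text{error})$, and then to relate $\Vol{\Lambda}$ to the enclosed lattice-point count $L_\Lambda = k+1$ via a Pick-type or leading-term estimate $\Vol{\Lambda} \geq L_\Lambda - o(L_\Lambda)$ uniformly over the relevant family. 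The continuous isoperimetric statement $\ell_\Omega(\bdry{\Lambda})^2 \geq 4\Vol{\Omega}\Vol{\Lambda}$ is a Minkowski-type mixed-volume inequality (the $\Omega$-length is, up to the cross-product convention, the mixed-area functional pairing $\Lambda$ against the dual of $\Omega$), and its sharp case is precisely $\Lambda$ homothetic to $\Omega$; establishing it rigorously in the convex-domain setting with the boundary-and-axes conventions in play is the technical crux. The remaining delicacy is uniformity: one needs the error terms in both the isoperimetric inequality and the volume-versus-lattice-count comparison to be controlled \emph{simultaneously} as $\Lambda$ ranges over minimisers, so that neither long thin domains nor domains with many short boundary edges can cheat the bound. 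Combining the two bounds then pins the limit to exactly $4\Vol{\Omega}$, and since $\Vol{X_\Omega} = \Vol{\Omega}$ for toric domains under the induced form, this is the claimed statement.
```
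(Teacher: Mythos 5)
The paper does not prove this statement at all: it is imported verbatim from Cristofaro-Gardiner--Hutchings--Ramos \cite{chr}, whose argument works for general Liouville domains and goes through ECH index and holomorphic curve machinery rather than anything combinatorial. Your proposal is therefore a genuinely different (and, for this special case, more elementary) route: you specialise to convex toric domains, invoke the combinatorial formula of Theorem \ref{thm:opt}, and reduce the asymptotic to a discrete isoperimetric problem. The strategy is sound. The upper bound via dilates $t_k\Omega$ is routine, and your ``technical crux'' for the lower bound is exactly Minkowski's first inequality for mixed areas: since $\ell_\Omega(\partial\Lambda)=\sum_i h_\Omega(v_i^\perp)=2A(\Lambda,\Omega)$, where $h_\Omega$ is the support function and $A$ the mixed area, one gets $\ell_\Omega(\partial\Lambda)^2\geq 4\on{Vol}(\Lambda)\on{Vol}(\Omega)$ for every convex $\Lambda$, with equality precisely for $\Lambda$ homothetic to $\Omega$; the degeneracy of $\ell_\Omega$ on the axis directions causes no trouble because $h_\Omega\geq0$ everywhere. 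The one gap you flag but do not close --- passing from $L_\Lambda=k+1$ to $\on{Vol}(\Lambda)\geq k-o(k)$ uniformly over minimisers --- is real but closable: by Pick's theorem the defect is $\tfrac{1}{2}B_\Lambda+1$ with $B_\Lambda$ the number of boundary lattice points; the non-axis part of $\partial\Lambda$ has edge vectors $v$ in the closed fourth quadrant, where $\ell_\Omega(v)=h_\Omega(v^\perp)\geq\tfrac{1}{2}\on{min}{a,b}\,|v|$ since $h_\Omega$ is strictly positive on the closed first quadrant away from the origin; hence $B_\Lambda=O(\ell_\Omega(\partial\Lambda))=O(\sqrt{k})$ for any minimiser once the upper bound $c_k=O(\sqrt{k})$ is in hand. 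Note the logical order matters: the upper bound must be established first and then fed into the lower bound to exclude long thin competitors. What you lose relative to \cite{chr} is generality --- your argument says nothing beyond domains covered by Theorem \ref{thm:opt} --- but within the scope of this paper it is a complete, self-contained alternative.
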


This already allows us to calculate the constant $\lambda$ in Theorems \ref{thm:1} and \ref{thm:2} assuming that it exists. Theorem \ref{thm:asy} implies that $c_k(X)$ is asymptotic to $2\sqrt{k\on{Vol}(\Omega)}$ and so the cap function is asymptotic to
$$\#\{k:2\sqrt{k\on{Vol}(\Omega)}\leq r\}=\#\{k:k\leq \frac{1}{4\on{Vol}(\Omega)}r^2\}=\frac{1}{4\on{Vol}(\Omega)}r^2+1$$
The leading term of the Ehrhart polynomial in Theorem \ref{thm:1} and of the Hilbert polynomial in Theorem \ref{thm:2} is $\on{Vol}(\Omega)n^2$ and so the constant should be
$$\lambda=2\on{Vol}(\Omega)=\ell_\Omega(\partial\Omega)$$

\subsection{Examples}

We present some suggestive examples of calculations of capacities and cap functions for some basic convex toric domains.

\begin{example} $\on{cap}_{E(a,b)}(r)=\on{ehr}_Q(r)$, the Ehrhart quasipolynomial of the rational triangle
$$Q=\on{Conv}((0,0),(1/a,0),(0,1/b))$$
This is also equal to the Hilbert function of $\mO(1)$ for the weighted projective plane $\pr(1,a,b)$.
\end{example}

\begin{example} The cap function for the polydisk $P(a,b)$ has
$$\on{cap}_{P(a,b)}(2abr)=(ar+1)(br+1)=\on{hilb}_{(\pr^1\times\pr^1,\mO(a,b))}(r)$$
\end{example}

\begin{example} Let $\Omega(a)$ be the convex hull of the points $(0,0),(0,2a),(a,a),(a,0)$. One has
$$\on{cap}_{X_{\Omega(a)}}(3ar)=h^0(X,rD)$$
where $X$ is the first Hirzebruch surface, or $\pr^2$ blown up in one point, and where $D=3C+2F$ with $C$ the $(-1)$-curve and $F$ a fibre in the $\pr^1$-bundle structure on $X$.
\end{example}

These examples all suggest a tight relationship between computations of symplectic capacities for convex toric domains and Hilbert functions of divisors on toric surfaces. Establishing and exploiting such a relationship is the subject of the remainder of this paper.

\section{Toric algebraic geometry}

We begin by reviewing some basic toric algebraic geometry. A toric variety is a partial compactification of an algebraic torus $(\C^\times)^n$. They are described combinatorially by cones, fans, and polytopes. This and much more is detailed in \cite{cls}.

\subsection{Affine toric varieties arise from cones}
Let $N\cong\Z^n$ be a lattice and let $N_\R:=N\otimes_\Z\R$ be the associated real vector space. A \textit{cone} $\sigma$ in $N_\R$ is a  subset of the form
$$\on{Cone}(S):=\{\sum_{v\in S}\lambda_vv:\lambda_v\geq0,\text{all but finitely many $\lambda_v$ are zero}\}$$
Let $M=N^\vee:=\on{Hom}_\Z(N,\Z)$ be the dual lattice to $N$, and $M_\R=M\otimes_\Z\R$ the dual vector space to $N_\R$. Define the \textit{dual cone} to a cone $\sigma\subset N_\R$ to be
$$\sigma^\vee:=\{v\in M_\R:\langle u,v\rangle\geq0\text{ for all $u\in\sigma$}\}$$
where $\langle\cdot,\cdot\rangle$ is the dual pairing $N_\R\times M_\R\to\R$. Suppose now that $\sigma$ is a \textit{rational polyhedral} cone: that there is a finite set of lattice points $S\subset N$ such that $\sigma=\on{Cone}(S)$. Such a cone $\sigma$ gives an affine toric variety $U_\sigma$ as follows.
\begin{itemize}
\item \textbf{Input:} $\sigma$, a rational polyhedral cone
\item Dualise to $\sigma^\vee$
\item Take lattice points $\sigma^\vee\cap M$ to obtain a semigroup
\item Take the semigroup algebra $\C[\sigma^\vee\cap M]$; this is a finitely generated $\C$-algebra
\item \textbf{Output:} $U_\sigma:=\on{Spec}{\C[\sigma^\vee\cap M]}$.
\end{itemize}
Notice that the dense open torus arises from $\C[\sigma^\vee\cap M]\subset\C[M]\cong\C[\Z^n]$, which is the ring of Laurent polynomials, or the ring of functions for the torus $(\C^\times)^n$. The cone $\sigma$ (or rather $\sigma^\vee$) is describing which functions on the torus extend to global functions on $U_\sigma$, which is equivalent to describing the variety. One can describe the torus inside $U_\sigma$ intrinsically as
$$T_N:=N\otimes_\Z\C^\times$$
In this presentation, a vector $m\in M$ gives a function $\chi^m:T_N\to\C$ via
$$\chi^m(n\otimes t)=t^{\langle m,n\rangle}$$

\begin{example} Take $N=\Z^2$ and let $\sigma=\on{Cone}(e_1,e_2)$. The dual cone is $\sigma^\vee=\on{Cone}(e^1,e^2)$ giving
$$\sigma^\vee\cap M=\Z_{\geq0}^2\text{ and }\C[\sigma^\vee\cap M]\cong\C[x,y]$$
Hence $U_\sigma\cong\C^2$. In this case, $\sigma^\vee$ prescribes that the only Laurent polynomials extending to all of $U_\sigma$ are the polynomials.
\end{example}

\subsection{Toric varieties arise from fans} To construct non-affine (in particular, compact) toric varieties we glue together affine toric varieties in an torus-equivariant way. The combinatorial avatar of this process is collecting cones together in a \textit{fan}. To start with, a \textit{face} of a cone $\sigma$ is a subset of $\sigma$ of the form $\sigma\cap(\langle m,\cdot\rangle=0)$ for some $m\in\sigma^\vee$. The cones forming the boundary of $\sigma$ are examples of faces, as is the vertex of the cone (the origin). A \textit{fan} in $N_\R$ is a collection of cones $\Sigma=\{\sigma\}$ such that
\begin{itemize}
\item if $\tau\subset\sigma$ is a face, then $\tau\in\Sigma$
\item for any two cones $\sigma_1,\sigma_2\in\Sigma$, $\sigma_1\cap\sigma_2$ is a face of each
\end{itemize}
A fan $\Sigma$ produces a toric variety $Y_\Sigma$ via gluing two affine pieces $U_{\sigma_1},U_{\sigma_2}$ according to the (potentially zero-dimensional) face they have in common.

\begin{example} Take $N=\Z^2$ and $\Sigma$ to be the fan containing the cones $\sigma_1=\on{Cone}(e_1,e_2),\sigma_2=\on{Cone}(e_1,-e_1-e_2),\sigma_3=\on{Cone}(e_2,-e_1-e_2)$ and their faces. The two-dimensional cones give three copies of $\C^2$ and the gluing prescribed by the faces makes this into $\pr^2$. For example, $\sigma_1$ and $\sigma_3$ share the face $\on{Cone}(e_2)$ that corresponds to the toric variety $\C^\times\times\C$. Gluing $\C^2$ to $\C^2$ along $\C^\times\times\C$ is familiar from the gluing construction of projective space.
\end{example}

\subsection{Compact toric varieties arise from polytopes} Suppose $P\subset N_\R$ is a lattice polytope. One can produce a fan $\Sigma_P$ from $P$ via
$$\Sigma_P:=\{\on{Cone}(S):S\subset\on{Vert}(P)\text{ such that all $u\in S$ share a face}\}$$
This is called the \textit{face fan} of $P$ and defines a toric variety $Y_P:=Y_{\Sigma_P}$ that turns out to be compact.
\\

A polytope $Q\subset M_\R$ also defines a toric variety $V_Q$. Let $L_Q=\#Q\cap M$ and define a map $\phi_Q:T_N\to\pr^{L_Q-1}$ by $x\mapsto(\chi^m(x))_{m\in Q\cap M}$. The toric variety $V_Q$ is defined to be the closure of the image of $\phi_Q$ in $\pr^{L_Q-1}$. If we define the dual polytope
$$P^\vee:=\{v\in M_\R:\langle u,v\rangle\geq-1\}$$
then the toric variety $Y_P$ is also described abstractly as the variety $V_{kP^\vee}$ for large enough $k$, from which it is readily apparent that it is compact.

\begin{example}
A polytope for $\pr^2$ is the triangle with vertices $e_1,e_2,-e_1-e_2$. The dual polytope is the triangle with vertices $2e^1-e_2,-e_1+2e_2,-e^1-e^2$. This has $10$ lattice points and describes the third Veronese (or anticanonical) embedding of $\pr^2$ in $\pr^9$.
\end{example}

In the $V_Q$ presentation, one can interpret $Q$ as the moment polytope for the compact torus action on $V_Q$ by composing the map $\phi_Q$ with the moment map on $\pr^{L_Q-1}$.
\\

This toric variety $V_Q$ as an abstract variety is equivariantly isomorphic to the variety $X_{\Sigma(Q)}$ arising from the inner normal fan of $Q$.

\subsection{Polytopes arise from divisors}

A (Weil) divisor on a normal variety is a formal $\Z$-linear combination of codimension one subvarieties. Divisors on a variety $X$ up to an equivalence relation called rational equivalence form a group called the \textit{class group} of $X$. For a toric variety $X$ containing dense open torus $T$, the class group is generated by the components of the toric boundary $X\setminus T$. If $X=Y_\Sigma$ is given by a fan, these boundary components correspond to the rays of $\Sigma$. The set of rays is commonly denoted $\Sigma(1)$. Thus, every divisor on $Y_\Sigma$ is rationally equivalent to one of the form
$$\sum_{\rho\in\Sigma(1)}a_\rho D_\rho$$
One can associate a polytope $P(D)$ to a divisor of this form as follows. Let $u_\rho$ be the primitive lattice point lying on the ray $\rho$. Then set
$$P(D):=\{v\in M_\R:\langle u_\rho,v\rangle\geq-a_\rho\text{ for all $\rho\in\Sigma(1)$}\}$$
The hyperplanes defining the facets of $P(D)$ are given by $\langle u_\rho,\cdot\rangle=-a_\rho$ and so this construction of $P(D)$ taking in the data $(u_\rho,a_\rho)_{\rho\in\Sigma(1)}$ is often referred to as a `facet presentation' for $P(D)$. Denote by $\mO(D)$ the line bundle associated to a (Cartier) divisor $D$.

\begin{lemma}[\cite{cls}, Proposition 4.3.3] Let $D=\sum_\rho a_\rho D_\rho$. A basis of $H^0(\mO(D))$ is in bijection with lattice points of $P(D)$. That is,
$$\#P(D)\cap M=L_{P(D)}=h^0(\mO(D))$$
\end{lemma}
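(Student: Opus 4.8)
The plan is to establish the bijection between a natural basis of $H^0(\mO(D))$ and the lattice points $P(D)\cap M$ directly, using the torus-equivariant decomposition of the space of global sections. The key observation is that $\mO(D)$ carries a natural $T_N$-action, so $H^0(\mO(D))$ decomposes into weight spaces indexed by characters $\chi^m$ for $m\in M$. Concretely, I would first recall that a global section of $\mO(D)$ can be identified with a rational function $f$ on $Y_\Sigma$ whose divisor satisfies $\on{div}(f)+D\geq0$; the space of such $f$ is spanned by the characters $\chi^m$ that satisfy this inequality.

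The main computation is to translate the condition $\on{div}(\chi^m)+D\geq0$ into the facet inequalities defining $P(D)$. First I would use the standard fact that, for $m\in M$, the principal divisor is $\on{div}(\chi^m)=\sum_{\rho\in\Sigma(1)}\la u_\rho,m\ra D_\rho$, where $u_\rho$ is the primitive generator of the ray $\rho$. Adding $D=\sum_\rho a_\rho D_\rho$ and demanding that every coefficient be nonnegative gives, for each ray $\rho$, the inequality $\la u_\rho,m\ra+a_\rho\geq0$, i.e. $\la u_\rho,m\ra\geq-a_\rho$. Comparing with the facet presentation
$$P(D)=\{v\in M_\R:\la u_\rho,v\ra\geq-a_\rho\text{ for all }\rho\in\Sigma(1)\}$$
shows that $\chi^m$ is a global section precisely when $m\in P(D)\cap M$. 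Since distinct characters are linearly independent, these $\chi^m$ form a basis, yielding the bijection and hence the equalities $\#P(D)\cap M=L_{P(D)}=h^0(\mO(D))$.

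The step I expect to require the most care is justifying that $H^0(\mO(D))$ really is spanned by characters $\chi^m$ and contains no other sections. This is where one uses that $Y_\Sigma$ is a normal toric variety with dense torus $T_N$: a global section is a rational function, rational functions on a toric variety are spanned by characters, and the grading by $M$ induced by the $T_N$-action forces each homogeneous piece of a section to again be a section. The honest content is therefore the equivariant structure of $H^0$, and I would cite the structure theorem for this decomposition (precisely the content of \cite{cls}, Proposition 4.3.3) rather than reprove it from scratch. Everything else is the routine identification of the coefficient-positivity condition with the facet inequalities carried out above.
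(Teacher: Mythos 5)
The paper offers no proof of this lemma at all; it is quoted verbatim from \cite{cls}, Proposition 4.3.3. Your argument is the standard proof of that proposition and is correct: identify $H^0(\mO(D))$ with $\{f\in\C(Y_\Sigma)^*:\on{div}(f)+D\geq0\}\cup\{0\}$, use the $M$-grading coming from the $T_N$-action to reduce to characters $\chi^m$, apply $\on{div}(\chi^m)=\sum_\rho\langle u_\rho,m\rangle D_\rho$, and observe that coefficientwise nonnegativity of $\on{div}(\chi^m)+D$ is exactly the facet presentation $\langle u_\rho,m\rangle\geq-a_\rho$ of $P(D)$, with linear independence of distinct characters finishing the count. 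The only blemish is in your final paragraph: you propose to justify the weight-space decomposition of $H^0$ by citing ``precisely the content of \cite{cls}, Proposition 4.3.3,'' which is the statement being proved and would make the argument circular. The correct independent input is the eigenspace decomposition of a rational representation of the torus (in \cite{cls} this is the preparatory material, Proposition 4.3.2 and the surrounding lemmas, not 4.3.3 itself), or simply the observation that the $T_N$-action on sections over the torus-invariant opens preserves the subspace cut out by $\on{div}(f)+D\geq0$, so each $M$-homogeneous component of a section is again a section. With that substitution your proof is complete and is exactly the one the cited reference gives.
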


Notice that there can be multiple facet presentations corresponding to the same divisor if some of the hyperplanes give redundant inequalities.

\subsection{Divisors arise from support functions}

Fix a fan $\Sigma$. The \textit{support} $|\Sigma|$ of $\Sigma$ is the union of the cones it contains. A \textit{support function} on $\Sigma$ is a function $\ph:|\Sigma|\to\R$ such that $\ph|_\sigma$ is linear for each $\sigma\in\Sigma$. An \textit{integral} support function is a support function such that $\ph(|\Sigma|\cap N)\subset\Z$. An integral support function $\ph$ produces a (Cartier) divisor $D$ via
$$D=-\sum_{\rho\in\Sigma(1)}\ph(u_\rho)D_\rho$$
and this process is actually reversible (so long as $D$ is Cartier).

\section{Reformulating capacities in toric algebraic geometry}

\subsection{$\Omega$-stretching}

Consider a convex domain $\Omega\subset\R^2$. For a polygon $\Lambda$ define $S_\Omega\Lambda$ to be the polygon with edges parallel to the edges of $\Omega$ by placing an edge of slope $v_i$ at the point or points at which $v_i$ is tangent to $\Lambda$, using corners if necessary. For example,

\begin{figure}[h]
\caption{Example of $\Omega$-stretching}
\begin{center}
\begin{tikzpicture}
\node (o1) at (0,0){};
\node (o2) at (2,0){};
\node (o3) at (0,2){};
\node (o4) at (2,2){};

\node (o) at (1,1){$\Omega$};

\draw (o1.center) to (o2.center) to (o4.center) to (o3.center) to (o1.center);

\node (a1) at (4,0){};
\node (a2) at (6,1){};
\node (a5) at (6,2){};
\node (a3) at (5,2){};
\node (a4) at (3,1/2){};

\node (a) at (5,1.25){$\Lambda$};

\draw (a1.center) to (a2.center) to (a5.center) to (a3.center) to (a4.center) to (a1.center);

\node (a1) at (8,0){};
\node (a2) at (10,1){};
\node (a5) at (10,2){};
\node (a3) at (9,2){};
\node (a4) at (7,1/2){};

\node (b1) at (7,0){};
\node (b2) at (10,0){};
\node (b3) at (10,2){};
\node (b4) at (7,2){};

\node (b) at (7.5,1.5){$S_\Omega\Lambda$};

\draw[dashed] (a1.center) to (a2.center) to (a5.center) to (a3.center) to (a4.center) to (a1.center);
\draw (b1.center) to (b2.center) to (b3.center) to (b4.center) to (b1.center);
\end{tikzpicture}
\end{center}
\end{figure}

We call the resulting polygon $S_\Omega\Lambda$ the $\Omega$\textit{-stretching} of $\Lambda$. The following lemma is due to Michael Hutchings.

\begin{lemma} \label{lem:stretch} $\ell_\Omega(\partial\Lambda)=\ell_\Omega(\partial S_\Omega\Lambda)$.
\end{lemma}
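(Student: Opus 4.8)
The plan is to recognise the $\Omega$-length of a closed boundary as (twice) the mixed area of $\Omega$ with the enclosed polygon, and then to exploit the symmetry of mixed area together with the defining property of $S_\Omega\Lambda$.

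First I would unwind the definition of $\ell_\Omega$. Writing the support function $h_\Omega(u):=\operatorname{max}_{q\in\Omega}\la u,q\ra$ and letting $R$ denote rotation by a quarter turn, the condition defining $p_v$ says exactly that $p_v$ maximises $v\times q=\la Rv,q\ra$ over $q\in\Omega$, so $\ell_\Omega(v)=h_\Omega(Rv)$. Summing over the oriented edge vectors $v_i$ of the closed convex path $\partial\Lambda$ and writing $v_i=|e_i|\hat\tau_i$, this gives $\ell_\Omega(\partial\Lambda)=\sum_i|e_i|\,h_\Omega(\hat\nu_i)$, where $\hat\nu_i=R\hat\tau_i$ is the unit normal to the $i$-th edge of $\Lambda$. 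By Minkowski's formula for the mixed area of two planar convex bodies, the right-hand side is precisely $2V(\Omega,\Lambda)$, twice the mixed area of $\Omega$ and $\Lambda$.

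Next I would invoke the symmetry $V(\Omega,\Lambda)=V(\Lambda,\Omega)$ and expand the mixed area the other way, over the edges $f_j$ of $\Omega$ with unit normals $\mu_j$:
$$\ell_\Omega(\partial\Lambda)=2V(\Omega,\Lambda)=\sum_j|f_j|\,h_\Lambda(\mu_j).$$
The point of this reformulation is that the right-hand side sees $\Lambda$ only through the values $h_\Lambda(\mu_j)$, that is, through the supporting lines of $\Lambda$ in the finitely many directions normal to the edges of $\Omega$. But this is exactly the data preserved by $\Omega$-stretching: by construction $S_\Omega\Lambda$ is the polygon whose edge in each direction of $\Omega$ lies along the supporting line of $\Lambda$ tangent in that direction, so $h_{S_\Omega\Lambda}(\mu_j)=h_\Lambda(\mu_j)$ for every $j$. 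Applying the displayed identity with $S_\Omega\Lambda$ in place of $\Lambda$ then yields $\ell_\Omega(\partial\Lambda)=\sum_j|f_j|\,h_{S_\Omega\Lambda}(\mu_j)=\ell_\Omega(\partial S_\Omega\Lambda)$, as desired.

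The main obstacle is the bookkeeping of orientations in the first two steps: one must fix the traversal direction of $\partial\Lambda$ and the sign of $R$ so that the mixed area that appears is that of $\Omega$ itself, and not of its reflection $-\Omega$, for only then do the normal directions $\mu_j$ surviving in the symmetric expansion coincide with the directions in which $S_\Omega\Lambda$ is built to agree with $\Lambda$. A useful consistency check fixing these conventions is the known special case $\ell_\Omega(\partial\Omega)=2V(\Omega,\Omega)=2\Vol{\Omega}$. Should one prefer to avoid mixed-area machinery entirely, the same identity can be proved by hand: since $\Lambda$ is convex its edge directions sweep monotonically once around the circle, so one may partition the edges of $\partial\Lambda$ into arcs according to which vertex of $\Omega$ attains the maximum in $h_\Omega(Rv)$; on each arc $\ell_\Omega$ is linear in the edge vector, so that arc contributes $(\text{chord of the arc})\times(\text{that vertex of }\Omega)$, and the chords depend only on the $\Omega$-normal supporting points of $\Lambda$, which $S_\Omega\Lambda$ shares. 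The telescoping of this sum is the elementary incarnation of the mixed-area symmetry used above.
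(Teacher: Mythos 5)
Your proof is correct, and your primary route is genuinely different in packaging from the paper's, though the two rest on the same underlying observation. The paper argues directly: it groups the edges of $\partial\Lambda$ according to which vertex $p_i$ of $\Omega$ computes their $\Omega$-length, writes $\ell_\Omega(\partial\Lambda)=\sum_i p_i\times(q_{i+1}-q_i)$ where the $q_i$ are the tangent points of $\partial\Lambda$ in the edge directions of $\Omega$, and observes that $S_\Omega\Lambda$ admits the very same tangent points, so the sum is unchanged. You instead identify $\ell_\Omega(v)$ with the support function $h_\Omega(Rv)$, recognise $\ell_\Omega(\partial\Lambda)$ as $2V(\Omega,\Lambda)$ via Minkowski's formula, and use the symmetry of mixed area to re-expand over the edges of $\Omega$, at which point $\Lambda$ enters only through its support values in the finitely many normal directions of $\Omega$ --- exactly the data that $\Omega$-stretching preserves. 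What your route buys is a conceptual explanation (the lemma is literally the symmetry of mixed area restricted to the normal fan of $\Omega$) and, as a free by-product, the identity $\ell_\Omega(\partial\Lambda)=\ell_\Lambda(\partial\Omega)$, which the paper only obtains later as a corollary of the intersection-theoretic Lemma \ref{lem:dot}. What the paper's version buys is self-containedness: no appeal to mixed-area machinery, and the orientation bookkeeping you rightly flag as the main hazard is handled implicitly by the cross-product convention already fixed in the definition of $\ell_\Omega$. Your closing ``by hand'' sketch --- partitioning the edges of $\partial\Lambda$ into arcs by which vertex of $\Omega$ attains the maximum and telescoping --- is precisely the paper's proof, so you have in effect supplied both arguments.
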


\begin{proof} Let $p_1,\dots,p_k$ denote the vertices of $\Omega$. Let $q_i$ be a point on $\partial\Lambda$ such that a tangent vector to $\partial\Lambda$ at $q_i$ is parallel to the vector $p_i-p_{i-1}$. Then by definition, the $\Omega$-length of $\partial\Lambda$ is
$$\sum_ip_i\times(q_{i+1} - q_i)$$
Notice that the same points $q_i$ still satisfy the requirements for computing the $\Omega$-length of $\partial S_\Omega\Lambda$, so that the nothing changes in the expression of $\ell_\Omega(\partial S_\Omega\Lambda)$ from that for $\ell_\Omega(\partial\Lambda)$.
\end{proof}

The effect of $\Omega$-stretching is to produce a polygon of the same $\Omega$-length but with edges parallel to the edges of $\Omega$.

\subsection{Slope polytopes}

Let $\Omega$ be a rational convex domain. Denote its set of edges by $\on{Edge}(\Omega)$. An \textit{edge-orientation} $\mfk{o}$ of $\Delta$ is an orientation of each of its edges in such a way that the boundary of $\Delta$ is an oriented cycle. A polygon with an edge-orientation is called \textit{edge-oriented}. Given an edge-oriented convex lattice domain $\Omega$, define the \textit{slope} $v_e$ of an edge $e\in\on{Edge}(\Omega)$ to be the primitive lattice vector in the direction of the oriented edge. That is, of $e$ has endpoints $e_-$ and $e_+$ with orientation making $e_-$ the tail and $e_+$ the head, $v_e$ is the primitive ray generator of the ray $\R_{\geq0}\cdot(e_+-e_-)$.

\begin{definition} The \textit{slope polytope} of an edge-oriented convex lattice domain $\Omega$ is the lattice polytope
$$\on{Sl}(\Omega):=\on{Conv}(v_e:e\in\on{Edge}(\Omega))$$
\end{definition}

This produces a compact toric variety $Y_{\on{Sl}(\Omega)}$ on which the algebraic geometry side of the story will take place. We will actually work with a blowup of this toric variety, which we will denote by $\wt{Y}_{\on{Sl}(\Omega)}$.
\\

This blowup is obtained by creating a new fan by inserting rays through any slopes $v_e$ that are not vertices of $\on{Sl}(\Omega)$. For example, suppose that $\Omega$ has slopes $-e_1,e_2,e_1,e_1-e_2,e_1-2e_2$. The slope polytope only has vertices $-e_1,e_2,e_1,e_1-2e_2$ and so one extra ray has to be added for $e_1-e_2$. This is demonstrated pictorally below.
\begin{figure}[h]
\caption{Blowup of $Y_{\on{Sl}(\Omega)}$}
\begin{center}
\begin{tikzpicture}
\node (o) at (0,0){$\bullet$};
\node (a) at (-1,0){$\bullet$};
\node (b) at (0,1){$\bullet$};
\node (c) at (1,0){$\bullet$};
\node (d) at (1,-1){$\bullet$};
\node (e) at (1,-2){$\bullet$};
\node (al) at (-1.5,0){};
\node (bl) at (0,1.5){};
\node (cl) at (1.5,0){};
\node (el) at (1.25,-2.5){};

\draw (a.center) to (b.center) to (c.center) to (d.center) to (e.center) to (a.center);
\draw[dashed] (o.center) to (al);
\draw[dashed] (o.center) to (bl);
\draw[dashed] (o.center) to (cl);
\draw[dashed] (o.center) to (el);

\node (o) at (4,0){$\bullet$};
\node (a) at (3,0){$\bullet$};
\node (al) at (2.5,0){};
\node (b) at (4,1){$\bullet$};
\node (bl) at (4,1.5){};
\node (c) at (5,0){$\bullet$};
\node (cl) at (5.5,0){};
\node (d) at (5,-1){$\bullet$};
\node (dl) at (5.5,-1.5){};
\node (e) at (5,-2){$\bullet$};
\node (el) at (5.25,-2.5){};

\draw (a.center) to (b.center) to (c.center) to (d.center) to (e.center) to (a.center);
\draw[dashed] (o.center) to (al);
\draw[dashed] (o.center) to (bl);
\draw[dashed] (o.center) to (cl);
\draw[dashed] (o.center) to (dl);
\draw[dashed] (o.center) to (el);

\node (l1) at (0.3,-2.8){Fan for $Y_{\on{Sl}(\Omega)}$};
\node (l1) at (4.3,-2.8){Fan for $\wt{Y}_{\on{Sl}(\Omega)}$};
\end{tikzpicture}
\end{center}
\end{figure}
\\

We will denote the resulting fan for the blowup by $\wt{\Sigma}_{\on{Sl}(\Omega)}$. Observe that this fan is in some sense a rotation of the inner normal fan of $\Omega$ after picking bases, though they naturally live in dual lattices. At the end of this section we will provide an alternative version of the content below phrased in terms of the inner normal fan instead of the slope polytope. It can be favourable to use each of these perspectives at different times.

\subsection{Balance divisors}

As above, let $\Omega$ be a rational convex domain oriented clockwise with slope polytope $\on{Sl}(\Omega)$. We will subsequently always assume that $\Omega$ has this orientation. Define the $\Omega$\textit{-length} of a vector $v\in\R^2$ to be
$$\ell_\Omega(v):=v\times p_v$$
where $p_v$ is a boundary point of $\Omega$ such that the halfplane $p_v+\{u\in\R^2:u\times v\geq0\}$ contains $\Omega$. Recall that the two-dimensional cross product $u\times v$ of two vectors $u$ and $v$ is defined to be the determinant of the matrix with $u$ and $v$ as first and second columns respectively.

\begin{lemma} Suppose $\Omega$ is lattice (resp. rational). The $\Omega$-length is an integral (resp. rational) support function for the fan $\wt{\Sigma}_{\on{Sl}(\Omega)}$.
\end{lemma}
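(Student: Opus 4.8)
The plan is to verify the two defining conditions of an (integral) support function separately: that $\ell_\Omega$ restricts to a linear function on each cone of $\wt{\Sigma}_{\on{Sl}(\Omega)}$, and that it takes integral (resp.\ rational) values on the lattice $N=\Z^2$ in which this fan lives. Both rest on a single geometric observation about the supporting point $p_v$, namely that as $v$ ranges over a fixed maximal cone of $\wt{\Sigma}_{\on{Sl}(\Omega)}$, the point $p_v$ may be taken to be constant. Since the edge slopes wind once around the origin, $\wt{\Sigma}_{\on{Sl}(\Omega)}$ is a complete fan, so every lattice point lies in some maximal cone and it suffices to argue cone by cone.

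To establish the observation, recall that the maximal cones of $\wt{\Sigma}_{\on{Sl}(\Omega)}$ are exactly those spanned by two cyclically consecutive slopes $v_{e_i},v_{e_{i+1}}$, where $e_i,e_{i+1}$ are adjacent edges of $\Omega$ meeting at a vertex $p_\sigma$. (Here one uses that $\Omega$ is oriented clockwise, so that its interior lies on the clockwise side of each edge direction, which is precisely the halfplane $\{u:u\times v\ge0\}$ appearing in the definition of $\ell_\Omega$.) For $v$ in the interior of such a cone, the supporting line of $\Omega$ in the direction $v$ pivots about $p_\sigma$ and meets $\Omega$ only there, so $p_v=p_\sigma$; this choice remains admissible on the two bounding rays as well. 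Consequently, on the whole cone we have $\ell_\Omega(v)=v\times p_\sigma=\on{det}(v\mid p_\sigma)$, which is manifestly linear in $v$. This is exactly the point at which the blowup is needed: on the coarser fan $\Sigma_{\on{Sl}(\Omega)}$ a single maximal cone may contain several edge slopes in its interior, across which $p_v$ jumps from one vertex of $\Omega$ to the next, so $\ell_\Omega$ would only be piecewise linear there. Refining to $\wt{\Sigma}_{\on{Sl}(\Omega)}$, which inserts a ray through every slope, makes $\ell_\Omega$ genuinely linear on each cone.

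It then remains to check that these cone-by-cone linear functions glue to a well-defined continuous function, i.e.\ agree on shared rays. A ray $\R_{\ge0}v_{e_i}$ bounds the two cones associated to the two vertices $p,p'$ at the endpoints of the edge $e_i$; since $p-p'$ is parallel to $v_{e_i}$ we get $v_{e_i}\times(p-p')=0$, so the two linear formulas agree along the ray. This confirms that $\ell_\Omega$ is a support function. Finally, for integrality one evaluates on $v\in N=\Z^2$: the quantity $\on{det}(v\mid p_\sigma)$ is the determinant of a matrix whose columns are $v$ and the vertex $p_\sigma$, so if $\Omega$ is a lattice polygon then $p_\sigma\in\Z^2$ and this determinant is an integer, while if $\Omega$ is merely rational then $p_\sigma\in\Q^2$ and it is rational, as claimed.

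I expect the main obstacle to be the careful bookkeeping in the second step: pinning down, with the clockwise orientation and the halfplane convention fixed, exactly which vertex $p_\sigma$ of $\Omega$ is selected by a given maximal cone, and verifying that $p_v$ really is constant there rather than varying. Once the correspondence between maximal cones of $\wt{\Sigma}_{\on{Sl}(\Omega)}$ and vertices of $\Omega$ is nailed down, linearity, the gluing on shared rays, and integrality all follow immediately.
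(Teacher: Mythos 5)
Your proof is correct and follows essentially the same route as the paper's: on each maximal cone of $\wt{\Sigma}_{\on{Sl}(\Omega)}$, spanned by two adjacent slopes, the supporting point $p_v$ is the shared vertex of the corresponding edges, so $\ell_\Omega$ is linear there, and integrality (resp.\ rationality) follows since the vertices are lattice (resp.\ rational) points. Your additional checks --- that the linear pieces agree on shared rays and that the refinement to $\wt{\Sigma}_{\on{Sl}(\Omega)}$ is exactly what makes the function linear rather than merely piecewise linear on each cone --- are worthwhile details that the paper leaves implicit, but they do not change the argument.
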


\begin{proof} Suppose $v,v'$ are adjacent slopes in $\Omega$. The $\Omega$-length applied to any vector $w\in\on{Cone}(v,v')=\sigma$ is given by
$$\ell_\Omega(w)=p\times w$$
where $p$ is the vertex shared between the two edges of slopes $v$ and $v'$ respectively. This is linear on the cone $\sigma$, which features in $\Sigma_{\on{Sl}(\Omega)}$ by definition and describes all full-dimensional cones in $\Sigma_{\on{Sl}(\Omega)}$ as $v,v'$ range over adjacent slopes. $\ell_\Omega$ is clearly integral on integral vectors when the vertices of $\Omega$ are lattice points, and similarly for the rational case.
\end{proof}

\begin{definition} \label{def:bal} The \textit{balance divisor} for $\Omega$ is the $\Q$-Cartier divisor $D_\Omega$ associated with the support function $-\ell_\Omega$. Notice the change in sign.
\end{definition}

\begin{cor} The coefficients of $D_\Omega$ as a Weil divisor are
$$a_v=\ell_\Omega(v)$$
for a (primitive) slope vector $v$ of $\Omega$.
\end{cor}

\begin{cor} $D_\Omega$ is ample.
\end{cor}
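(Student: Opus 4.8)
The plan is to apply the standard ampleness criterion for toric varieties: on a complete toric variety a ($\Q$-)Cartier divisor is ample if and only if its support function is strictly convex, meaning it is convex and its restrictions to distinct maximal cones are distinct linear functions (see \cite{cls}, \S6.1). Since $D_\Omega$ is only $\Q$-Cartier, I would apply the criterion directly to the $\Q$-valued support function $-\ell_\Omega$, or equivalently pass to a Cartier multiple $kD_\Omega$ clearing denominators, which is ample precisely when $D_\Omega$ is.

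First I would recall from the proof of the preceding lemma that on each maximal cone $\on{Cone}(v,v')$ of $\wt{\Sigma}_{\on{Sl}(\Omega)}$, spanned by adjacent slopes $v,v'$, one has $\ell_\Omega(w)=p\times w$, where $p$ is the vertex of $\Omega$ common to the two edges of slopes $v$ and $v'$. Thus the support function $-\ell_\Omega$ of $D_\Omega$ is linear on each maximal cone, with its linear piece indexed by the corresponding vertex of $\Omega$. Strict convexity then reduces to two points: that distinct maximal cones carry distinct linear pieces, and that $-\ell_\Omega$ is globally convex in the sense dual to $\Omega$.

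The first point is immediate: consecutive pairs of edges of a convex polygon meet in distinct vertices, and the assignment $p\mapsto(w\mapsto p\times w)$ is linear and injective, so distinct vertices yield distinct linear pieces and no two maximal cones share one. For the second point I would show that $\ell_\Omega$ is the support function of the convex body $\Omega$ with respect to the cross product, namely $\ell_\Omega(v)=\on{max}\{v\times x:x\in\Omega\}$; this follows directly from the supporting-halfplane condition defining $p_v$, which forces $p_v$ to minimise $x\mapsto x\times v$ over $\Omega$. Being a maximum of linear functions, $\ell_\Omega$ is convex, so $-\ell_\Omega$ is concave, which is exactly convexity in the toric sense. Concretely, writing $J$ for rotation by a right angle, the facet presentation of $\Omega$ reads $\Omega=\{x:\langle Jv,x\rangle\geq-\ell_\Omega(v)\}$ as $v$ ranges over slopes, whence $P(D_\Omega)=J^\top\Omega$ and $-\ell_\Omega(u)=\on{min}\{\langle m,u\rangle:m\in P(D_\Omega)\}$. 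This identity is precisely the statement that $-\ell_\Omega$ is the (convex) support function of the polytope $P(D_\Omega)$, so $D_\Omega$ is nef; combined with the distinctness of linear pieces it upgrades to ample. As a byproduct this reproves the key property that $P(D_\Omega)$ is $\Omega$ up to the lattice rotation $J^\top$.

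The main obstacle is the sign and rotation bookkeeping: one must check that the convexity comes out in the direction making $D_\Omega$, rather than $-D_\Omega$, ample, and this is exactly where the clockwise orientation of $\Omega$ and the choice of halfplane in the definition of $\ell_\Omega$ enter. It is also worth emphasising the role of the blowup: inserting a ray through every slope $v_e$ that is not a vertex of $\on{Sl}(\Omega)$ is precisely what makes $\ell_\Omega$ linear on each maximal cone (hence a genuine support function) and what makes the normal fan of $P(D_\Omega)$ equal all of $\wt{\Sigma}_{\on{Sl}(\Omega)}$, with every ray facet-defining and none redundant; without it the support function would not be piecewise linear with respect to the fan and would define no ($\Q$-)Cartier divisor, so the criterion could not be applied.
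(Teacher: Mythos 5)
Your proof is correct and follows the same route as the paper, whose entire argument is the one-line assertion that $-\ell_\Omega$ is strictly convex (in the toric sense) and hence $D_\Omega$ is ample. You have simply carried out the ``straightforward check'' the paper leaves implicit, including the sign/convention bookkeeping, so there is nothing to add.
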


\begin{proof} It is a straightforward check that $-\ell_\Omega$ is a strictly convex function, which corresponds to $D_\Omega$ being ample.
\end{proof}

\begin{lemma} The polytope for $D_\Omega$ is the result of rotating $\Omega$ $90^\circ$ anticlockwise around the origin.
\end{lemma}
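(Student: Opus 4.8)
The plan is to compare the facet presentation of the polytope $P(D_\Omega)$ with a supporting-halfplane presentation of $\Omega$, and to exhibit the $90^\circ$ rotation as the linear map intertwining the two. By the preceding corollary the Weil-divisor coefficients of $D_\Omega$ are $a_v=\ell_\Omega(v)$ as $v$ ranges over the primitive slope vectors of $\Omega$, which are exactly the ray generators $u_\rho$ of the fan $\wt{\Sigma}_{\on{Sl}(\Omega)}$ (one per edge of $\Omega$, and all distinct, since a clockwise traversal of a convex polygon produces pairwise distinct primitive edge directions). Hence the facet presentation reads
$$P(D_\Omega)=\{w\in\R^2:\la v_e,w\ra\geq-\ell_\Omega(v_e)\text{ for all }e\in\on{Edge}(\Omega)\}.$$

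First I would unwind $\ell_\Omega(v_e)$. For each edge $e$ with oriented primitive direction $v_e$, one may take the boundary point $p_{v_e}$ appearing in the definition of $\ell_\Omega$ to be any point of $e$ itself: the edge lies on a supporting line of $\Omega$ in direction $v_e$, and with the clockwise orientation the halfplane $\{u:u\times v_e\geq0\}$ is precisely the side on which $\Omega$ lies. This choice is well defined because $v_e\times p$ is constant as $p$ moves along $e$ (the difference of two such points is parallel to $v_e$). Consequently $\ell_\Omega(v_e)=v_e\times p_{v_e}=-(p_{v_e}\times v_e)$, and the supporting halfplane for the edge $e$ gives the facet inequality $x\times v_e\geq p_{v_e}\times v_e$ for $x\in\Omega$.

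Next I would apply the rotation. Writing $w=(-x_2,x_1)$ for the $90^\circ$ anticlockwise image of $x=(x_1,x_2)$, a direct expansion of the dual pairing gives $\la v_e,w\ra=(v_e)_2x_1-(v_e)_1x_2=x\times v_e$. Substituting $-\ell_\Omega(v_e)=p_{v_e}\times v_e$, the defining inequality $\la v_e,w\ra\geq-\ell_\Omega(v_e)$ of $P(D_\Omega)$ becomes exactly $x\times v_e\geq p_{v_e}\times v_e$, the supporting inequality of $\Omega$ for the edge $e$. Since this holds for every edge and the two families of inequalities are in bijection, $w\in P(D_\Omega)$ if and only if $x\in\Omega$; that is, $P(D_\Omega)$ is the image of $\Omega$ under $90^\circ$ anticlockwise rotation about the origin.

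I expect the only real obstacle to be the bookkeeping of signs and orientations: the sign flip built into $D_\Omega$ (whose support function is $-\ell_\Omega$), the cross-product convention in the definition of $\ell_\Omega$, the clockwise orientation fixing which halfplane supports $\Omega$, and the direction of the rotation must all be tracked consistently. Once one fixes $w=(-x_2,x_1)$ and checks the single identity $\la v_e,w\ra=x\times v_e$, the matching of the two families of inequalities is immediate; the genuine risk is merely landing on a clockwise rather than anticlockwise rotation, or an off-by-sign on the right-hand side, so I would verify the computation on one explicit edge (for instance $v_e=e_1$) before asserting the general case.
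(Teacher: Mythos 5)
Your proof is correct. Both you and the paper ultimately match the facet presentation of $P(D_\Omega)$ against a halfplane description of $\Omega$ via the rotation $x\mapsto x^\perp=(-x_2,x_1)$, but the executions differ in a way worth noting. The paper works edge by edge: it takes the divisor $D$ whose polytope is $\Omega^\perp$, chooses coordinates in which two adjacent slopes become $(1,0)$ and $(0,-1)$ (assuming they form a $\Z$-basis, with the other case left as ``similar''), solves for the endpoints of the corresponding edge of $\Omega^\perp$, and evaluates a $2\times2$ determinant to recover $a_i=\ell_\Omega(v_i)$. Your argument is global and coordinate-free: the two observations that (a) $p_{v_e}$ may be taken on the edge $e$ itself, so that the facet datum $-\ell_\Omega(v_e)=p_{v_e}\times v_e$ is exactly the right-hand side of the supporting inequality $x\times v_e\geq p_{v_e}\times v_e$ of $\Omega$ along $e$, and (b) $\langle v_e,x^\perp\rangle=x\times v_e$, identify the two systems of inequalities wholesale. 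This avoids both the local coordinate change and the case distinction, and makes the sign/orientation bookkeeping (the minus sign in the support function $-\ell_\Omega$, the clockwise orientation, the direction of rotation) transparent; the only implicit input is that a convex polygon is the intersection of its edge-supporting halfplanes, which is standard. Your version also yields Corollary \ref{cor:lco} with no extra work, just as the paper's does.
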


\begin{proof} 
Denote by $\Omega^\perp$ the rotated version of $\Omega$. The edges of $\Omega$ are by construction orthogonal to the rays of $\wt{\Sigma}_{\on{Sl}(\Omega)}$ and so there is a facet presentation of $\Omega^\perp$ coming from this fan or, equivalently, a divisor $D$ on $\wt{Y}_{\on{Sl}(\Omega)}$. Order the slopes $v_1,\dots,v_s$ with corresponding toric boundary divisors $D_1,\dots,D_s$. It suffices that the coefficient $a_i$ of $D$ along $D_i$ is the same as the corresponding coefficient in $D_\Omega$. We will now compute this directly. The edge $e_i$ of $P^\perp$ with slope $v_i$ is carved out by the orthogonal hyperplanes to $v_{i-1},v_i,v_{i+1}$. Suppose that $v_{i-1},v_{i+1}$ form a $\Z$-basis for $\Z^2$. They are independent over $\Q$ and the case when they are not a $\Z$-basis is similar. By a change of coordinates, suppose $v_{i-1}=(1,0),v_{i+1}=(0,-1)$ and $v_i=(\alpha,\beta)$. Then the endpoints of the edge in $\Omega^\perp$ corresponding to $v_i$ are
$$\left(-a_{i-1},\frac{\alpha a_{i-1}-a_i}{\beta}\right)\text{ and }\left(-\frac{\beta a_{i+1}+a_i}{\alpha},a_{i+1}\right)$$
After rotating back, the $\Omega$-length of $v_i$ is then
$$\ell_\Omega(v_i)=\left|\begin{array}{cc}
\alpha & \beta \\
a_{i+1} & \frac{\beta a_{i+1}+a_i}{\alpha}
\end{array}\right|=a_i$$
which is the same as the corresponding coefficient in $D_\Omega$.
\end{proof}

\begin{cor} $L_{r\Omega}=h^0(rD_\Omega)$.
\end{cor}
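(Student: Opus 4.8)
The plan is to obtain the statement as an immediate consequence of the preceding lemma together with the lattice-point description of global sections on a toric variety. Concretely, I would chain three facts: (i) the polytope attached to $D_\Omega$ is the rotated domain $\Omega^\perp$; (ii) scaling a divisor scales its facet-presentation polytope, so that $P(rD_\Omega)=(r\Omega)^\perp$; and (iii) the number of lattice points in a polytope is invariant under the unimodular $90^\circ$ rotation, combined with the identification $h^0(rD_\Omega)=L_{P(rD_\Omega)}$ furnished by \cite{cls} Proposition 4.3.3.

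First I would verify the scaling property $P(rD_\Omega)=r\,P(D_\Omega)$. From the facet presentation, $P(D_\Omega)=\{v\in M_\R : \la u_\rho, v\ra \ge -a_\rho \text{ for all }\rho\}$ with $a_\rho=\ell_\Omega(v_\rho)$, so passing from $D_\Omega$ to $rD_\Omega$ replaces each bound $-a_\rho$ by $-ra_\rho$; since $v\mapsto rv$ is a bijection between the two inequality systems for $r>0$, this gives $P(rD_\Omega)=r\,P(D_\Omega)=r\,\Omega^\perp=(r\Omega)^\perp$, where the middle equality is the previous lemma. The case $r=0$ is the single point $\{\orig\}$ on both sides, matching $h^0=1=L_{\{\orig\}}$.

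Next I would apply the quoted lemma (\cite{cls} Proposition 4.3.3) to the Weil divisor $rD_\Omega$, obtaining $h^0(rD_\Omega)=\#\,P(rD_\Omega)\cap M=L_{(r\Omega)^\perp}$. Finally, the $90^\circ$ anticlockwise rotation is given by $\left(\begin{smallmatrix} 0 & -1 \\ 1 & 0\end{smallmatrix}\right)\in\GL(2,\Z)$, hence is a lattice automorphism and preserves lattice-point counts, so $L_{(r\Omega)^\perp}=L_{r\Omega}$. Combining these identities yields $h^0(rD_\Omega)=L_{r\Omega}$.

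The one point requiring care --- and the main potential obstacle --- is that $D_\Omega$ is only $\Q$-Cartier, not Cartier, so one cannot invoke a Cartier-only form of the section-counting statement naively. The resolution is that Proposition 4.3.3 of \cite{cls} computes $H^0$ of $\mO(D)$ for an \emph{arbitrary} torus-invariant Weil divisor on a normal toric variety, with basis indexed by $P(D)\cap M$, requiring no Cartier hypothesis; so long as $rD_\Omega$ has integer coefficients the identification applies verbatim. When $\Omega$ is a lattice domain this holds for every $r\in\Z_{\geq0}$, since $\ell_\Omega$ is then an integral support function and each $a_\rho=\ell_\Omega(v_\rho)\in\Z$; in the merely rational case one restricts to those $r$ clearing the denominators of the $a_\rho$.
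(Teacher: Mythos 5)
Your proposal is correct and follows exactly the route the paper intends: the corollary is stated without proof as an immediate consequence of the preceding lemma identifying $P(D_\Omega)$ with $\Omega^\perp$, together with the scaling $P(rD_\Omega)=rP(D_\Omega)$, the lattice-point count of sections from \cite{cls} Proposition 4.3.3, and invariance of lattice-point counts under the unimodular rotation. Your extra care about the $\Q$-Cartier issue (and the restriction to $r$ clearing denominators in the merely rational case) is a sensible clarification consistent with the paper's remark that $D_\Omega$ is Cartier when $\Omega$ is a lattice domain.
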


Notice that there are many choices of $\Omega$ with the same slope polytope $\on{Sl}(\Omega)$ and so to reflect the choice of $\Omega$ an extra choice has to be made in the geometry. This choice is a polarisation, where $\wt{Y}_{\on{Sl}(\Omega)}$ is polarised by the ample divisor $D_\Omega$. The same proof actually shows:

\begin{cor} \label{cor:lco} Let $\Lambda$ be a polygon with all edges parallel to edges of $\Omega$. Denote by $\Lambda^\perp$ the $90^\circ$ anticlockwise rotation of $\Lambda$ about the origin. The coefficients of a divisor $D_\Lambda$ on $\wt{Y}_{\on{Sl}(\Omega)}$ with polygon $\Lambda^\perp$ are
$$D_\Lambda=\sum\ell_\Lambda(v)D_v$$
with notation as above.
\end{cor}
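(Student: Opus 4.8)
The plan is to reprise the local computation in the proof that $P(D_\Omega)=\Omega^\perp$ verbatim, with $\Lambda$ substituted for $\Omega$, after first checking that $\ell_\Lambda$ functions as a support function on the very same fan $\wt{\Sigma}_{\on{Sl}(\Omega)}$. The statement then splits into two tasks: showing that $\ell_\Lambda$ determines a divisor $D_\Lambda$ on $\wt{Y}_{\on{Sl}(\Omega)}$ whose Weil coefficient along $D_v$ is $\ell_\Lambda(v)$, and identifying the polytope $P(D_\Lambda)$ with $\Lambda^\perp$.

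For the first task I would note that, since every edge of $\Lambda$ is parallel to an edge of $\Omega$, the slopes of $\Lambda$ form a subset of the slopes of $\Omega$. Consequently no slope of $\Lambda$ lies strictly between two adjacent slopes $v,v'$ of $\Omega$, so each maximal cone $\on{Cone}(v,v')$ of $\wt{\Sigma}_{\on{Sl}(\Omega)}$ is contained in a single maximal cone of the analogous slope fan of $\Lambda$ (the fan whose maximal cones are spanned by adjacent slopes of $\Lambda$). On that larger cone $\ell_\Lambda(w)=p\times w$ for the associated vertex $p$ of $\Lambda$, hence $\ell_\Lambda$ is linear on $\on{Cone}(v,v')$; running this over all adjacent pairs shows $\ell_\Lambda$ is an integral (resp. rational) support function for $\wt{\Sigma}_{\on{Sl}(\Omega)}$, exactly as in the lemma establishing this for $\ell_\Omega$. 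Defining $D_\Lambda$ via the support function $-\ell_\Lambda$ as in Definition \ref{def:bal} then yields a divisor whose Weil coefficient along $D_v$ is $\ell_\Lambda(v)$.

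For the second task I would rerun the earlier vertex computation with $\Lambda$ in place of $\Omega$: for three consecutive slopes placed in the normal form $v_{i-1}=(1,0)$, $v_{i+1}=(0,-1)$, $v_i=(\alpha,\beta)$, I read off the endpoints of the edge of $\Lambda^\perp$ orthogonal to $v_i$ in terms of the coefficients $a_{i-1},a_i,a_{i+1}$, rotate back by $90^\circ$ to recover the corresponding vertex of $\Lambda$, and evaluate the same cross-product determinant to obtain $\ell_\Lambda(v_i)=a_i$. This argument is purely local in three consecutive slopes and uses nothing about $\Omega$ beyond its slope directions, which $\Lambda$ shares, so it transfers unchanged and identifies the facet of $\Lambda^\perp$ orthogonal to $v$ with the hyperplane $\langle v,\cdot\rangle=-\ell_\Lambda(v)$ for every slope $v$ that is genuinely an edge direction of $\Lambda$.

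The step I expect to be the main obstacle is the slopes $v$ of $\Omega$ that do not occur as edge directions of $\Lambda$: there the corresponding facet of $\Lambda^\perp$ degenerates to a vertex, the inequality $\langle v,\cdot\rangle\geq-\ell_\Lambda(v)$ becomes redundant, and the local vertex computation has no edge to latch onto. Here the support-function viewpoint from the first task resolves matters cleanly, since $\ell_\Lambda$ is already linear across the cone of $\wt{\Sigma}_{\on{Sl}(\Omega)}$ containing $v$, with value $p\times(\cdot)$ for the unique vertex $p$ of $\Lambda$ at which the supporting line orthogonal to $v$ is attained. Evaluating at $v$ returns precisely the constant $a_v$ defining the supporting hyperplane $\langle v,\cdot\rangle=-a_v$ of $\Lambda^\perp$, so the coefficient $\ell_\Lambda(v)$ is correct on every ray, redundant or not, and $D_\Lambda=\sum_v\ell_\Lambda(v)D_v$ has polytope $\Lambda^\perp$ as claimed.
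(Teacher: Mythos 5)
Your proposal is correct and matches the paper's own treatment: the paper proves the corollary simply by remarking that ``the same proof'' as for $P(D_\Omega)=\Omega^\perp$ applies with $\Lambda$ in place of $\Omega$, which is exactly the rerun of the local vertex computation you describe. Your extra care with slopes of $\Omega$ that degenerate to vertices of $\Lambda$ (redundant but tight supporting hyperplanes) is a point the paper leaves implicit here and only addresses later in Lemma \ref{lem:tech}, but it does not change the method.
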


When $\Omega$ is lattice, $D_\Omega$ is Cartier. Cartier divisors can also be characterised by their Cartier data, which has a toric version found in \S4.2 of \cite{cls}. To this end, let $(a,b)^\perp:=(-b,a)$. This has the property that $-u\cdot v^\perp=u\times v$.

\begin{cor} The Cartier data for $D_\Lambda$ is $m_{\sigma_i}=q_i^\perp$, where $q_i$ is the vertex in common between the edges of slopes $v_i,v_{i+1}$, the vertices in $\Sigma(\Omega)$ bounding $\sigma_i$.
\end{cor}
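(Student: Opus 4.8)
The plan is to verify directly that the proposed collection satisfies the defining local equations for Cartier data. Recall from \S4.2 of \cite{cls} that for a torus-invariant Cartier divisor $D=\sum_\rho a_\rho D_\rho$ on $\wt{Y}_{\on{Sl}(\Omega)}$, the Cartier data is a collection $\{m_\sigma\}$ of lattice points $m_\sigma\in M$, one per maximal cone, characterised by $\langle m_\sigma,u_\rho\rangle=-a_\rho$ for every ray $\rho\in\sigma(1)$. Here the maximal cones are exactly the $\sigma_i=\on{Cone}(v_i,v_{i+1})$ for adjacent slopes, and by Corollary \ref{cor:lco} the coefficient of $D_\Lambda$ along the ray through $v_j$ is $a_{v_j}=\ell_\Lambda(v_j)$. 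So the whole statement reduces to checking that the candidate $m_{\sigma_i}:=q_i^\perp$ satisfies $\langle q_i^\perp,v_j\rangle=-\ell_\Lambda(v_j)$ for $j=i$ and $j=i+1$.

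First I would note that the common vertex $q_i$ lies on both the edge of slope $v_i$ and the edge of slope $v_{i+1}$ of $\Lambda$. Since the quantity $v\times p$ is constant as $p$ ranges over the supporting edge of direction $v$ (moving along that edge changes $p$ by a multiple of $v$, and $v\times v=0$), one may compute $\ell_\Lambda(v_i)$ and $\ell_\Lambda(v_{i+1})$ using the single point $p_{v_i}=p_{v_{i+1}}=q_i$, giving $\ell_\Lambda(v_j)=v_j\times q_i$ for $j=i,i+1$. Feeding this into the identity $-u\cdot w^\perp=u\times w$ recorded above then yields
$$\langle q_i^\perp,v_j\rangle=v_j\cdot q_i^\perp=-(v_j\times q_i)=-\ell_\Lambda(v_j)=-a_{v_j}$$
for both $j=i$ and $j=i+1$, which is precisely the defining relation. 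As $q_i$ is a lattice point when $\Lambda$ is a lattice polygon, $q_i^\perp\in M$, so this simultaneously confirms that $D_\Lambda$ is genuinely Cartier; moreover $q_i^\perp$ is the vertex of $\Lambda^\perp=P(D_\Lambda)$ attached to $\sigma_i$, in harmony with Corollary \ref{cor:lco}.

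The hard part is not the algebra but threading the conventions consistently. The clockwise orientation fixed on $\Omega$, the head/tail convention that singles out $q_i$ among the two endpoints of each edge, the definition $(a,b)^\perp=(-b,a)$, and the antisymmetry of the cross product all have to be aligned so that the two equations for $j=i$ and $j=i+1$ come out with identical signs; this is exactly the bookkeeping already exercised in the proof that $P(D_\Omega)=\Omega^\perp$, which is why the same argument applies. A secondary point needing care is the behaviour at inserted rays: when $v_i$ is a slope of $\Omega$ that is not a vertex of $\on{Sl}(\Omega)$, so that the ray through $v_i$ appears only after the blowup to $\wt{Y}_{\on{Sl}(\Omega)}$, one must check that the flanking vertex $q_i$ is still unambiguous. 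This holds because the supporting point $p_{v_i}$ remains a well-defined vertex of $\Lambda$ even when the corresponding edge degenerates, and working with the $\Omega$-stretching $S_\Omega\Lambda$ of Lemma \ref{lem:stretch}—which carries an edge of every slope of $\Omega$ without altering any $\ell_\Lambda(v_j)$—makes each $q_i$ explicit, so that no coefficient is lost in passing to the refined fan.
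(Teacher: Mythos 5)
Your proof is correct and follows the same route as the paper's (one-line) argument: both reduce the claim to the identity $v_j\times q_i=a_{v_j}$ established in the proof of the polytope lemma for $D_\Lambda$, and then apply $-u\cdot v^\perp=u\times v$ to get the defining relation $\langle q_i^\perp,v_j\rangle=-a_{v_j}$ of the Cartier data. Your write-up merely makes explicit what the paper leaves implicit — checking both rays $v_i,v_{i+1}$ of each maximal cone and justifying that the single vertex $q_i$ serves as the supporting point for both — so there is nothing to change.
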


\begin{proof} As seen, $v_i\times q_i=a_i$ and so $v_i\cdot q_i^\perp=-a_i$.
\end{proof}

The balance divisor also captures the $\Omega$-length by how it intersects other divisors. We will prove the following lemma in toric geometry to progress towards this.

\begin{lemma} \label{lem:tech} Let $X_\Sigma$ be a projective toric surface. An $\R$-divisor $D$ on $X_\Sigma$ is nef iff $D\cdot D_\rho$ equals the lattice length of the edge of $P(D)$ corresponding to $\rho$ for each ray $\rho\in\Sigma(1)$.
\end{lemma}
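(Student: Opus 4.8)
The plan is to deduce the equivalence from the toric Kleiman criterion together with the standard dictionary between intersection numbers on a toric surface and the geometry of the moment polytope $P(D)$. On a projective (hence complete) toric surface $X_\Sigma$ the torus-invariant prime divisors $D_\rho$ are themselves curves, and their classes generate the Mori cone; consequently a divisor is nef exactly when $D\cdot D_\rho\geq0$ for every $\rho\in\Sigma(1)$ (see \cite{cls}, Ch.~6). Since a lattice length is by definition non-negative, one implication is immediate: if $D\cdot D_\rho$ equals the lattice length of the edge attached to $\rho$ for every $\rho$, then in particular $D\cdot D_\rho\geq0$ for all $\rho$, so $D$ is nef. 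In contrapositive form, a non-nef $D$ satisfies $D\cdot D_{\rho_0}<0$ for some $\rho_0$, and a negative number can never equal the non-negative lattice length of the corresponding edge.

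For the forward implication I would first assume $D$ is Cartier and nef. On a complete toric variety nefness is equivalent to convexity of the support function $\ph_D$, and in that case $P(D)$ is the convex hull of the Cartier data $m_\sigma\in M$ attached to the maximal cones $\sigma\in\Sigma$, each $m_\sigma$ being determined by $\la m_\sigma,u_\rho\ra=-a_\rho$ for the two rays $\rho\subset\sigma$. A ray $\rho$ lies in exactly two maximal cones $\sigma_-,\sigma_+$, and the edge $E_\rho$ of $P(D)$ cut out by $\la u_\rho,\cdot\ra=-a_\rho$ is the segment $[m_{\sigma_-},m_{\sigma_+}]$ (possibly a single point). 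The cleanest way to identify its lattice length with $D\cdot D_\rho$ is by restriction: $D\cdot D_\rho=\deg\big(\mO(D)|_{D_\rho}\big)$, the curve $D_\rho$ is a copy of $\pr^1$ (the toric variety of the star of $\rho$), and the restriction of $\mO(D)$ to it is the line bundle whose associated polytope is precisely the face $E_\rho$. Since a degree-$d$ line bundle on $\pr^1$ has polytope a lattice segment of lattice length $d$, we conclude $D\cdot D_\rho=\ell(E_\rho)$. Concretely, this is the assertion that $m_{\sigma_+}-m_{\sigma_-}$, which lies in $\rho^\perp$, is exactly $(D\cdot D_\rho)$ times the primitive lattice vector along $\rho^\perp$.

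To pass from Cartier divisors to arbitrary nef $\R$-divisors I would argue by linearity on the nef cone. For a rational nef class, clearing denominators to an integral Cartier class multiplies both $D\cdot D_\rho$ and $\ell(E_\rho)$ by the same factor (using $P(kD)=kP(D)$), so the equality descends to every rational nef class. The points $m_\sigma(D)$ solve a fixed linear system in the coefficients $a_\rho$ and so depend linearly on $D$; hence both $D\mapsto D\cdot D_\rho$ and $D\mapsto\ell(E_\rho)$ are continuous (indeed piecewise-linear) on $N^1(X_\Sigma)_\R$. As the rational nef classes are dense in the rational polyhedral nef cone, the equality extends to all nef $\R$-divisors, and the easy implication above already handles the non-nef case for $\R$-divisors as well.

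I expect the main obstacle to be the forward implication, specifically the bookkeeping that matches $D\cdot D_\rho$ with $\ell(E_\rho)$ correctly. Two points must be treated precisely rather than hand-waved: (i) the sign conventions tying the support function $\ph_D$ (with its deliberate sign change relative to $\ell_\Omega$) to the coefficients $a_\rho$ and to the orientation of $E_\rho$ as one traverses $\bdry{P(D)}$; and (ii) the degenerate case $m_{\sigma_-}=m_{\sigma_+}$, where $E_\rho$ collapses to a point and must be read as lattice length $0$ matching $D\cdot D_\rho=0$. Both are harmless once spelled out, but they are where the argument needs care.
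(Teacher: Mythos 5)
Your proof is correct. The ``if'' direction is exactly the paper's: lattice lengths are nonnegative, and on a complete toric surface the classes of the invariant curves $D_\rho$ generate the Mori cone, so the toric Kleiman criterion applies. For the forward direction you take a genuinely different route. The paper first treats the ample case, where $P(D)$ has a unique irredundant facet presentation, so that $D=\sum_\rho\ell_\Lambda(u_\rho)D_\rho$ with $\Lambda$ the (un-rotated) polytope of $D$, and reads off the edge lengths from the explicit $2\times 2$ determinant computation done earlier for the balance divisor; the merely nef case is then reduced to showing $D\cdot D_\rho=0$ on the collapsed edges, which it delegates to a direct calculation via \cite{cls} Prop.\ 6.4.4. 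You instead restrict $\mO(D)$ to the invariant curve $D_\rho\cong\pr^1$ and identify the polytope of the restriction with the face $E_\rho=[m_{\sigma_-},m_{\sigma_+}]$; this handles zero-length and positive-length edges uniformly and avoids coordinates, at the cost of first assuming $D$ Cartier and nef (nefness is genuinely needed here, since the face-restriction statement uses global generation). Your subsequent passage to $\Q$- and then $\R$-divisors by homogeneity of $P(kD)=kP(D)$ and by linearity of the Cartier data $m_\sigma(D)$ on the (rational polyhedral) nef cone is sound, and is in fact the only place either argument explicitly addresses irrational nef classes, which the lemma as stated requires and the paper leaves implicit. The two caveats you flag --- sign conventions and the degenerate edge $m_{\sigma_-}=m_{\sigma_+}$ --- are indeed the points needing care, and your restriction argument disposes of the second one automatically (a degree-zero line bundle on $\pr^1$ has a point as its polytope).
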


\begin{proof} The if part is clear by the toric Kleiman condition. For the converse, observe that if $D$ is ample then there is a unique facet presentation of $\Lambda^\perp:=P(D)$ as every slope is represented by an edge in $P(D)$. This means that $D$ must be equal to
$$\sum\ell_\Lambda(u_\rho)D_\rho$$
adapting notation from Corollary \ref{cor:lco} and the result follows from the proof of that corollary. If $D$ is nef, then it must be the case that some of the inequalities in the facet presentation are only just redundant: that is, none of the hyperplanes have empty intersection with $P(D)$, but some might only intersect at a vertex. This follows as the interior of the nef cone is the ample cone, or from the description of nef and ample divisors in \cite{bat} Theorem 2.15 or \cite{cls} Theorem 6.4.9. It suffices to show that $D\cdot D_\rho=0$ for any $\rho$ giving a redundant hyperplane (that is, an edge of length $0$) but this follows from a direct calculation using \cite{cls} Prop. 6.4.4.
\end{proof}

Suppose that $\Lambda$ is a polygon with edges parallel to the edges of $\Omega$. As discussed above, there is a facet presentation of $\Lambda^\perp$ and so there is a nef divisor $D_\Lambda$ on $\wt{Y}_{\on{Sl}(\Omega)}$ with this as its polygon.

\begin{lemma} \label{lem:dot} $\ell_\Omega(\partial\Lambda)=D_\Lambda\cdot D_\Omega$.
\end{lemma}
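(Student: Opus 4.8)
The plan is to exploit the symmetry of the intersection pairing together with Lemma \ref{lem:tech}, rather than expanding $D_\Lambda$ directly. I would write $D_\Lambda\cdot D_\Omega=D_\Omega\cdot D_\Lambda$ and expand the \emph{balance} divisor in the toric boundary basis. By the corollary following Definition \ref{def:bal} the Weil coefficient of $D_\Omega$ along $D_v$ is exactly $\ell_\Omega(v)$, so $D_\Omega=\sum_v\ell_\Omega(v)D_v$, where the sum runs over the rays $v$ of $\wt{\Sigma}_{\on{Sl}(\Omega)}$, i.e. over the primitive slopes of $\Omega$.

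First I would compute $D_\Omega\cdot D_\Lambda=\sum_v\ell_\Omega(v)\,(D_\Lambda\cdot D_v)$. Since $D_\Lambda$ is nef with associated polytope $\Lambda^\perp$, Lemma \ref{lem:tech} identifies each $D_\Lambda\cdot D_v$ with the lattice length of the edge of $\Lambda^\perp$ cut out by the ray $v$. As $\Lambda^\perp$ is the image of $\Lambda$ under the rotation $(a,b)\mapsto(-b,a)$, which lies in $\GL_2(\Z)$ and hence preserves lattice length, this equals the lattice length of the edge of $\Lambda$ with slope $v$; write this as $\mu_v$ (and when $\Lambda$ has no edge in direction $v$, the redundant-hyperplane case of Lemma \ref{lem:tech} gives $D_\Lambda\cdot D_v=0$, matching a zero-length edge). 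Thus $D_\Omega\cdot D_\Lambda=\sum_v\ell_\Omega(v)\mu_v$.

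Second, I would match this expression with $\ell_\Omega(\partial\Lambda)$ computed directly. Because $\Lambda$ has edges parallel to those of $\Omega$ and carries the same clockwise orientation, each edge of $\partial\Lambda$ has primitive direction a slope $v$ of $\Omega$, with edge vector $\mu_v\cdot v$. The $\Omega$-length is positively homogeneous in its argument — the supporting point $p_v$ in $\ell_\Omega(v)=v\times p_v$ depends only on the direction of $v$, since scaling $v$ by a positive factor leaves the halfplane $p_v+\{u:u\times v\geq0\}$ unchanged — so $\ell_\Omega(\mu_v v)=\mu_v\,\ell_\Omega(v)$. Summing over the edges gives $\ell_\Omega(\partial\Lambda)=\sum_v\mu_v\,\ell_\Omega(v)$, which is precisely the quantity above, and combining the two computations yields $\ell_\Omega(\partial\Lambda)=D_\Lambda\cdot D_\Omega$.

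The step I expect to require the most care is the bookkeeping behind the identification $D_\Lambda\cdot D_v=\mu_v$: one must confirm that the ray $v$ of $\wt{\Sigma}_{\on{Sl}(\Omega)}$ indexing $D_v$ is matched, under the facet presentation of $\Lambda^\perp$ and the preceding lemma computing the polytope of $D_\Omega$ as $\Omega^\perp$, with the edge of $\Lambda$ in direction $v$ rather than some rotated or dual edge, and that the shared clockwise orientation makes $\Omega$ and $\Lambda$ genuinely assign the same primitive direction to parallel edges. Once this correspondence and the homogeneity of $\ell_\Omega$ are in place, the remainder is a formal manipulation of the (symmetric, bilinear) intersection product.
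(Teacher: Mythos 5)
Your proof is correct and follows essentially the same route as the paper: both expand $D_\Omega=\sum_v\ell_\Omega(v)D_v$, invoke Lemma \ref{lem:tech} to identify $D_\Lambda\cdot D_v$ with the lattice length of the corresponding edge, and use the positive homogeneity of $\ell_\Omega$ to rewrite $\ell_\Omega(\partial\Lambda)$ as $\sum_v\mu_v\,\ell_\Omega(v)$. The paper's version is terser (it starts from the perimeter and factors out $D_\Lambda$ at the end), but the content, including the bookkeeping you flag about matching rays to rotated edges, is identical.
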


\begin{proof} From Lemma \ref{lem:tech}, the lattice length of the edge of slope $v_i$ in $\Lambda$ is $D_\Lambda\cdot D_i$. The $\Omega$-length of the edge is thus $(D_\Omega\cdot D_i)\cdot\ell_\Omega(v_i)$. Summing all these up gives the $\Omega$-perimeter as
$$\ell_\Omega(\partial\Lambda)=\sum(D_\Lambda\cdot D_i)\cdot\ell_\Omega(v_i)=D_\Lambda\cdot\sum\ell_\Omega(v_i)D_i=D_\Lambda\cdot D_\Omega$$
as required.
\end{proof}

\begin{cor} $\ell_\Omega(\partial\Lambda)=\ell_\Lambda(\partial\Omega)$.
\end{cor}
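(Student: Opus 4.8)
The statement is symmetric in $\Omega$ and $\Lambda$, whereas Lemma \ref{lem:dot} breaks this symmetry by singling out $D_\Omega$ as the ample factor. My plan is therefore to interpret \emph{both} sides as intersection numbers on the single toric surface $\wt{Y}_{\on{Sl}(\Omega)}$ and then invoke the symmetry of the intersection form on a surface, so that the identity becomes the statement $D_\Lambda\cdot D_\Omega=D_\Omega\cdot D_\Lambda$.

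First I would observe that both divisors already live on $\wt{Y}_{\on{Sl}(\Omega)}$. Since $\Lambda$ has all edges parallel to edges of $\Omega$, every slope of $\Lambda$ is a ray of $\wt{\Sigma}_{\on{Sl}(\Omega)}$, so Corollary \ref{cor:lco} produces the nef divisor $D_\Lambda=\sum_v\ell_\Lambda(v)D_v$ with polytope $\Lambda^\perp$, the sum running over \emph{all} rays $v$ of the fan; those slopes of $\Omega$ not occurring in $\Lambda$ simply contribute an edge of lattice length $0$ to $\Lambda^\perp$. The balance divisor $D_\Omega$ is ample, hence nef, by the corollary following Definition \ref{def:bal}.

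The left-hand side is then handled by Lemma \ref{lem:dot} directly: $\ell_\Omega(\partial\Lambda)=D_\Lambda\cdot D_\Omega$. For the right-hand side I would rerun the computation in the proof of Lemma \ref{lem:dot} with the roles of $\Omega$ and $\Lambda$ interchanged, which is legitimate because it only requires $D_\Omega$ to be nef so that Lemma \ref{lem:tech} applies. Writing $v_1,\dots,v_s$ for the slopes of $\Omega$ with toric boundary divisors $D_{v_1},\dots,D_{v_s}$, Lemma \ref{lem:tech} identifies the lattice length of the edge of $\Omega$ of slope $v_i$ with $D_\Omega\cdot D_{v_i}$; multiplying by $\ell_\Lambda(v_i)$ and summing gives
\[
\ell_\Lambda(\partial\Omega)=\sum_i\bigl(D_\Omega\cdot D_{v_i}\bigr)\ell_\Lambda(v_i)=D_\Omega\cdot\sum_i\ell_\Lambda(v_i)D_{v_i}=D_\Omega\cdot D_\Lambda,
\]
using the description of $D_\Lambda$ from Corollary \ref{cor:lco} in the last step. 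Combining the two expressions with the symmetry $D_\Lambda\cdot D_\Omega=D_\Omega\cdot D_\Lambda$ of the intersection pairing on the surface $\wt{Y}_{\on{Sl}(\Omega)}$ yields the claim.

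The one point genuinely needing care — and the step I expect to be the main obstacle — is making sure the reverse computation is honestly symmetric when $\Lambda$ has strictly fewer edge directions than $\Omega$. This is exactly where one uses that $\wt{\Sigma}_{\on{Sl}(\Omega)}$ contains a ray for every slope of $\Omega$, so that $D_\Lambda$ carries a (possibly vanishing) coefficient against each $D_{v_i}$ and both sums range over the same index set of rays. Conceptually this corollary is just the symmetry of the mixed area $V(\Omega,\Lambda)=V(\Lambda,\Omega)$, consistent with $\ell_\Omega(\partial\Omega)=2\on{Vol}(\Omega)=2V(\Omega,\Omega)$, and the toric argument is precisely a way of reading off that symmetry from the symmetry of the surface intersection form.
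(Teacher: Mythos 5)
Your proposal is correct and is essentially the paper's intended argument: the corollary is stated as an immediate consequence of Lemma \ref{lem:dot}, obtained by running that lemma's computation a second time with the roles of $\Omega$ and $\Lambda$ exchanged and invoking the symmetry of the intersection pairing on $\wt{Y}_{\on{Sl}(\Omega)}$, exactly as you do. Your extra care about slopes of $\Omega$ absent from $\Lambda$ contributing zero-length edges (so that both sums range over all rays of $\wt{\Sigma}_{\on{Sl}(\Omega)}$) is a point the paper leaves implicit but is handled correctly here.
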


\subsection{Proof of Theorem \ref{thm:main3}} \label{sec:pf3}

We are now in a position to convert the definition of ECH capacities and cap functions into purely algebro-geometric language.

\begin{thm} \label{thm:transl} Suppose $\Omega$ is a rational convex domain. Then
\begin{align*}
&c_k(X_\Omega)=\underset{D}{\on{min}}\{D\cdot D_\Omega:h^0(\wt{Y}_{\on{Sl}(\Omega)},D)\geq k+1\} \\
&\on{cap}_{X_\Omega}(r)=\underset{D}{\on{max}}\{h^0(\wt{Y}_{\on{Sl}(\Omega)},D):D\cdot D_\Omega\leq r\}
\end{align*}
where both extrema range over all nef $\Q$- or $\R$-divisors $D$ on $\wt{Y}_{\on{Sl}(\Omega)}$.
\end{thm}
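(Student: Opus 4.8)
The plan is to transport the combinatorial formula of Theorem \ref{thm:opt} and its corollary across the dictionary built in this section. For a polygon $\Lambda$ whose edges are parallel to those of $\Omega$, Corollary \ref{cor:lco} attaches a nef divisor $D_\Lambda$ on $\wt{Y}_{\on{Sl}(\Omega)}$ whose polytope is the $90^\circ$ rotation $\Lambda^\perp$; since this rotation preserves $\Z^2$, the section count gives $h^0(\wt{Y}_{\on{Sl}(\Omega)},D_\Lambda)=L_{\Lambda^\perp}=L_\Lambda$, while Lemma \ref{lem:dot} gives $D_\Lambda\cdot D_\Omega=\ell_\Omega(\partial\Lambda)$. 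Conversely, by Lemma \ref{lem:tech} every nef $\R$-divisor $D$ arises this way, its polytope $\Lambda^\perp$ having all edges orthogonal to the rays of $\wt{\Sigma}_{\on{Sl}(\Omega)}$, i.e. $\Lambda$ has edges parallel to $\Omega$. I would first observe that it suffices to prove the capacity identity: since $\on{cap}_{X_\Omega}(r)=\#\{k:c_k(X_\Omega)\leq r\}$, and the relation $c_k(X_\Omega)=\underset{D}{\on{min}}\{D\cdot D_\Omega:h^0(D)\geq k+1\}$ makes $c_k(X_\Omega)\leq r$ equivalent to the existence of a nef $D$ with $D\cdot D_\Omega\leq r$ and $h^0(D)\geq k+1$, a one-line counting argument converts the $c_k$-formula into the $\on{cap}$-formula.

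For the inequality $\underset{D}{\on{min}}\{D\cdot D_\Omega:h^0(D)\geq k+1\}\leq c_k(X_\Omega)$, take a convex lattice domain $\Lambda$ realising the minimum in Theorem \ref{thm:opt}, so $L_\Lambda=k+1$ and $\ell_\Omega(\partial\Lambda)=c_k(X_\Omega)$. Apply $\Omega$-stretching: by Lemma \ref{lem:stretch} the polygon $S_\Omega\Lambda$ has edges parallel to $\Omega$ and the same $\Omega$-perimeter, and since $\Lambda\subseteq S_\Omega\Lambda$ we get $L_{S_\Omega\Lambda}\geq L_\Lambda=k+1$. The associated nef $\Q$- or $\R$-divisor $D_{S_\Omega\Lambda}$ then satisfies $h^0(D_{S_\Omega\Lambda})=L_{S_\Omega\Lambda}\geq k+1$ and $D_{S_\Omega\Lambda}\cdot D_\Omega=\ell_\Omega(\partial S_\Omega\Lambda)=c_k(X_\Omega)$, giving the required bound.

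For the reverse inequality I would start from an arbitrary nef $\R$-divisor $D$ with $h^0(D)\geq k+1$; its polytope un-rotates to a convex polygon $\Lambda$ with edges parallel to $\Omega$, with $L_\Lambda=h^0(D)\geq k+1$ and $\ell_\Omega(\partial\Lambda)=D\cdot D_\Omega$. The task is to produce a convex lattice domain $\Lambda'$ with $L_{\Lambda'}\geq k+1$ and $\ell_\Omega(\partial\Lambda')\leq\ell_\Omega(\partial\Lambda)$; then Theorem \ref{thm:opt} yields $c_k(X_\Omega)\leq\ell_\Omega(\partial\Lambda')\leq D\cdot D_\Omega$. I would first pass to $\Lambda_0=\conv{\Lambda\cap\Z^2}$, which keeps every lattice point of $\Lambda$ and, being contained in the convex set $\Lambda$, has $\ell_\Omega(\partial\Lambda_0)\leq\ell_\Omega(\partial\Lambda)$. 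Here I use two features of the $\Omega$-perimeter: it depends only on the edge vectors of its argument, hence is translation invariant, and it is monotone under inclusion of convex bodies — both visible from writing $\ell_\Omega(\partial\Lambda)$ as a sum of terms $v\times p_v$, or by recognising it as twice a mixed volume of $\Lambda$ with a fixed rotate of $\Omega$.

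The step I expect to be the main obstacle is the final normalisation: $\Lambda_0$ is a lattice polygon with the correct count and perimeter, but a convex lattice domain must be anchored at the origin with its two lower edges on the coordinate axes. Because the slopes of $\Omega$ include the horizontal and vertical directions, the polytope $\Lambda$ already carries a horizontal and a vertical edge, so it is a translate of a convex domain; the difficulty is that the real translation needed to seat it at the origin does not preserve $L$, whereas the convex-hull operation that controls the lattice count can destroy those two edges. I would resolve this by interleaving the two reductions — stretch, take the convex hull of lattice points, and translate by a lattice vector — and checking that one can always dominate $\Lambda_0$ by a genuine convex lattice domain of no greater $\Omega$-perimeter and no fewer lattice points, using monotonicity to absorb the enlargement down to the axes. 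Establishing this domination cleanly, so that the optimisation over all nef $\Q$- or $\R$-divisors is provably attained within the class of convex lattice domains, is the crux of the argument.
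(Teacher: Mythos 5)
Your dictionary (Corollary \ref{cor:lco}, Lemma \ref{lem:dot}, $h^0=L$), your reduction of the cap formula to the $c_k$ formula, and your treatment of the inequality $\on{min}\leq c_k(X_\Omega)$ via $\Omega$-stretching all match the paper's proof. For the reverse inequality you take a genuinely different route: the paper first shows the infimum is \emph{attained} (a compactness argument in the nef cone) and then exploits minimality --- if some edge of $P(D)$ contained no lattice point one could push that facet inward, keeping $h^0$ and strictly decreasing $D\cdot D_\Omega$ --- so that in particular the two distinguished edges of slopes $-e_1$ and $e_2$ each contain a lattice point, forcing their common corner to be integral; a \emph{lattice} translation then anchors $\conv{\Lambda\cap\Z^2}$ at the origin and $S_\Omega\Lambda'=\Lambda$ gives equality of $\Omega$-perimeters. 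You instead work with an arbitrary nef $D$ and invoke monotonicity of $\ell_\Omega(\partial\cdot)$ under inclusion of convex bodies (valid, since $\ell_\Omega(\partial Q)$ is twice a mixed area with a rotate of $\Omega$, though the paper never uses this); that would pleasantly sidestep the attainment argument.

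However, the step you yourself flag as the crux --- passing from $\Lambda_0=\conv{\Lambda\cap\Z^2}$ to a genuine convex lattice domain eligible for Theorem \ref{thm:opt} --- is a real gap, and the mechanism you propose for closing it points the wrong way: since $0\in\Omega$, every $\ell_\Omega(v)$ is nonnegative and $\ell_\Omega(\partial\cdot)$ is monotone \emph{increasing} under inclusion, so ``dominating $\Lambda_0$ by a convex lattice domain'' and ``absorbing the enlargement using monotonicity'' cannot both happen; enlarging down to the axes can only increase the $\Omega$-perimeter, and the fact that it happens not to is a structural accident that must be argued, not a consequence of monotonicity. A correct completion along your lines is: let $(\alpha,\beta)$ be the corner of $\Lambda$ where its $-e_1$ and $e_2$ edges meet, replace $\Lambda$ by $\Lambda\cap\{x\geq\lceil\alpha\rceil,\ y\geq\lceil\beta\rceil\}$ translated by the integer vector $-(\lceil\alpha\rceil,\lceil\beta\rceil)$ --- this loses no lattice points, does not increase $\ell_\Omega(\partial\cdot)$ (each surviving edge is a subsegment of an old one and the new axis edges have $\ell_\Omega=0$), and yields a convex domain anchored at the origin; then use that convex domains are downward-closed in the first quadrant, so their lattice points form a staircase set whose convex hull automatically has its two edges at the origin along the axes, i.e.\ is a convex lattice domain. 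Without the integral shaving and the downward-closedness observation (or the paper's minimality argument), the optimisation over nef divisors has not been shown to be dominated by the optimisation over convex lattice domains, and the proof is incomplete.
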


\begin{proof} Since intersection with $D_\Omega$ describes the $\Omega$-length and the number of lattice points enclosed equals $h^0$, the only thing to check is that the extrema ranging over nef $\Q$- or $\R$-divisors is equivalent to ranging over convex lattice paths. We will focus on the real case from which it will be clear why the minima are achieved by rational nef divisors. We use nef divisors to ensure that each `edge length' $D\cdot D_i$ is nonnegative. Note that the two equalities in the theorem are equivalent and so we will focus only on the first. For convenience denote
$$c_k^\text{alg}(\wt{Y}_{\on{Sl}(\Omega)})=\on{inf}\{D\cdot D_\Omega:h^0(\wt{Y}_{\on{Sl}(\Omega)},D)\geq k+1\}$$
Note that a minimum really is attained. Indeed, pick a nef $\R$-divisor $D_\star$ with at least $k+1$ global sections. Then $c_k^\text{alg}(\wt{Y}_{\on{Sl}(\Omega)})\leq D_\star\cdot D_\Omega$ and the infimum is the same if we take it over all nef $\R$-divisors with $h^0(\wt{Y}_{\on{Sl}(\Omega)},D)\geq k+1$ and $D\cdot D_\Omega\leq D_\star\cdot D_\Omega$. Observe that this extra condition places an upper bound on each of the (nonnegative) lattice lengths of edges of the polygon $P(D)$ for such $D$. This infimum thus takes place over a compact region inside the (closed) nef cone and is therefore realised by some divisor.
\\

Suppose that $D=D_\Lambda$ realises this minimum. Its (rotated) polygon $\Lambda$ must have a lattice point on every edge as otherwise one could perturb the coefficient in the facet presentation for an edge with no lattice point to obtain a divisor with the same number of global sections but smaller intersection with $D_\Omega$. Notice that this implies that $D$ is a $\Q$-divisor. Let $\Lambda'$ be the convex hull of all lattice points in $\Lambda$. Note that $k'+1= L_\Lambda=L_{\Lambda'}$ for some $k'\geq k$. Then $S_\Omega\Lambda'=\Lambda$ by construction (as we assumed that $\Lambda$ has a lattice point on each edge) and so by Lemma \ref{lem:stretch} and Lemma \ref{lem:dot} we have $\ell_\Omega(\partial\Lambda')=\ell_\Omega(\partial S_\Omega\Lambda)=\ell_\Omega(\partial\Lambda)=D\cdot D_\Omega$. Now we will show that, potentially after translation, $\partial\Lambda'$ is a convex lattice path in the sense of Definition \ref{def:path}.
\\

$\Lambda$ has two distinguished (possible length $0$) edges of slopes $-e_1$ and $e_2$ by construction of $\on{Sl}(\Omega)$ that meet at a point $p_0$. For these edges to each contain a lattice point, they must each be subsets of affine lines of the form $(y=\beta)$ and $(x=\alpha)$ respectively for some $\alpha,\beta\in\Z$. Hence $p_0=(\alpha,\beta)\in\Z^2$ is a lattice point. We can thus use this lattice point to translate $\Lambda$ back to the origin without changing the pairing with $D_\Omega$ (the $\Omega$-length) or the dimension of global sections. By convexity $\Lambda'$ thus also contains two adjacent edges with slopes $-e_1$ and $e_2$. Since $\Lambda$ has slopes parallel to the slopes of $\Omega$ and is convex, the boundary of $\Lambda$ forms a convex rational path in the sense of Definition \ref{def:path}. It follows that the boundary of $\Lambda'$ forms a convex lattice path and hence features in the minimum of Theorem \ref{thm:opt} giving the combinatorial formula for $c_{k'}(X_\Omega)$. Consequently,
$$c_k(X_\Omega)\leq c_{k'}(X_\Omega)\leq\ell_\Omega(\partial\Lambda')=\ell_\Omega(\partial\Lambda)=D\cdot D_\Omega=c_k^\text{alg}(\wt{Y}_{\on{Sl}(\Omega)})$$
For the converse inequality, suppose that $\Lambda$ is a lattice polygon whose boundary $\partial\Lambda$ is a convex lattice path realising the minimum of Theorem \ref{thm:opt}. That is, $c_k(X_\Omega)=\ell_\Omega(\partial\Lambda)$ and $L_\Lambda=k+1$. Then $\Xi=S_\Omega\Lambda$ is a rational polygon with edges parallel to the edges of $\Omega$, which hence defines a nef $\Q$-divisor $D_\Xi$ on $\wt{Y}_{\on{Sl}(\Omega)}$. Now, using Lemma \ref{lem:stretch} and Lemma \ref{lem:dot},
$$c_k(X_\Omega)=\ell_\Omega(\partial\Lambda)=\ell_\Omega(\partial S_\Omega\Lambda)=D_\Xi\cdot D_\Omega$$
Notice that $S_\Omega\Lambda$ contains at least as many lattice points as $\Lambda$ and so $h^0(\wt{Y}_{\on{Sl}(\Omega)},D_\Xi)\geq k+1$ giving
$$c_k^\text{alg}(\wt{Y}_{\on{Sl}(\Omega)})\leq D_\Xi\cdot D_\Omega=c_k(X_\Omega)$$
which supplies the converse inequality.
\end{proof}

Notice that $c_k^\text{alg}(\wt{Y}_{\on{Sl}(\Omega)})$ uses $h^0\geq k+1$ instead of equality (as in the original optimisation problem for ECH capacities in Theorem \ref{thm:opt}) because there might not be divisors on $\wt{Y}_{\on{Sl}(\Omega)}$ with $k+1$ sections; for example, there are no divisors $D$ on $\pr^2$ with $h^0(\pr^2,D)=2$. Combinatorially, this comes from the fact that the lattice paths in Definition \ref{def:path} are allowed any rational slopes whereas the paths coming from divisors in Theorem \ref{thm:transl} must have edges parallel to edges of $\Omega$.

\subsection{A speculative digression}

Observe that one can try to define for any pair of a projective surface $Y$ and an ample divisor $A$ on $Y$
$$c_k^\text{alg}(Y,A):=\underset{\on{Nef}(Y)_\R}{\on{inf}}\{D\cdot A:h^0(Y,D)\geq k+1\}$$
taking the infimum again over the nef cone. It would be interesting to explore whether some of these sequences interact with symplectic capacities for other kinds of symplectic $4$-manifold, or to study the structure of their associated cap functions. We speculate that these cap functions are eventually quasipolynomial when $Y$ is an orbifold.

\subsection{Reformulation in terms of the inner normal fan}

There is another fan one can associate to a polytope $P$ now living in $M_\R$ called the \textit{inner normal fan} $\Sigma(P)$, which consists of cones in $N_\R$. For a polygon $P\subset\R^2$, this is the fan with rays generated by inward-pointing normals to each of the faces and with all two-dimensional cones between them included. Observe that, after picking a basis as we implicitly did above, the fan $\wt{\Sigma}_{\on{Sl}(\Omega)}$ for the blowup $\wt{Y}_{\on{Sl}(\Omega)}$ of $Y_{\on{Sl}(\Omega)}$ is the $90^\circ$ anticlockwise rotation of $\Sigma(\Omega)$: taking slopes is dual to taking normals.
\\

Completely analogously, we obtain a toric variety $Y_{\Sigma(Q)}$ that is isomorphic to the previous toric variety $\wt{Y}_{\on{Sl}(\Omega)}$ with an ample divisor $D_\Omega$ whose coefficient along the divisor $D_\rho$ is $\ell_\Omega(v)$, where $\rho$ is the ray generated by a normal to the edge of slope $v$. $(Y_{\Sigma(\Omega)},D_\Omega)$ has the same intersection theoretic and cohomological properties as the pair $(\wt{Y}_{\on{Sl}(\Omega)},D_\Omega)$ and so the results of the previous subsections exactly cross over to this setting.

\begin{thm} \label{thm:reform}
Suppose $X_\Omega$ is a rational convex toric domain. Then
\begin{align*}
c_k(X_\Omega)=\on{min}\{D\cdot D_\Omega:h^0(Y_{\Sigma(\Omega)},D)\geq k+1\} \\
\on{cap}_{X_\Omega}(r)=\on{max}\{h^0(Y_{\Sigma(\Omega)},D):D\cdot D_\Omega\leq r\}
\end{align*}
where both extrema range over all nef $\Q$- or $\R$-divisors on $Y_{\Sigma(\Omega)}$.
\end{thm}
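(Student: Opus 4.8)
The plan is to deduce Theorem \ref{thm:reform} directly from Theorem \ref{thm:transl} by transporting the entire optimisation problem across a toric isomorphism $Y_{\Sigma(\Omega)}\cong\wt{Y}_{\on{Sl}(\Omega)}$ that identifies the two balance divisors. The mechanism is precisely the observation recorded just before the statement: after fixing bases to identify $N_\R$ and $M_\R$ with $\R^2$, the $90^\circ$ anticlockwise rotation is a lattice automorphism (an element of $\on{GL}_2(\Z)$) relating the fan $\wt{\Sigma}_{\on{Sl}(\Omega)}$ to the inner normal fan $\Sigma(\Omega)$, sending the ray through a slope $v$ of $\Omega$ to the ray through the inward normal of the corresponding edge. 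First I would check that this rotation carries the rays of $\wt{\Sigma}_{\on{Sl}(\Omega)}$ bijectively onto the rays of $\Sigma(\Omega)$ and respects the cone structure, so that it induces an equivariant isomorphism $\Phi\colon \wt{Y}_{\on{Sl}(\Omega)}\to Y_{\Sigma(\Omega)}$ taking each boundary divisor $D_v$ to the boundary divisor attached to the normal of the edge of slope $v$.

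Next I would verify that $\Phi^{*}D_\Omega=D_\Omega$, i.e.\ that the two a priori different constructions of the balance divisor agree under $\Phi$. By Definition \ref{def:bal} and the corollary computing its Weil coefficients, the coefficient of $D_\Omega$ along $D_v$ in the slope picture is $\ell_\Omega(v)$; by the construction preceding the statement, the coefficient of $D_\Omega$ along the corresponding normal divisor in the inner normal fan picture is the \emph{same} number $\ell_\Omega(v)$. Since $\Phi$ matches these two boundary divisors ray by ray, one balance divisor pulls back to the other. Because $\Phi$ is an isomorphism of polarised toric surfaces, it preserves all intersection numbers and all dimensions of cohomology, and it restricts to a bijection of nef cones; hence both the constraint $D\cdot D_\Omega\leq r$ and the section count $h^0$ are invariant under transporting a divisor $D$ across $\Phi$.

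Finally, since $\Phi$ sets up a bijection between nef $\Q$- or $\R$-divisors on the two varieties that simultaneously preserves $D\cdot D_\Omega$ and $h^0$, the two extremal problems defining the right-hand sides of Theorem \ref{thm:reform} coincide verbatim with those of Theorem \ref{thm:transl}. The stated equalities then follow immediately. I do not expect a genuine obstacle here: the content lives entirely in Theorem \ref{thm:transl}, and the only point requiring care is confirming that the coefficient $\ell_\Omega(v)$ is attached to matching rays on the two sides, that is, that taking slopes really is dual to taking normals under the chosen identification. This is exactly what the rotation $v\mapsto v^{\perp}$ accomplishes, so no computation beyond the lemmas already established is needed.
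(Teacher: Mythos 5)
Your proposal is correct and follows essentially the same route as the paper, which likewise treats Theorem \ref{thm:reform} as an immediate transport of Theorem \ref{thm:transl} across the equivariant isomorphism $\wt{Y}_{\on{Sl}(\Omega)}\cong Y_{\Sigma(\Omega)}$ induced by the $90^\circ$ rotation identifying slopes with normals and matching the two balance divisors coefficient by coefficient. (The only nitpick: to carry slopes to \emph{inward} normals of the clockwise-oriented $\Omega$ the rotation goes clockwise, i.e.\ $\wt{\Sigma}_{\on{Sl}(\Omega)}$ is the \emph{anticlockwise} rotation of $\Sigma(\Omega)$, but this orientation convention does not affect the argument.)
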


We remark that the advantage of the inner normal fan in this context is its familiarity as a standard object of toric algebraic geometry, however the approach via slope polytopes is quite pleasing and may have better duality properties if a general explanation for this phenomenology via mirror symmetry exists. For the sake of familiarity and consistency with the introduction, we will continue to use $\Sigma(\Omega)$ instead of $\wt{\Sigma}_{\on{Sl}(\Omega)}$ for the remainder of the paper.

\subsection{Free convex toric domains} \label{sec:free}

One can also consider the situation when $\Omega\subset\R^2$ is a convex body that doesn't intersect the coordinate axes, which is where fibres of the moment map decrease in dimension and pick up nontrivial isotropy. We call such $X_\Omega$ \textit{free convex toric domains}. This was one of the situations originally considered in \cite{h11}. There is an analogous theorem there to Theorem \ref{thm:opt}. To state it, we define for such $\Omega$ a new pseudonorm $\ell_\Omega^{v_\star}$ depending on a vector $v_\star\in\Omega^\circ$ as follows. Consider $\Omega'=\Omega-v_\star$. This is now a polygon with the origin in its interior. We consider the norm $||\cdot||_{\Omega'}$ whose unit ball is $\Omega'$ and its dual norm on $(\R^2)^*$
$$||\phi||^*_{\Omega'}:=\on{max}\{\phi(v):v\in\Omega'\}$$
We identify $(\R^2)^*$ with $\R^2$ via the dot product, giving
$$||u||_{\Omega'}^*:=\on{max}\{u\cdot v:v\in\Omega'\}$$
Define the length in this pseudonorm of a polygonal path $\psi$ consisting of line segments $v_1,\dots,v_r$ to be
$$\ell_{\Omega}^{v_\star}(\psi):=\sum_{i=1}^r||v_i||_{\Omega'}^*$$

\begin{lemma}[\cite{h14}, Exercise 4.13] The length of closed polygonal paths measured in $\ell_\Omega^{v_\star}$ is independent of $v_\star$.
\end{lemma}

We denote the restriction of $\ell_\Omega^{v_\star}$ to closed polygonal paths by $\ell_\Omega'$ to indicate its independence of $v_\star$.

\begin{thm}[\cite{h11}, Theorem 1.11] \label{thm:free_hutch} Suppose $
\Omega\subset\R^2$ is a polygon that does not intersect either coordinate axis so that $X_\Omega$ is a free convex toric domain. Then
$$c_k(X_\Omega)=\on{min}\{\ell_\Omega'(\partial\Lambda):L_\Lambda=k+1\}$$
where the minimum ranges over lattice polygons $\Lambda$.
\end{thm}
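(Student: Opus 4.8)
This is \cite{h11}, Theorem 1.11, so a complete proof is given there; what follows is a sketch of the embedded contact homology computation that underlies it. The plan is to read $c_k(X_\Omega)$ directly off the ECH of the contact boundary $Y=\bdry{X_\Omega}=\mu^{-1}(\bdry\Omega)$. Because $\Omega$ avoids the coordinate axes the torus acts freely on $Y$, so $Y$ is a $T^2$-bundle over $\bdry\Omega\cong S^1$ with monodromy governed by the edge normals of $\Omega$. After a Morse--Bott perturbation, each edge $e$ of $\Omega$ with rational edge-direction $v_e$ contributes a circle of Reeb orbits, and the simple orbit in direction $v_e$ has symplectic action $||v_e||_{\Omega'}^*$, which is exactly the support-function term appearing in $\ell_\Omega'$.

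First I would fix the combinatorial model for the ECH generators. An admissible orbit set is a collection of Reeb orbits with multiplicities whose total class in $H_1(Y)$ is the prescribed one; in the torus-bundle picture this closing-up condition says that the chosen edge-vectors, scaled by their multiplicities, sum to zero. Such data is precisely a closed lattice polygon $\Lambda$ whose edge vectors are $(\text{multiplicity})\cdot v_e$. Under this bijection the total symplectic action of the generator is $\sum_e(\text{mult})\,||v_e||_{\Omega'}^*=\ell_\Omega'(\bdry\Lambda)$, directly from the definition of $\ell_\Omega'$.

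Next I would compute the ECH index $I$ of the generator attached to $\Lambda$. Using the index formula $I=c_\tau+Q_\tau+\sum\on{CZ}_\tau$ together with the standard toric trivialisation, the relative first Chern and relative self-intersection terms assemble into a lattice-point count, so that after the usual normalisation $I=2(L_\Lambda-1)$; hence index $2k$ corresponds exactly to $L_\Lambda=k+1$. Finally, $c_k(X_\Omega)$ is the spectral invariant of the $k$-th class in the $U$-sequence of $ECH(Y)$, and since $U$ lowers the index by $2$ this equals the minimal action among index-$2k$ generators lying in that sequence. Checking that the minimal-action index-$2k$ generator is a cycle that survives to homology and represents the correct class then yields $c_k(X_\Omega)=\on{min}\{\ell_\Omega'(\bdry\Lambda):L_\Lambda=k+1\}$.

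The hard part will be the last two steps. Combinatorialising the index as $2(L_\Lambda-1)$ is delicate: the enclosed-lattice-point count is produced by the relative intersection pairing $Q_\tau$, and pinning down the normalisation (the analogue of Pick's theorem bookkeeping for the genuinely closed polygon $\Lambda$, which is not anchored to the axes as in Theorem \ref{thm:opt}) requires care. Equally, one must show that no component of the ECH differential cancels the minimal-action generator, so that the spectral invariant is genuinely realised by it; this is the homological input, and together with the index computation it constitutes the technical core carried out in \cite{h11}.
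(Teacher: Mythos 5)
The paper offers no proof of this statement: it is imported verbatim from \cite{h11}, Theorem 1.11, exactly as you note. Your sketch faithfully mirrors the structure of Hutchings' actual argument --- the boundary $\mu^{-1}(\partial\Omega)$ is a (trivial) $T^2$-bundle over $S^1$, i.e.\ $T^3$, whose ECH is the combinatorial ``rounding corners'' model of Hutchings--Sullivan in which generators correspond to convex lattice polygons, the action is the dual-norm length $\ell_\Omega'$, and the grading is governed by enclosed lattice points --- and you correctly locate the technical core (the index computation $I=2(L_\Lambda-1)$ and the survival of the minimal-action generator in the $U$-tower) in the cited work. The only imprecision is attributing the dependence on $\Omega$ to the ``monodromy'' of the bundle: the bundle is topologically trivial, and it is the contact form and Reeb dynamics, not the topology of $Y$, that encode the edges of $\Omega$; this does not affect the argument.
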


As discussed in \cite{h14} Exercise 4.16 it is equivalent to take the minimum over all polygons with edges parallel to edges of $\Omega$ and with no constraints on their vertices with the modification that $L_\Lambda\geq k+1$.

\begin{thm} \label{thm:free}
Suppose $X_\Omega$ is a free rational convex toric domain. Then,
\begin{align*}
c_k(X_\Omega)=\on{min}\{D\cdot D_\Omega:h^0(Y_{\Sigma(\Omega)},D)\geq k+1\} \\
\on{cap}_{X_\Omega}(r)=\on{max}\{h^0(Y_{\Sigma(\Omega)},D):D\cdot D_\Omega\leq r\}
\end{align*}
where $D_\Omega$ is the balance divisor from Definition \ref{def:bal} and where both extrema range over all nef $\Q$- or $\R$-divisors on $Y_{\Sigma(\Omega)}$.
\end{thm}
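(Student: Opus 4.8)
The plan is to run the argument behind Theorems \ref{thm:transl} and \ref{thm:reform} essentially verbatim, feeding in Hutchings' free formula (Theorem \ref{thm:free_hutch}) in place of Theorem \ref{thm:opt}, and bridging the two length functions $\ell_\Omega$ and $\ell_\Omega'$ by a single lattice rotation. First I would check that the toric input survives unchanged for a free $\Omega$: since $\Omega$ is a full-dimensional polygon its inner normal fan $\Sigma(\Omega)$ is complete, $\ell_\Omega$ remains a (rational) support function for it and $-\ell_\Omega$ is strictly convex, so the balance divisor $D_\Omega$ of Definition \ref{def:bal} is still well defined and ample. Lemmas \ref{lem:tech} and \ref{lem:dot} were proved for arbitrary projective toric surfaces and polygons with edges parallel to $\Omega$, so they apply as they stand; combining them with the normal-fan reformulation underlying Theorem \ref{thm:reform} gives, for every nef $\Q$- or $\R$-divisor $D$ on $Y_{\Sigma(\Omega)}$,
$$h^0(D)=L_{P(D)}\qquad\text{and}\qquad D\cdot D_\Omega=\ell_\Omega(\partial P(D)),$$
with $P(D)$ a convex polygon whose edges are parallel to the edges of $\Omega$.

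The one genuinely new ingredient is the comparison of $\ell_\Omega'$ with $\ell_\Omega$. Let $J\in\GL_2(\Z)$ denote clockwise rotation by $90^\circ$, $J(a,b)=(b,-a)$, so that $(Ju)^\perp=u$ for every $u$. For a closed path $\ell_\Omega'$ is independent of $v_\star$, so unwinding the definition gives $\ell_\Omega'(\partial\Lambda)=\sum_i\on{max}\{v_i\cdot p:p\in\Omega\}$ summed over the edge vectors $v_i$ of $\Lambda$, while $\ell_\Omega(u)=\on{max}\{u\times p:p\in\Omega\}=\on{max}\{u^\perp\cdot p:p\in\Omega\}$. Since the edge vectors of $J\Lambda$ are the $Jv_i$ and $(Jv_i)^\perp=v_i$, comparing the two sums yields the key identity $\ell_\Omega'(\partial\Lambda)=\ell_\Omega(\partial J\Lambda)$. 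As $J$ is a lattice automorphism it preserves lattice-point counts, $L_{J\Lambda}=L_\Lambda$, and carries lattice polygons bijectively to lattice polygons. Substituting $\Pi=J\Lambda$ into Theorem \ref{thm:free_hutch} (and using monotonicity of the $c_k$ to pass from $L_\Lambda=k+1$ to $L_\Lambda\geq k+1$) rewrites the free capacity as
$$c_k(X_\Omega)=\on{min}\{\ell_\Omega(\partial\Pi):\Pi\text{ a lattice polygon},\ L_\Pi\geq k+1\},$$
which is precisely the optimisation problem solved in the proof of Theorem \ref{thm:transl}, only with the convex-domain (axis-touching) hypothesis on $\Pi$ dropped.

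From here the two inequalities go through as before. For ``$\leq$'', take an optimal $\Pi$ (which may be taken convex, as the $\Omega$-perimeter is monotone under inclusion of convex polygons): its $\Omega$-stretching $S_\Omega\Pi$ has edges parallel to $\Omega$, satisfies $\ell_\Omega(\partial S_\Omega\Pi)=\ell_\Omega(\partial\Pi)$ by Lemma \ref{lem:stretch}, and contains at least as many lattice points, so it defines a nef divisor $D$ with $P(D)=S_\Omega\Pi$, $D\cdot D_\Omega=c_k(X_\Omega)$ and $h^0(D)\geq k+1$. For ``$\geq$'', any nef $D$ produces $P(D)$, whose lattice-point convex hull $\Pi'=\on{conv}(P(D)\cap\Z^2)$ is a lattice polygon with $L_{\Pi'}=h^0(D)\geq k+1$ and $\ell_\Omega(\partial\Pi')\leq\ell_\Omega(\partial P(D))=D\cdot D_\Omega$, so $c_k(X_\Omega)\leq D\cdot D_\Omega$; attainment of the extremum over the closed nef cone is justified by the same compactness argument (bounding each edge length $D\cdot D_\rho$) as in Theorem \ref{thm:transl}. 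Crucially, no translation step is needed here: Theorem \ref{thm:free_hutch} imposes no constraint pinning $\Lambda$ against the coordinate axes, and both $\ell_\Omega$ (on closed paths) and lattice-point counts are translation-invariant. The cap-function identity then follows from the capacity identity exactly as it does for Theorem \ref{thm:opt}.

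The main obstacle is the rotation bookkeeping in the identity $\ell_\Omega'(\partial\Lambda)=\ell_\Omega(\partial J\Lambda)$: one must verify that replacing the dual-norm length $\ell_\Omega'$ by the cross-product length $\ell_\Omega$ corresponds to the lattice rotation $J$ rather than to a reflection or to $J^{-1}$, either of which would leave the parallelism classes misaligned. Concretely, the $\ell_\Omega'$-optimal polygons are parallel to $J^{-1}\Omega$ whereas the polytopes $P(D)$ are parallel to $\Omega$, and it is exactly the identity above, together with the lattice-preservation of $J$, that reconciles them ($\Lambda\parallel J^{-1}\Omega$ is sent to $J\Lambda\parallel\Omega=P(D)$). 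Getting the orientation conventions straight ($\Omega$ oriented clockwise, the sign of $\perp$, and hence the direction of $J$) so that this emerges as $J$ and not its inverse is the delicate point; once it is settled, everything else is a transcription of the bounded case.
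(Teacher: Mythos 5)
Your proposal is correct and follows essentially the same route as the paper: your rotation identity $\ell'_\Omega(\partial\Lambda)=\ell_\Omega(\partial J\Lambda)$ with $J(a,b)=(b,-a)$ is exactly the paper's correspondence $\Lambda\mapsto-\Lambda^\perp$ (since $-v^\perp=(b,-a)=Jv$), derived from the same pointwise computation $\|v\|^*_{\Omega'}=\ell_{\Omega'}(-v^\perp)$, after which both arguments reduce to the optimisation of Theorem \ref{thm:transl} with the axis-translation step dropped. The only cosmetic difference is that the paper keeps the translate $\Omega'$ throughout and concludes with $D\cdot D_\Omega=D\cdot D_{\Omega'}$, whereas you absorb the translation at the start via invariance of $\ell_\Omega$ on closed loops.
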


\begin{proof} As before, the two equalities are equivalent and so we will only show the first. Let $v_\star\in\Omega^\circ$ and set $\Omega'=\Omega-v_\star$. Suppose that $u_1,u_2$ are outward normals to adjacent faces of $\Omega$. The dual norm $||\cdot||_{\Omega'}^*$ is linear on $\on{Cone}(u_1,u_2)$, since the maximum of $v\cdot-$ will be achieved (possibly non-uniquely) at the vertex shared between the two adjacent edges for any $v\in\on{Cone}(u_1,u_2)$. It is hence a support function on the outer normal fan $\Sigma^-(\Omega)$, which is just the negative of the inner normal fan. Notice that for $v\in\on{Cone}(u_1,u_2)$, the dual norm $||v||_{\Omega'}^*=v\cdot p$ where $p$ is the vertex described above, but this is equal to $-v^\perp\times p$ by definition. Note that $p$ is exactly the point of $\partial\Omega'$ at which $-v^\perp$ is tangent to $\partial\Omega'$ so that $p=p_v$ as in the definition of $\Omega'$-length in \S\ref{sec:comb}. Hence,
$$||v||_{\Omega'}^*=\ell_{\Omega'}(-v^\perp)$$
It follows that
\begin{align*}
c_k(X_\Omega)&=\on{min}\{\ell'_\Omega(\partial\Lambda):L_\Lambda=k+1\} \\
&=\on{min}\{\ell_{\Omega'}(\partial\Xi):L_\Xi=k+1\}
\end{align*}
via the correspondence $\Lambda\mapsto-\Lambda^\perp$, where both minima range over all lattice polygons $\Lambda$ or $\Xi$ respectively. But by a similar (actually simpler) argument to the proof of Theorem \ref{thm:transl}, this second minimum can be seen to be equal to $\on{min}\{D\cdot D_{\Omega'}:h^0(Y_{\Sigma(\Omega)},D)\geq k+1\}$. Now $\Omega'$ is just a translate of $\Omega$ and so $D\cdot D_\Omega=D\cdot D_{\Omega'}$ for all divisors $D$, which gives the result.
\end{proof}

We finally observe that all of the machinery developed above works equally well when $\Omega$ is an irrational polygon with rational slopes, since rationality is only required on the level of edges to define a fan that will produce a toric variety. The only difference is that $D_\Omega$ will no longer be a $\Q$-divisor.

\section{Computing cap functions}

The aim of this section is to define `tightly constrained' convex domain and to prove the following theorem.

\begin{thm} \label{thm:cap} Suppose $\Omega$ is a tightly constrained convex lattice domain. Then, there exists $x_0\in\Z_{\geq0}$ such that for all $x\geq x_0$ and for each $r=0,\dots,\lambda-1$, 
$$\on{cap}_{X_\Omega}(r+\lambda x)=\on{ehr}_\Omega(x)+rx+\gamma_r$$
for some constant $\gamma_r\in\Z$ depending only on $r$. If $\Omega$ has a weight equal to $1$ then $\Omega$ is tightly constrained and moreover one can choose $x_0=0$.
\end{thm}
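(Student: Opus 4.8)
The plan is to run everything through the algebro-geometric reformulation of Theorem \ref{thm:reform}, which gives $\on{cap}_{X_\Omega}(s)=\on{max}\{h^0(Y_{\Sigma(\Omega)},D):D\cdot D_\Omega\le s\}$ with $D$ ranging over nef divisors, together with Riemann--Roch on $Y:=Y_{\Sigma(\Omega)}$. After passing to a toric resolution if $Y$ is singular (which changes neither the relevant slice of the nef cone nor the lattice-point counts), Demazure vanishing gives $h^i(D)=0$ for $i>0$ when $D$ is nef, so Riemann--Roch reads $h^0(D)=1+\tfrac12 D^2-\tfrac12 D\cdot K_Y$; equivalently this is the two-dimensional Ehrhart expansion $\on{Area}(P(D))+\tfrac12(\text{lattice perimeter})+1$, using $\on{Area}(P(D))=\tfrac12 D^2$ and the fact from Lemma \ref{lem:tech} that the lattice length of the $\rho$-edge is $D\cdot D_\rho$. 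Since $\lambda=\ell_\Omega(\partial\Omega)=D_\Omega^2$ and $\on{ehr}_\Omega(x)=h^0(xD_\Omega)=1+\tfrac\lambda2 x^2-\tfrac x2 D_\Omega\cdot K_Y$, I have a clean polynomial to compare against.

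First I would substitute $D=xD_\Omega+E$, so that the budget constraint $D\cdot D_\Omega\le r+\lambda x$ becomes $E\cdot D_\Omega\le r$, and a direct expansion gives
$$h^0(D)-\on{ehr}_\Omega(x)=x\,(E\cdot D_\Omega)+\tfrac12 E^2-\tfrac12 E\cdot K_Y.$$
The entire problem is now to maximise the right-hand side over integral $E$ with $E\cdot D_\Omega\le r$ and $xD_\Omega+E$ nef. The structural input is the Hodge index theorem: the intersection form on $\on{NS}(Y)_\R$ has exactly one positive direction, and $D_\Omega$ is ample, so the form is negative definite on the orthogonal complement $D_\Omega^\perp=\{E:E\cdot D_\Omega=0\}$. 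Consequently $E\mapsto\tfrac12 E^2-\tfrac12 E\cdot K_Y$ is strictly concave on every affine hyperplane parallel to $D_\Omega^\perp$.

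Next I would pin down $\gamma_r$. The linear term $x\,(E\cdot D_\Omega)$ forces the optimiser to spend the full budget $E\cdot D_\Omega=r$, and this is exactly where the tightly constrained hypothesis enters: it guarantees that the value $r$ is realised by some integral nef $E$ (conjecturally the gcd-of-weights condition, and certainly so when a weight equals $1$), so that the coefficient of $x$ is precisely $r$ and not some smaller achievable residue. With $E\cdot D_\Omega=r$ fixed, strict concavity makes
$$\gamma_r:=\on{max}\{\tfrac12 E^2-\tfrac12 E\cdot K_Y:\ E\cdot D_\Omega=r\}$$
a finite quantity attained on a bounded set of lattice classes $E_r$ that is independent of $x$. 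Choosing $x_0$ large enough that $xD_\Omega+E_r$ is nef (possible since $D_\Omega$ is ample) yields the lower bound $\on{cap}_{X_\Omega}(r+\lambda x)\ge\on{ehr}_\Omega(x)+rx+\gamma_r$. When a weight equals $1$ one expects to choose $E_r$ so that $xD_\Omega+E_r$ is already nef for every $x\ge0$ (the optimal shapes nest outward from the origin), giving $x_0=0$; evaluating the formula at $x=0$ and using $\on{ehr}_\Omega(0)=1$ then forces $\gamma_r=\on{cap}_{X_\Omega}(r)-1$.

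The matching upper bound is the step I expect to be the main obstacle, since one must rule out nef divisors lying far from the ray $\R_{\ge0}D_\Omega$. Writing $\tau=D\cdot D_\Omega-\lambda x\le r$ and $D'=D-\tfrac{D\cdot D_\Omega}{\lambda}D_\Omega$, the difference becomes
$$h^0(D)-\on{ehr}_\Omega(x)=\tau x+\tfrac{\tau^2}{2\lambda}-\tfrac{\tau}{2\lambda}D_\Omega\cdot K_Y+\tfrac12(D')^2-\tfrac12 D'\cdot K_Y,$$
with $(D')^2\le0$ by Hodge index. Under-spending the budget ($\tau<r$) costs $(r-\tau)x$, a loss of order $x$, whereas the concave orthogonal term $\tfrac12(D')^2-\tfrac12 D'\cdot K_Y$ is bounded above by its fixed, $x$-independent unconstrained maximum; hence for $x\ge x_0$ the optimum genuinely has $\tau=r$ with $D'$ equal to the stabilised maximiser, matching the lower bound. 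Making this uniform — controlling the competition from the $O(\sqrt{x})$-large feasible region for $D'$ via strict negative-definiteness on $D_\Omega^\perp$, handling integrality so that every residue is realised, and verifying the $x_0=0$ claim in the weight-one case — is where the tightly constrained hypothesis does its real work and is the delicate part of the argument.
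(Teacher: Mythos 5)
Your route is genuinely different from the paper's. The paper never touches Riemann--Roch here: it runs the cap function through the weight-sequence decomposition of Lemma \ref{lem:sharp}, reducing $\on{cap}_{X_\Omega}(r)$ to the quadratic integer program of maximising $C(\delta,\alpha_i,\beta_j)=\tfrac12\delta(\delta+3)-\sum\tfrac12\alpha_i(\alpha_i+1)-\sum\tfrac12\beta_j(\beta_j+1)$ subject to $\delta c-\sum\alpha_ia_i-\sum\beta_jb_j\le r$, proves a shift lemma (adding $(c,a_i,b_j)$ to an optimiser for budget $r$ gives an optimiser for budget $r+\lambda$), and solves the resulting difference equation $\on{cap}(r+\lambda(x+1))=\on{cap}(r+\lambda x)+r+\lambda x+L_\Omega-1$. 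Your Hodge-index argument is an attractive geometric substitute for that shift lemma, and it has a side benefit the paper doesn't exploit: it essentially identifies ``tightly constrained'' with the condition that $\gcd\{D_\rho\cdot D_\Omega\}=1$, i.e.\ that the gcd of the lattice edge lengths of $\Omega$ is $1$, which speaks to Conjecture \ref{conj:wt}. The cost is that you must police the passage from the optimisation of Theorem \ref{thm:reform} (over nef $\R$-divisors on a possibly singular $Y_{\Sigma(\Omega)}$, where $h^0$ means a lattice-point count) to the set of integral Cartier divisors on a smooth model where $h^0(D)=1+\tfrac12D^2-\tfrac12D\cdot K$ actually holds. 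This is true but not automatic: the optimal divisors are the $\Omega$-stretchings $S_\Omega\Lambda'$ of lattice polygons, which are integral with respect to the primitive normals of $\Omega$ because each supporting line passes through a lattice point, but after subdividing the fan to resolve singularities the new rays can produce non-integral coefficients, and rounding down a nef divisor need not preserve nefness. You need to stretch along the \emph{resolved} fan to stay integral and nef; as written, ``maximise over integral $E$'' silently replaces the problem.

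The genuine gap is the final clause of the theorem. Your framework never sees the weight sequence, so the claims ``a weight equal to $1$ implies $\Omega$ is tightly constrained'' and ``one can then take $x_0=0$'' are not proved: the sentence ``one expects to choose $E_r$ so that $xD_\Omega+E_r$ is already nef for every $x\ge0$'' is a hope, not an argument, and nothing in the Hodge-index picture explains why having a weight equal to $1$ (a property of the recursive ball decomposition of $\Delta_c\setminus\Omega$) should force the maximisers to nest from $x=0$ onward. The paper proves exactly this via Lemma \ref{lem:wt}: if some $a_1=1$, any optimiser with slack in the constraint can be improved by decrementing $\alpha_1$, so the constraint is tight for every $r\ge0$, and the difference equation then runs from $x=0$. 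You would need either to import that combinatorial lemma wholesale (at which point the weight-sequence machinery re-enters) or to find a divisor-theoretic translation of ``has a weight equal to $1$,'' which you have not supplied. Secondarily, your upper bound is only sketched; the convexity-in-$\tau$ argument does close it for $x\gg0$ (under-spending by $r-\tau$ costs $(r-\tau)x$ against a uniformly bounded orthogonal gain, and $\tau\ge-\lambda x$ by nefness), but that needs to be written out, including the uniformity over the unboundedly many integral values of $\tau$ as $x\to\infty$.
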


In order to do so, we will study the combinatorics of $\Omega$ in terms of its weight sequence, and then use this data to compute the cap function recursively. We will discuss the tightly constrained assumption on $\Omega$ and how every convex toric lattice domain conjecturally reduces to this case.

\subsection{Combinatorics of weight sequences}

Recall that the weight sequence associated to a convex domain $\Omega$ consists of a number and two lists that we will write as $(c;a_i;b_i)$. We will assume that the lists are finite sets of integers, which implies that $\Omega$ is rational. From the asymptotics of capacities of convex domains,
$$\on{Vol}(\Omega_2)=\on{Vol}(\amalg_i B(a_i))=\frac{1}{2}\sum a_i^2$$
and so
$$\ell_\Omega(\partial\Omega)=2\on{Vol}(\Omega)=\on{Vol}(B(c))-\on{Vol}(\amalg_i B(a_i))-\on{Vol}(\amalg_i B(b_i))=c^2-\sum a_i^2-\sum b_i^2$$
Consider now the number of lattice points enclosed by a concave domain, excluding those on the upper boundary. Each ball $B(b_i)$ contributes $\frac{1}{2}b_i(b_i+1)$ lattice points; note that the transformation realising the inductive description of the weight sequence is a special affine linear map and so preserves lattice point counts. Hence the number of lower lattice points (i.e. excluding the upper boundary) in $\Omega_3$ is
$$\sum\frac{1}{2}b_i(b_i+1)$$
and thus the number of lattice points enclosed by $\Omega$ is
$$\frac{1}{2}(c+1)(c+2)-\sum\frac{1}{2}\alpha_i(\alpha_i+1)-\sum\frac{1}{2}b_j(b_j+1)$$
For future reference will note that this is equal to
$$1+\frac{1}{2}c(c+3)-\sum\frac{1}{2}\alpha_i(\alpha_i+1)-\sum\frac{1}{2}b_j(b_j+1)$$

\subsection{Reducing the problem}

For a convex domain $\Omega$ with weight sequence $w(\Omega)=(c;a_i;b_j)$, Lemma \ref{lem:sharp} gives that the ECH capacities of $X_\Omega$ are given by
$$c_k(X_\Omega)=\on{min}\{c_{k+k_2+k_3}(B(c))-c_{k_2}(\amalg_i B(a_i))-c_{k_3}(\amalg_j B(b_j)):k_2,k_3\in\Z_{\geq0}\}$$
By the disjoint union property of capacities, this is equal to
$$c_k(X_\Omega)=\on{min}\{c_{k+\sum_i k_i+\sum_j m_j}(B(c))-\sum_i c_{k_i}(B(a_i))-\sum_j c_{m_j}(B(b_j)):k_i,m_j\in\Z_{\geq0}\}$$
It follows that the cap function of $X_\Omega$ is given by
$$\on{cap}_{X_\Omega}(r)=1+\on{max}\{k:\exists k_i,m_j\text{ with }c_{k+\sum_i k_i+\sum_j m_j}(B(c))-\sum_i c_{k_i}(B(a_i))-\sum_j c_{m_j}(B(b_j))\leq r\}$$
The capacities of a ball $B(q)$ take the form
$$c_k(B(q))=dq\text{ when $\frac{1}{2}d(d+1)\leq k\leq\frac{1}{2}\delta(\delta+3)$}$$
Hence, to maximise $k$, one may assume that $k_i=\frac{1}{2}\alpha_i(\alpha_i+1)$, $m_j=\frac{1}{2}\beta_j(\beta_j+1)$, and $k+\sum_i k_i+\sum_j m_j=\frac{1}{2}\delta(\delta+3)$ for some $\alpha_i,\beta_j,\delta$. Therefore $\on{cap}_{X_\Omega}(r)$ is $1$ plus the maximum of
$$C(\delta,\alpha_i,\beta_j):=\frac{1}{2}\delta(\delta+3)-\sum_i\frac{1}{2}\alpha_i(\alpha_i+1)-\sum_j\frac{1}{2}\beta_j(\beta_j+1)$$
subject to
$$\delta c-\sum_i \alpha_ia_i-\sum_j \beta_ib_i\leq r$$
where $\alpha_i,\beta_j,\delta$ range over nonnegative integers.

\subsection{Final calculations}

\begin{lemma} Fix a weight sequence $(c;a_1,\dots,a_s;b_1,\dots,b_t)$ and let $\lambda=c^2-\sum a_i^2-\sum b_i^2$. Suppose $(\delta,\alpha_i,\beta_i)$ maximises $C(\delta,\alpha_i,\beta_j)$ subject to
$$\delta c-\sum_i \alpha_ia_i-\sum_j \beta_jb_j=r$$
Then the sequence $(\delta+c,\alpha_i+a_i,\beta_i+b_i)$ maximises $C(\delta',\alpha_i',\beta_i')$ subject to
$$\delta'c-\sum_i \alpha_i'b_i-\sum_j \beta_j'b_j=r+\lambda$$
\end{lemma}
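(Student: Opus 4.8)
The plan is to reduce the optimality claim to a single algebraic identity: that translating any feasible point by the weight vector $(c,a_i,b_j)$ changes the objective $C$ by a quantity that depends only on $r$, and not on the chosen point. First I would check that the translation respects the constraints. Substituting $(\delta+c,\alpha_i+a_i,\beta_j+b_j)$ into the linear form $\delta c-\sum_i\alpha_i a_i-\sum_j\beta_j b_j$ adds exactly $c^2-\sum_i a_i^2-\sum_j b_j^2=\lambda$, so a point satisfying the level-$r$ constraint is carried to one satisfying the level-$(r+\lambda)$ constraint; nonnegativity of the shifted coordinates is immediate since all of $c,a_i,b_j$ are nonnegative.

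The crux is then a direct expansion of $C(\delta+c,\alpha_i+a_i,\beta_j+b_j)-C(\delta,\alpha_i,\beta_j)$. The cross terms assemble precisely into $\delta c-\sum_i\alpha_i a_i-\sum_j\beta_j b_j$, which equals $r$ by hypothesis; the pure weight-squared terms assemble into $\tfrac12\lambda$; and the remaining linear terms contribute a constant fixed by the weight sequence. The net result is
\[
C(\delta+c,\alpha_i+a_i,\beta_j+b_j)-C(\delta,\alpha_i,\beta_j)=r+\tfrac12\lambda+\tfrac32 c-\tfrac12\textstyle\sum_i a_i-\tfrac12\sum_j b_j=:\kappa_r,
\]
which is independent of the particular feasible point at level $r$. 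Writing $M_r$ for the maximum of $C$ over the level-$r$ constraint, this identity immediately yields the easy direction: the translate of a level-$r$ optimizer is a feasible competitor at level $r+\lambda$, so $M_{r+\lambda}\geq M_r+\kappa_r$.

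The reverse inequality is what actually forces optimality of the shifted point, and it is where the work lies. The natural move is to subtract the weight vector from an arbitrary feasible point at level $r+\lambda$ and invoke the constant-increment identity in reverse: a competitor at level $r+\lambda$ with objective exceeding $M_r+\kappa_r$ would descend to a feasible point at level $r$ beating $M_r$, a contradiction. \textbf{The main obstacle is nonnegativity}: subtracting $(c,a_i,b_j)$ may drive some coordinate below zero, so the descended point need not be admissible, and the argument only works if some optimizer at level $r+\lambda$ satisfies $\delta'\geq c$, $\alpha_i'\geq a_i$, and $\beta_j'\geq b_j$. I would establish exactly this domination. The guiding heuristic is the first-order picture: the continuous optimizer has $\delta=\mu c-\tfrac32$, $\alpha_i=\mu a_i-\tfrac12$, $\beta_j=\mu b_j-\tfrac12$ for a Lagrange multiplier $\mu$, and the constraint forces $\mu\lambda=r+\tfrac32 c-\tfrac12\sum_i a_i-\tfrac12\sum_j b_j$, so passing from level $r$ to level $r+\lambda$ raises $\mu$ by exactly $1$ and hence raises each coordinate by exactly its weight. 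Since the level-$r$ coordinates are nonnegative, the level-$(r+\lambda)$ coordinates dominate the weights, placing an optimizer in the image of the translation. Making this rigorous at the integer level is the delicate step I expect to be hardest: I would either run a direct exchange argument showing that any integer optimizer with some $\alpha_{i_0}'<a_{i_0}$ (or $\delta'<c$) can be locally improved, or restrict to the large-$r$ regime, where the feasibility bound $\delta' c\geq r+\lambda$ already forces the coordinates to be large enough—consistent with the threshold $x_0$ appearing in the main theorem.
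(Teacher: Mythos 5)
Your proposal follows essentially the same route as the paper: the paper's proof is exactly the descent argument you describe for the reverse inequality, namely supposing a competitor $(\delta',\alpha_i',\beta_j')$ at level $r+\lambda$ beats the translate, subtracting the weight vector, and expanding $C(\delta'-c,\alpha_i'-a_i,\beta_j'-b_j)$ to contradict maximality at level $r$; your constant-increment identity $\kappa_r=r+\tfrac12\lambda+\tfrac32c-\tfrac12\sum a_i-\tfrac12\sum b_j$ is a cleaner packaging of the same computation, and your check of it is correct. The one substantive difference is that you explicitly flag the nonnegativity obstruction in the descent step, which the paper's proof passes over in silence: the descended point $(\delta'-c,\alpha_i'-a_i,\beta_j'-b_j)$ satisfies the level-$r$ constraint and has larger objective, but nothing in the written argument guarantees its coordinates are nonnegative integers, so it need not be an admissible competitor and the contradiction does not immediately follow. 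You have therefore identified a genuine gap that is present in the source as well as in your own sketch; your two proposed repairs (a local exchange argument ruling out optimizers with $\alpha_{i_0}'<a_{i_0}$ or $\delta'<c$, or restricting to the large-$r$ regime where feasibility forces the coordinates to dominate the weights) are both plausible and the latter is consistent with the threshold $x_0$ appearing in Theorem \ref{thm:cap}, but neither is carried out, so the lemma is not fully proved as written -- by you or by the paper.
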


\begin{proof} Suppose there exists $(\delta',\alpha_i',\beta_j')$ with $C(\delta',\alpha_i',\beta_j')>C(\delta+c,\alpha+a_i,\beta_j+b_j)$. We will show that $C(\delta'-c,\alpha_i'-a_i,\beta_j'-b_j)>C(\delta,\alpha_i,\beta_j)$, contradicting maximality since
$$(\delta'-c)c-\sum(\alpha_i'-a_i)a_i-\sum(\beta_j'-b_j)b_j=r$$
For convenience, relabel the $b_j$ as $a_{s+j}$ and $\beta_j$ as $\alpha_{s+j}$ and write $C(\delta,\alpha_i)=C(\delta,\alpha_i,\beta_j)$.
Compute $2C(\delta'-c,\alpha_i'-a_i)$ to be
\begin{align*}
&(\delta'-c)(\delta'-c+3)-\sum (\alpha_i'-a_i)(\alpha_i'-a_i+1) \\
&=\delta'(\delta'+3)-\sum \alpha_i'(\alpha_i'+1)-c\delta'+\sum\alpha_i'a_i-c(\delta'+3)+\sum(\alpha_i'+1)a_i+c^2-\sum a_i^2 \\
&= \delta'(\delta'+3)-\sum \alpha_i'(\alpha_i'+1)-(r+\lambda)-(r+\lambda)-3c+\sum a_i+\lambda \\
&>(\delta+c)(\delta+c+3)-\sum (\alpha_i+a_i)(\alpha_i+a_i+1)-2r-\lambda-3c+\sum a_i \\
&=\delta(\delta+3)-\sum \alpha_i(\alpha_i+1)+c\delta-\sum \alpha_ia_i+c(\delta+3)-\sum(\alpha_i+1)a_i+c^2-\sum a_i^2-2r-\lambda-3c+\sum a_i \\
&=C(\delta,\alpha_i,\beta_j)+r+r+3c-\sum a_i+\lambda-2r-\lambda-3c+\sum a_i \\
&=C(\delta,\alpha_i,\beta_j)
\end{align*}
as desired.
\end{proof}

\begin{definition} \label{def:tight} Say that a convex lattice domain $\Omega$ (or a convex lattice toric domain $X_\Omega$) with weight sequence $(c;a_i;b_i)$ is \textit{tightly constrained with lower bound $r_0$} if for all $r\geq r_0$ the maximum of $C(\delta,\alpha_i,\beta_j)$ subject to 
$$\delta c-\sum_i \alpha_ia_i-\sum_j \beta_ib_i\leq r$$
is attained by some $(\delta,\alpha_i,\beta_j)$ with
$$\delta c-\sum_i \alpha_ia_i-\sum_j \beta_ib_i=r$$
Say that $\Omega$ is \textit{tightly constrained} if it is tightly constrained with some lower bound $r_0$.
\end{definition}

\begin{lemma} $\Omega$ being tightly constrained with lower bound $r_0$ is equivalent to the statement that for every positive integer $r\geq r_0$ there is some $k\in\Z_{\geq0}$ such that $c_k(X_\Omega)=r$.
\end{lemma}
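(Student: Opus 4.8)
The plan is to translate both conditions into statements about the single non-decreasing step function
$$P(r):=\on{cap}_{X_\Omega}(r)-1=\on{max}\{C(\delta,\alpha_i,\beta_j):\delta c-\sum_i\alpha_ia_i-\sum_j\beta_jb_j\le r\},$$
which is the reformulation of the cap function established in the subsection ``Reducing the problem''. Writing $g(\delta,\alpha_i,\beta_j):=\delta c-\sum_i\alpha_ia_i-\sum_j\beta_jb_j$ for the constraint function, both $C$ and $g$ take integer values, and since the capacities $c_k(X_\Omega)$ are integers the relation $c_k\le r\iff P(r)\ge k$ gives $P(r)-P(r-1)=\#\{k:c_k(X_\Omega)=r\}$ for each integer $r\ge1$. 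Hence the right-hand condition ``there is $k$ with $c_k(X_\Omega)=r$'' is \emph{exactly} $P(r)>P(r-1)$, and the whole statement reduces to: $\Omega$ is tightly constrained with lower bound $r_0$ if and only if $P(r)>P(r-1)$ for every integer $r\ge\on{max}\{1,r_0\}$.

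To prove this I would first record the pointwise picture. Setting $P_=(r):=\on{max}\{C:g=r\}$, the identity $P(r)=\on{max}\{P(r-1),P_=(r)\}$ shows that the maximum over $\{g\le r\}$ is attained at a point with $g=r$ precisely when $P_=(r)\ge P(r-1)$; thus ``tightly constrained at $r$'' means $P_=(r)\ge P(r-1)$, while ``some $c_k=r$'' means $P_=(r)>P(r-1)$. The easy direction (capacities hitting every value implies tight constraint) is then immediate, since $>$ implies $\ge$: if $P(r)>P(r-1)$ then no feasible point with $g\le r-1$ can be optimal, so every maximiser has $g=r$. The substantive content is upgrading $\ge$ to $>$ uniformly on $[r_0,\infty)$, i.e.\ ruling out a \emph{flat boundary point} where $P_=(r)=P(r-1)$ is realised both at $g=r$ and at some $g<r$.

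The main obstacle is exactly this upgrade, and I would attack it with the two structural inputs already available. Because $\lambda=c^2-\sum a_i^2-\sum b_j^2=2\on{Vol}(\Omega)>0$, a Cauchy--Schwarz estimate makes $C$ strictly concave along the constraint hyperplanes, so the continuous envelope is strictly convex and supplies eventual strict growth. More usably, the preceding lemma on periodicity of maximisers gives $P_=(r+\lambda)=P_=(r)+r+\tfrac12(\lambda+3c-\sum a_i-\sum b_j)$, whence the increments $\iota(r):=P_=(r)-P_=(r-1)$ satisfy $\iota(r+\lambda)=\iota(r)+1$. Under the tight-constraint hypothesis $P_=$ agrees with the running maximum $P$ on $[r_0,\infty)$ and is therefore non-decreasing there, so $\iota(r)\ge0$ for $r>r_0$; feeding this into the recursion yields $\iota(r)=\iota(r-\lambda)+1\ge1$ for all $r\ge r_0+\lambda$, which is the desired strict increase. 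The step I expect to be hardest is closing the finite window $r_0\le r<r_0+\lambda$: one must show that the hypothesis, holding for \emph{all} $r\ge r_0$, already forbids flat boundary points there (equivalently, that the admissible $r_0$ lies beyond every level at which $\iota$ vanishes). I would do this by pushing the recursion $\iota(r+\lambda)=\iota(r)+1$ backwards and matching a would-be flat against the non-negativity of $\iota$ on $[r_0,\infty)$, forcing a violation of the tight-constraint condition at or above $r_0$ and so contradicting the choice of $r_0$. Finally, the supplementary claim that a weight equal to $1$ forces $r_0=0$ falls out directly: if some $a_i=1$ (or $b_j=1$) then decreasing that variable adjusts $g$ by single units, so $P$ strictly increases at every positive integer and $\Omega$ is tightly constrained with $r_0=0$.
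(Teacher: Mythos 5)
Your translation of both sides into the step function $P(r)=\on{max}\{C:g\le r\}$ is correct and usefully explicit: with $P_=(r)=\on{max}\{C:g=r\}$, tightness at $r$ reads $P_=(r)\ge P(r-1)$ while ``$r$ is a capacity'' reads $P_=(r)>P(r-1)$, and your proof that the second implies the first is complete --- this is essentially all that the paper's own short proof establishes. The gap is the converse, and neither of your two tools closes it. First, the recursion $\iota(r+\lambda)=\iota(r)+1$ rests on the exact periodicity $P_=(r+\lambda)=P_=(r)+r+K$; the preceding lemma only yields the inequality $P_=(r+\lambda)\ge P_=(r)+r+K$ unless the maximiser at level $r+\lambda$ has all coordinates at least $(c,a_i,b_j)$ (its proof subtracts that vector), and a one-sided inequality applied at levels $r$ and $r-1$ gives no lower bound on $\iota(r+\lambda)$. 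Second, even granting the recursion, your backward induction over the window $r_0\le r<r_0+\lambda$ fails for exactly the reason you suspect: $\iota(r)=0$ forces $\iota(r-\lambda)=-1$ at a level strictly below $r_0$, where the hypothesis is silent, so no contradiction materialises.

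That window cannot be closed, because the implication ``tight at $r$ implies $r$ is a capacity'' is false as the definition is written. Take the weight sequence $(5;\,;3)$, e.g.\ $\Omega=\sconv{(0,0),(2,0),(2,3),(0,5)}$, so that $C(\delta,\beta)=\frac12\delta(\delta+3)-\frac12\beta(\beta+1)$ and $g=5\delta-3\beta$. One checks that $\on{max}\{C:g\le 12\}=8$, attained both at $(\delta,\beta)=(3,1)$ with $g=12$ and at $(4,3)$ with $g=11$; so $\Omega$ is tightly constrained at $12$ in the sense of Definition \ref{def:tight} (the maximum is attained by \emph{some} point with $g=12$), yet $\on{cap}_{X_\Omega}(12)=\on{cap}_{X_\Omega}(11)=9$ and $12$ is not a capacity ($c_8=11$, $c_9=13$). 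Computing further, tightness fails only at $r=1,3,10$ and holds at every $11\le r\le 42$, while the flat levels $12,17,19,26$ persist; so this appears to be a counterexample to the lemma itself with $r_0=11$, not merely to the pointwise implication. The equivalence becomes correct (indeed immediate) only if ``attained by some'' is strengthened to ``attained only by'' points with $g=r$. The proof given in the paper argues only the easy direction and never engages with this one, so your difficulty reflects a genuine defect in the statement rather than a missing trick on your part.
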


\begin{proof} By definition the cap function of $X_\Omega$ is $1$ plus the largest value of $k$ such that $c_k(X_\Omega)\leq r$. If there is some $k$ with $c_k(X_\Omega)=r$ then this largest value of $k$ will be achieved by some $k$ with $c_k(X_\Omega)=r$ by monotonicity. The largest value of $k$ corresponds to a value of $C(\delta,\alpha_i,\beta_j)$ from the reasoning above, for which the corresponding capacity takes the value $\delta c-\sum\alpha_ia_i-\sum\beta_jb_j=r$.
\end{proof}

Equivalently, $\on{cap}_{X_\Omega}(r+1)>\on{cap}_{X_\Omega}(r)$ for all $r\geq r_0$, so that $\on{cap}_{X_\Omega}$ is eventually strictly increasing.

\begin{example} Suppose $X_\Omega=E(a,b)$ is an ellipsoid with $a$ prime, $a<b$, and $\on{gcd}(a,b)=1$. Then $X_\Omega$ is tightly constrained with lower bound $(a-1)b$ to cover all residues mod $a$.
\end{example}

\begin{lemma} \label{lem:wt} Suppose $\Omega$ has at least one weight equal to $1$. Then $\Omega$ is tightly constrained with lower bound $r_0=0$.
\end{lemma}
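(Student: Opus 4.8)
The plan is to stay inside the reduced optimization problem set up above and exploit the reformulation of tightness. Writing $L(\delta,\alpha_i,\beta_j)=\delta c-\sum_i\alpha_ia_i-\sum_j\beta_jb_j$ for the constraint functional and $\wt M(r):=\on{max}\{C(\delta,\alpha_i,\beta_j):L=r\}$ for the maximum of $C$ on a single slice, the condition that $\Omega$ be tightly constrained with lower bound $0$ is equivalent to $\wt M$ being nondecreasing on $\Z_{\ge0}$: for $r\ge0$ the maximum of $C$ over the half-space $\{L\le r\}$ equals $\on{max}_{0\le r'\le r}\wt M(r')$, and this running maximum is attained on the slice $L=r$ exactly when $\wt M(r)\ge\wt M(r')$ for all $r'\le r$. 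The first reduction is therefore to guarantee that every slice is nonempty: after discarding the trivial case $c=1$ (where $\Omega=B(1)$), I may assume the weight equal to $1$ is some $a_1$ (or, symmetrically, some $b_j$). Because $a_1=1$ contributes the coefficient $-1$ to $L$, the value $L=r$ is realised for every $r\ge0$ (take $\delta$ with $\delta c\ge r$ and set $\alpha_1=\delta c-r$), so $\wt M$ is finite throughout and asking for its monotonicity makes sense.

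Second, I would reduce the monotonicity of $\wt M$ to a finite check using the recursion established just before Definition \ref{def:tight}. That lemma produces a maximizer on the slice $L=r+\lambda$ from one on the slice $L=r$ by adding $(c;a_i;b_j)$, and substituting into $C$ shows that $\wt M(r+\lambda)-\wt M(r)$ is an affine function of $r$ with slope $1$. Consequently the first differences $D(r):=\wt M(r+1)-\wt M(r)$ satisfy $D(r+\lambda)=D(r)+1$, so within each residue class mod $\lambda$ the sequence $D$ is strictly increasing. Hence $\wt M$ is nondecreasing on all of $\Z_{\ge0}$ if and only if $D(r)\ge0$ for the finitely many base residues $r=0,\dots,\lambda-1$, which is the only thing left to establish.

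The heart of the matter is thus an exchange argument for these base values, and this is where the unit weight is meant to do its work. Given a maximizer $(\delta,\alpha,\beta)$ over $\{L\le r\}$ chosen with $L$ maximal, if $\alpha_1\ge1$ and $L<r$ then lowering $\alpha_1$ by one raises $L$ by $1$ (preserving feasibility) while raising $C$ by $\alpha_1$, contradicting maximality; so at the extremal maximizer one necessarily has $\alpha_1=0$. To then move onto the slice $L=r$ I would increase $\delta$ by one, gaining $\delta+2$ in $C$ and $c$ in $L$, and re-pad with $\alpha_1=c-(r-L)$ to land exactly at $L=r$, losing $\tfrac12\alpha_1(\alpha_1+1)$ in $C$; the aim is to show the net change is nonnegative, so that the new point is still a maximizer and tightness holds at $r$.

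I expect the main obstacle to be exactly this last step in the small-budget regime. When $\delta$ is large the gain $\delta+2$ easily dominates the re-padding penalty, but for the smallest residues the optimal $\delta$ is small and the quadratic cost of re-padding can rival the linear gain, so the crude exchange need not preserve maximality on the nose. Controlling these finitely many base cases — showing that the presence of a weight equal to $1$ forces $D(r)\ge0$ for every $r=0,\dots,\lambda-1$, and not merely for $r$ beyond some threshold — is the delicate part, and is precisely where the full strength of the hypothesis and the detailed arithmetic of the weight sequence must enter.
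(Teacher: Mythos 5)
Your reduction is sound as far as it goes: reformulating tightness with lower bound $0$ as monotonicity of the slice maxima $\wt{M}$, and extracting the periodicity $D(r+\lambda)=D(r)+1$ from the recursion lemma, are both correct and legitimately reduce the problem to the finite check $D(r)\ge0$ for $r=0,\dots,\lambda-1$. But that finite check is exactly where you stop, and it is the entire content of the lemma, so as written the proposal is not a proof. For comparison, the paper dispenses with the slice/periodicity scaffolding entirely and runs only your first exchange, applied directly to a maximiser of $C$ over the half-space $\{L\le r\}$: if $L<r$ there, decrement $\alpha_1$ (where $a_1=1$), which raises $L$ by $1$ while increasing $C$ by $\alpha_1$, contradicting maximality. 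That one step is the whole proof.

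The case you could not close --- a maximiser with $\alpha_1=0$ and $L<r$, where decrementing is impossible and the $\delta\mapsto\delta+1$ re-padding costs $\tfrac12\alpha_1'(\alpha_1'+1)$ against a gain of only $\delta+2$ --- is precisely the case the paper's argument silently omits as well: its displayed strict inequality $C(\delta,\alpha_i',\beta_j)>C(\delta,\alpha_i,\beta_j)$ fails when $\alpha_1=0$, and $\alpha_1'=-1$ is not even a feasible index. Your instinct that this is where the real difficulty lives is correct, and in fact the step cannot be repaired as stated. Take $\Omega=\on{Conv}((0,0),(3,0),(1,2),(0,2))$, i.e.\ $\Delta_3$ with a unit corner triangle removed, which has weight sequence $(3;\emptyset;1)$ and hence a weight equal to $1$. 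On the slice $L=1$ one is forced to take $\beta_1=3\delta-1\ge2$, so $C\le 2-3=-1$, while $(\delta,\beta_1)=(0,0)$ gives $C=0$ with $L=0$; thus the maximum over $\{L\le1\}$ is attained only off the slice $L=1$ and $D(0)=-1<0$. Consistently, Lemma \ref{lem:sharp} and Theorem \ref{thm:opt} both give $c_0(X_\Omega)=0$ and $c_1(X_\Omega)=2$, so the value $r=1$ is never a capacity and $\Omega$ is not tightly constrained with lower bound $0$. So the gap you flagged is genuine --- in your write-up and in the paper's --- and closing it requires restricting the statement (e.g.\ to $c=1$, or to a larger lower bound $r_0$) rather than a cleverer exchange.
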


\begin{proof} Suppose $(c;a_i;b_j)$ is a weight sequence with $a_1=1$. Let $(\delta,\alpha_i,\beta_j)$ maximise $C(\delta,\alpha_i,\beta_j)$ subject to $\delta c-\sum_i\alpha_ia_i-\sum_j\beta_jb_j\leq r$. Suppose $\delta c-\sum_i\alpha_ia_i-\sum_j\beta_jb_j<r$. Modify $(\delta,\alpha_i,\beta_j)$ to $(\delta,\alpha_i',\beta_j)$ where $\alpha_1'=\alpha_1-1$ and $\alpha_i'=\alpha_i$ for $i\geq2$. This sequence has $C(\delta,\alpha_i',\beta_j)=\frac{1}{2}\delta(\delta+3)-\frac{1}{2}(\alpha_1-1)\alpha_1-\sum\frac{1}{2}\alpha_i(\alpha_i+1)-\sum \frac{1}{2}\beta_j(\beta_j+1)>C(\delta,\alpha_i,\beta_j)$ and $\delta c-(\alpha_1-1)a_1-\sum \alpha_ia_i-\sum\beta_jb_j=\delta c-\sum_i\alpha_ia_i-\sum_j\beta_jb_j+b_1\leq r$. This contradicts the fact that $(\delta,\alpha_i,\beta_j)$ was maximal.
\end{proof}

\begin{conjecture} \label{conj:wt} Suppose that $\on{gcd}\{c,a_1,\dots,a_s,b_1,\dots,b_t\}=1$. Then $\Omega$ is tightly constrained.
\end{conjecture}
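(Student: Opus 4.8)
The plan is to prove the statement directly, reducing it to an exchange argument of the kind behind Lemma \ref{lem:wt} and then supplying the missing input from the hypothesis $\on{gcd}\{c,a_1,\dots,a_s,b_1,\dots,b_t\}=1$. By the lemma immediately after Definition \ref{def:tight}, $\Omega$ is tightly constrained with lower bound $r_0$ exactly when, for every $r\geq r_0$, the maximum of $C(\delta,\alpha_i,\beta_j)$ over nonnegative integers subject to $c\delta-\sum_i\alpha_i a_i-\sum_j\beta_j b_j\leq r$ is attained at some configuration whose linear form equals $r$; equivalently the capacity sequence $c_k(X_\Omega)$ assumes every integer value $\geq r_0$. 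The entire task is therefore to rule out, for all large $r$, the presence of slack $s:=r-(c\delta-\sum_i\alpha_i a_i-\sum_j\beta_j b_j)>0$ at every optimiser.

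First I would isolate the exchange principle behind Lemma \ref{lem:wt}: a slack of $a_1=1$ is destroyed by the single move $\alpha_1\mapsto\alpha_1-1$, which raises the constraint value by $a_1$ and raises $C$ by $\alpha_1\geq 0$. In general a move by an integer vector $(\Delta\delta,\Delta\alpha_i,\Delta\beta_j)$ changes the constraint value by $c\,\Delta\delta-\sum_i a_i\,\Delta\alpha_i-\sum_j b_j\,\Delta\beta_j$ and changes $2C$ by a quantity that is affine-plus-quadratic in the current multiplicities. Since $\on{gcd}\{c,a_i,b_j\}=1$, for any prescribed slack $s$ there is an integer vector realising that exact change in the constraint value, so the arithmetic of landing on $r$ is never the problem; the problem is to realise it by a move that keeps every multiplicity nonnegative and does not decrease $C$. (For the ellipsoid $E(a,b)$ this is transparent: the capacity values are exactly the numerical semigroup $\langle a,b\rangle$, and the conjecture is precisely the Chicken McNugget theorem, with $r_0$ the Frobenius number $ab-a-b$ plus one.)

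Second I would localise modulo $\lambda=c^2-\sum_i a_i^2-\sum_j b_j^2$ using the recursion lemma just before Definition \ref{def:tight}. Writing $N(r)$ for the maximum of $C$ subject to form $=r$, that lemma maps an optimiser for $r$ to the optimiser $(\delta+c,\alpha_i+a_i,\beta_j+b_j)$ for $r+\lambda$, and a short computation gives $N(r+\lambda)=N(r)+r+\tfrac12(\lambda+3c-\sum_i a_i-\sum_j b_j)$. Hence along each residue class mod $\lambda$ the gaps of $N$ grow linearly, so $N$ is eventually strictly increasing on that class; the residual difficulty is the interleaving of the $\lambda$ progressions $N(\rho+\lambda x)$. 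Concretely, since every integer past some bound lies in the value semigroup when $\on{gcd}=1$, tight constraint is equivalent to $N(r)$ being a strict running maximum for all large $r$, i.e. $N(r)>N(r')$ whenever $r'<r$.

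The hard part, and the reason the statement is only conjectural, is exactly this running-maximum property across residue classes. The natural attack — take an optimiser of minimal slack and apply the exchange principle to close the slack — founders because a general gcd-$1$ exchange of constraint-weight $s$ must \emph{raise} some multiplicities, and the concave terms $-\tfrac12\alpha_i(\alpha_i+1)$ then cost $C$; the net effect is not sign-definite and depends on the optimiser, in contrast to the monotone weight-$1$ move. A proof would need to show that for $r\gg 0$ an optimiser always carries some multiplicity large enough (growing with $r$) to absorb the required increases at no net cost, yielding an explicit Frobenius-type bound for $r_0$ — a genuinely nonlinear analogue of the ellipsoid computation. I expect the most promising route is through the lattice-point form of the cap function (Theorem \ref{thm:reform} and its combinatorial shadow), where tight constraint says the maximal lattice-point count among convex lattice domains of $\Omega$-perimeter $\leq r$ is realised with perimeter exactly $r$, and where $\on{gcd}=1$ should surface as primitivity of the boundary data forcing every residue mod $\lambda$ to be met; turning that heuristic into a proof is the real obstacle.
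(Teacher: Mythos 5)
The statement you were asked to prove is stated in the paper as Conjecture \ref{conj:wt} and is \emph{not} proved there: the paper only establishes the special case where some weight equals $1$ (Lemma \ref{lem:wt}), and explicitly leaves the general $\on{gcd}=1$ case open. So there is no proof in the paper to compare against, and your proposal does not close the gap either --- as you yourself say, ``turning that heuristic into a proof is the real obstacle.'' What you have written is a correct and well-informed analysis of the difficulty rather than a proof.

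To be concrete about where the gap sits: your reduction is sound. Tight constraint with lower bound $r_0$ is equivalent (by the lemma following Definition \ref{def:tight}) to every integer $r\geq r_0$ being a value of some $c_k(X_\Omega)$, and the recursion lemma before Definition \ref{def:tight} does give $N(r+\lambda)=N(r)+r+\mathrm{const}$ along each residue class mod $\lambda$, matching the difference equation $(*)$ in the paper. Your observation that the ellipsoid case reduces to the numerical-semigroup (Frobenius) statement is consistent with the paper's example of $E(a,b)$ with $a$ prime and lower bound $(a-1)b$. But the step that would constitute a proof --- showing that for all large $r$ an optimiser with positive slack can be exchanged, via an integer combination realising the slack in the constraint, into a configuration with zero slack and no smaller value of $C$ --- is precisely where the argument founders, because a general $\on{gcd}=1$ relation forces some multiplicities to increase and the quadratic concave penalty $-\tfrac12\alpha_i(\alpha_i+1)$ is then not controlled without knowing that the optimiser carries multiplicities growing with $r$. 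You name this obstruction accurately but do not overcome it. The honest verdict is that the proposal establishes nothing beyond what Lemma \ref{lem:wt} already gives, and the conjecture remains open both in the paper and in your attempt.
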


Notice that the conjecture will certainly fail for weight sequences without the coprimality assumption. For example, the ball $B(2)$ has capacities that are all even numbers and so there can be no odd values of the constraint. We will henceforth make the assumption that $\Omega$ is tightly constrained.

\begin{cor} \label{cor:poly} For a tightly constrained convex lattice domain $\Omega$ with lower bound $r_0=0$, $\partial\Omega$ is an optimal path among lattice paths of length at most $\ell_\Omega(\partial\Omega)$.
\end{cor}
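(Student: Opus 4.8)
The plan is to reduce the corollary to the single identity $\on{cap}_{X_\Omega}(\lambda)=L_\Omega$, where $\lambda=\ell_\Omega(\partial\Omega)$. By the corollary to Theorem \ref{thm:opt} we have $\on{cap}_{X_\Omega}(r)=\on{max}\{L_\Lambda:\ell_\Omega(\partial\Lambda)\leq r\}$, the maximum ranging over convex lattice domains $\Lambda$; so the assertion that $\partial\Omega$ is optimal among lattice paths of $\Omega$-perimeter at most $\ell_\Omega(\partial\Omega)$ is precisely the statement that this maximum at $r=\lambda$ is realised by $\Omega$ itself, i.e. that it equals $L_\Omega$. I would then pass to the optimisation side, using the reduction $\on{cap}_{X_\Omega}(r)=1+\on{max}\{C(\delta,\alpha_i,\beta_j):\delta c-\sum_i\alpha_ia_i-\sum_j\beta_jb_j\leq r\}$ established above, and write $g$ for the linear form $\delta c-\sum_i\alpha_ia_i-\sum_j\beta_jb_j$ appearing in the constraint.

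The lower bound is immediate. The triple $(\delta,\alpha_i,\beta_j)=(c,a_i,b_j)$ is feasible for $g\leq\lambda$, since $g(c,a_i,b_j)=c^2-\sum a_i^2-\sum b_j^2=\lambda$, and there
$$C(c,a_i,b_j)=\frac{1}{2}c(c+3)-\sum_i\frac{1}{2}a_i(a_i+1)-\sum_j\frac{1}{2}b_j(b_j+1)=L_\Omega-1$$
by the lattice-point count recorded in the combinatorics of weight sequences. Hence $\on{cap}_{X_\Omega}(\lambda)\geq L_\Omega$, witnessed by $\Omega$.

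For the reverse inequality I would show that $(c,a_i,b_j)$ is in fact a maximiser of $C$, and this is where the content lies. First I seed the argument at $r=0$: since $c_0(X_\Omega)=0$ one has $\on{cap}_{X_\Omega}(0)=1$, so the maximum of $C$ over $g\leq 0$ equals $0$; as the origin attains $C=0$ with $g=0$, it maximises $C$ over $g=0$ as well. Now I invoke the exchange Lemma above — which sends a maximiser of $C$ subject to $g=r$ to the triple $(\delta+c,\alpha_i+a_i,\beta_j+b_j)$, a maximiser subject to $g=r+\lambda$ — with $r=0$, producing $(c,a_i,b_j)$ as a maximiser of $C$ over $g=\lambda$. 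Finally, the tightly constrained hypothesis with lower bound $r_0=0$, applied at $r=\lambda$, guarantees that the maximum of $C$ over $g\leq\lambda$ is attained on $g=\lambda$, hence equals $C(c,a_i,b_j)=L_\Omega-1$. Therefore $\on{cap}_{X_\Omega}(\lambda)=L_\Omega$ and $\partial\Omega$ is optimal.

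I expect the main obstacle to be exactly this upper bound, but essentially all of its difficulty is already absorbed into the exchange Lemma, whose exchange argument is the delicate point (one must check that the competitor $(\delta'-c,\alpha_i'-a_i,\beta_j'-b_j)$ stays admissible). Granting that Lemma, the only steps needing care are the correct seeding at $r=0$ and the single appeal to tight constraint at $r=\lambda$ to replace the inequality $g\leq\lambda$ by the equality $g=\lambda$. Iterating the exchange Lemma $r$ times from the origin yields $(rc,ra_i,rb_j)$ as a maximiser over $g=r\lambda$, and the same computation gives $\on{cap}_{X_\Omega}(r\lambda)=L_{r\Omega}=\on{ehr}_\Omega(r)$ for every $r\in\Z_{\geq0}$; this upgrades the conclusion to the dilate form $r\Omega$ stated in the introduction.
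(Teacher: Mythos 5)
Your proposal is correct and follows essentially the same route as the paper: the paper's own proof likewise seeds the exchange lemma at $r=0$ with the zero triple and reads off $\on{cap}_{X_\Omega}(\lambda)=1+C(c,a_i,b_j)=L_\Omega$, leaving the appeal to the tightly constrained hypothesis implicit. Your version simply makes explicit the steps (feasibility of $(c,a_i,b_j)$, the use of $r_0=0$ to pass from the constraint $g\le\lambda$ to $g=\lambda$) that the paper compresses into two lines.
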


\begin{proof} Clearly $\on{cap}_{X_\Omega}(0)=1+0$ is attained by $(\delta,\alpha_i,\beta_j)=(0,0,\dots,0)$. Hence,
$$\on{cap}_{X_\Omega}(\lambda)=1+C(c,a_i,b_j)=\frac{1}{2}(c+1)(c+2)-\sum a_i(a_i+1)-\sum b_j(b_j+1)=L_\Omega$$
as required.
\end{proof}

\begin{lemma} Let $\Omega$ be a tightly constrained convex lattice domain. Denote the $\Omega$-perimeter of $\Omega$ by $\lambda$. Then there exists $x_0\in\Z_{\geq0}$ such that for all $x\geq x_0$ and for each $r=0,\dots,\lambda-1$,
$$\on{cap}_{X_\Omega}(\lambda x+r)=\on{Vol}(\Omega)x^2+(\frac{1}{2}L_{\partial\Omega}+r)x+\gamma_r$$
for some $\gamma_r\in\Z$, where $L_{\partial\Omega}$ is the number of lattice points on the boundary of $\Omega$.
\end{lemma}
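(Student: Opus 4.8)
The plan is to feed the shift recursion of the shift lemma above (the one relating the equality-constrained optima at $r$ and $r+\lambda$) into the formula $\on{cap}_{X_\Omega}(R)=1+\max C(\delta,\alpha_i,\beta_j)$ and then solve the resulting difference equation in $x$. Write $N(R)$ for the maximum of $C$ subject to the inequality $\delta c-\sum_i\alpha_ia_i-\sum_j\beta_jb_j\leq R$, so that $\on{cap}_{X_\Omega}(R)=1+N(R)$, and write $M(R)$ for the maximum of $C$ subject to the corresponding equality $=R$. The tightly constrained hypothesis with lower bound $r_0$ says exactly that for $R\geq r_0$ the inequality-constrained maximum is attained at a point where equality holds; since that point is feasible for the equality problem while every equality-feasible point is inequality-feasible, this forces $N(R)=M(R)$ and exhibits a maximizer of $M(R)$. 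It therefore suffices to determine $M$ along each arithmetic progression $R=\lambda x+r$, $r\in\{0,\dots,\lambda-1\}$, for $\lambda x+r\geq r_0$.

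Fix such an $r$ and let $x_0=\lceil r_0/\lambda\rceil$, so that $\lambda x+r\geq\lambda x_0\geq r_0$ for all $x\geq x_0$ and all $0\leq r<\lambda$. For $x\geq x_0$ a maximizer of $M(\lambda x+r)$ exists, and the shift lemma (applied with $r$ replaced by $\lambda x+r$) turns it into a maximizer of $M(\lambda(x+1)+r)$ by adding $(c,a_i,b_j)$. Evaluating $C$ at the shifted point --- this is precisely the expansion in the proof of that lemma, now read in the increasing direction --- and using $\delta c-\sum_i\alpha_ia_i-\sum_j\beta_jb_j=\lambda x+r$ together with $c^2-\sum_ia_i^2-\sum_jb_j^2=\lambda$ yields the recursion
$$M(\lambda(x+1)+r)=M(\lambda x+r)+(\lambda x+r)+K,\qquad K:=\on{Vol}(\Omega)+\tfrac12\Big(3c-\sum_ia_i-\sum_jb_j\Big).$$
To interpret $K$ I compare the weight-sequence count $L_\Omega=1+\tfrac12c(c+3)-\sum_i\tfrac12a_i(a_i+1)-\sum_j\tfrac12b_j(b_j+1)$ recorded above with Pick's theorem $L_\Omega=\on{Vol}(\Omega)+\tfrac12 L_{\partial\Omega}+1$; a one-line subtraction gives $\tfrac12(3c-\sum_ia_i-\sum_jb_j)=\tfrac12 L_{\partial\Omega}$, so $K=\on{Vol}(\Omega)+\tfrac12 L_{\partial\Omega}=L_\Omega-1$ (consistent with Corollary \ref{cor:poly} at $r=0$).

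It remains to solve the recursion. Its inhomogeneous term $\lambda x+r+K$ is linear in $x$, so a quadratic $P(x)$ with $P(x+1)-P(x)=\lambda x+r+K$ is found by matching coefficients; using $\tfrac{\lambda}{2}=\on{Vol}(\Omega)$ and $K-\tfrac{\lambda}{2}=\tfrac12 L_{\partial\Omega}$ this gives $P(x)=\on{Vol}(\Omega)x^2+(\tfrac12 L_{\partial\Omega}+r)x$. Hence $M(\lambda x+r)=P(x)+c_r$ for all $x\geq x_0$, where the constant $c_r$ is pinned down by the single initial value $M(\lambda x_0+r)$, and
$$\on{cap}_{X_\Omega}(\lambda x+r)=1+M(\lambda x+r)=\on{Vol}(\Omega)x^2+\Big(\tfrac12 L_{\partial\Omega}+r\Big)x+\gamma_r,\qquad \gamma_r:=1+c_r.$$
Integrality of $\gamma_r$ is automatic: applying Pick's theorem to the dilate $x\Omega$ (whose boundary carries $x L_{\partial\Omega}$ lattice points) shows $P(x)=\on{ehr}_\Omega(x)-1+rx\in\Z$ for every $x\in\Z$, and since $\on{cap}_{X_\Omega}$ is integer-valued, $\gamma_r=\on{cap}_{X_\Omega}(\lambda x+r)-P(x)\in\Z$.

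The only genuinely delicate point is the bridge between the two optimization problems. The shift lemma governs the equality-constrained optimum $M$, whereas the cap function is built from the inequality-constrained optimum $N$; reconciling them is exactly the role of the tightly constrained hypothesis, and one must choose $x_0$ so that every term $\lambda x+r$ of the progression stays $\geq r_0$, lest an iteration of the recursion stray outside the range where $N=M$. Everything else --- the forward expansion computing $K$ and the two applications of Pick's theorem --- is routine bookkeeping once these formulas are assembled.
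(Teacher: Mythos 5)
Your proposal is correct and follows essentially the same route as the paper: derive the difference equation $\on{cap}_{X_\Omega}(r+\lambda(x+1))=\on{cap}_{X_\Omega}(r+\lambda x)+r+\lambda x+L_\Omega-1$ from the shift lemma together with the tightly constrained hypothesis, then solve it and convert the linear coefficient via Pick's theorem. Your write-up is in fact more careful than the paper's at the one delicate point (the passage between the equality-constrained and inequality-constrained optima, and the choice of $x_0$), which the paper glosses over.
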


\begin{proof} From Lemma \ref{lem:wt} and the assumption that $\Omega$ is tightly constrained one has that the maximum value of $C(\delta,\alpha_i,\beta_j)$ subject to $\delta c-\sum_i\alpha_ia_i-\sum_j\beta_jb_j\leq r+\lambda$ is
$$C(\delta',\alpha_i',\beta_j')+r+\frac{1}{2}c(c+3)-\sum\frac{1}{2}b_i(b_i+1)=C(\delta',\alpha_i',\beta_j')+r+L_\Omega-1$$
when $(\delta',\alpha_i',\beta_j')$ is maximal subject to $\delta' c-\sum_i\alpha_i'a_i-\sum_j\beta_j'b_j\leq r$, at least for large enough $r$. It follows that, for $r+\lambda x$ large enough,
\begin{equation} \tag{$*$}
\on{cap}_{X_\Omega}(r+\lambda(x+1))=\on{cap}_{X_\Omega}(r+\lambda x)+r+\lambda x+L_\Omega-1
\end{equation}
This implies that $\on{cap}_{X_\Omega}(r+\lambda x)$ is eventually a quadratic polynomial. Solving the difference equation ($*$) gives the leading term as $\lambda/2$ and gives the linear coefficient as $L_\Omega-\on{Vol}(\Omega)-1+r$. By Pick's formula the linear term is equal to $\frac{1}{2}L_{\partial\Omega}+r$, and we have seen that $\lambda/2=\ell_\Omega(\partial\Lambda)/2=\on{Vol}(\Omega)$.
\end{proof}

This is the desired quasipolynomial representation of $\on{cap}_{X_\Omega}$. However, we would also like this to have an algebro-geometric interpretation. The Ehrhart polynomial of $\Omega$, as a lattice polygon, is
$$\on{ehr}_\Omega(x)=\on{Vol}(\Omega)x^2+\frac{1}{2}L_{\partial\Omega}x+1$$

\begin{cor} \label{cor:main} Let $\Omega$ be a tightly constrained convex lattice domain of $\Omega$-perimeter $\lambda$. Then, for any $r\in\{0,1,\dots,\lambda-1\}$ and sufficiently large $x\in\Z_{\geq0}$
\begin{align*}
\on{cap}_{X_\Omega}(r+\lambda x)&=\on{ehr}_\Omega(x)+rx+\gamma_r \\
&=\on{hilb}_{(Y_{\Sigma(\Omega)},D_\Omega)}(x)+rx+\gamma_r
\end{align*}
for some $\gamma_r\in\Z$. In particular, for all sufficiently large $x\in\Z_{\geq0}$
$$\on{cap}_{X_\Omega}(\lambda x)=\on{ehr}_\Omega(x)+\gamma_0=\on{hilb}_{(Y_{\Sigma(\Omega)},D_\Omega)}(x)+\gamma_0$$
\end{cor}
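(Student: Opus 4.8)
The plan is to assemble Corollary \ref{cor:main} from the quadratic formula of the preceding lemma together with the polytope--divisor dictionary established earlier; the substantive work is already complete, so this reduces to matching polynomial coefficients and invoking the identification of lattice-point counts with global sections.

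First I would take the preceding lemma, which provides $x_0\in\Z_{\geq0}$ such that for all $x\geq x_0$ and each residue $r=0,\dots,\lambda-1$,
$$\on{cap}_{X_\Omega}(r+\lambda x)=\on{Vol}(\Omega)x^2+\left(\frac{1}{2}L_{\partial\Omega}+r\right)x+\gamma_r$$
for some $\gamma_r\in\Z$. Comparing against the Ehrhart polynomial displayed just before the corollary, $\on{ehr}_\Omega(x)=\on{Vol}(\Omega)x^2+\frac{1}{2}L_{\partial\Omega}x+1$, the two expressions agree in their $x^2$- and $x$-coefficients once one peels off the extra $rx$ term, and differ only by the constant $\gamma_r-1$. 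Renaming $\gamma_r-1$ as a new integer constant $\gamma_r$ then gives the first equality $\on{cap}_{X_\Omega}(r+\lambda x)=\on{ehr}_\Omega(x)+rx+\gamma_r$.

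For the second equality I would use the earlier corollary $L_{x\Omega}=h^0(xD_\Omega)$, which equates the number of lattice points in the $x$-th dilate of $\Omega$ with the dimension of global sections of $xD_\Omega$ on $Y_{\Sigma(\Omega)}$ (via the lemma that the polytope of $D_\Omega$ is the $90^\circ$ anticlockwise rotation of $\Omega$, a lattice-preserving transformation). Since $\Omega$ is a lattice polygon, Ehrhart's theorem gives $L_{x\Omega}=\on{ehr}_\Omega(x)$ for all $x\geq0$, whence $h^0(xD_\Omega)=\on{ehr}_\Omega(x)$ for all $x\geq0$; as $\on{hilb}_{(Y_{\Sigma(\Omega)},D_\Omega)}$ is by definition the polynomial eventually agreeing with $h^0(xD_\Omega)$, it must coincide with $\on{ehr}_\Omega$ as a polynomial. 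Substituting this identity into the first equality yields the second, and the final ``in particular'' display is simply the case $r=0$.

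I do not expect a genuine obstacle here: the content lies entirely in the preceding lemma. The only point requiring a moment's care is the mild bookkeeping distinction between the Hilbert \emph{function} $h^0(xD_\Omega)$ and the Hilbert \emph{polynomial} $\on{hilb}_{(Y_{\Sigma(\Omega)},D_\Omega)}$ --- one must note that the earlier corollary $L_{x\Omega}=h^0(xD_\Omega)$ already builds in the vanishing of higher cohomology for the ample Cartier divisor $D_\Omega$, so that the Ehrhart and Hilbert polynomials may be identified outright rather than only asymptotically.
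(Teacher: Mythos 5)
Your proposal is correct and matches the paper's intended deduction: the corollary is stated without a separate proof precisely because it follows immediately from the preceding lemma's quadratic formula, the displayed Ehrhart polynomial, and the earlier identity $L_{x\Omega}=h^0(xD_\Omega)$, which is exactly the chain of substitutions you carry out. Your remark on reconciling the Hilbert function with the Hilbert polynomial via vanishing of higher cohomology for the ample divisor $D_\Omega$ is the right (and only) point needing care.
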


We believe that always $\gamma_r=\on{cap}_{X_\Omega}(r)-1$, which is what one would obtain from the difference equation ($*$) holding for all $x\in\Z$, not just all sufficiently large $x$. This would in particular imply that $\gamma_0=0$. Suppose $X_\Omega$ is not tightly constrained. Assuming Conjecture \ref{conj:wt}, one can scale $\Omega$ to obtain a convex lattice domain $\Omega'$ that is tightly constrained. Let $q\Omega'=\Omega$. Then, using the scaling axiom from \S\ref{sec:prop}, for any $r=0,\dots,q-1$ one has
$$\on{cap}_{X_\Omega}(r+qx)=\on{cap}_{X_\Omega}(qx)=\on{cap}_{X_{\Omega'}}(x)$$
Thus, knowing Theorem \ref{thm:cap} for tightly constrained convex toric lattice domains is sufficient to completely describe the long term behaviour of the cap function for all convex toric lattice domains.

\begin{example} For $X_\Omega=B(2)$, one has
$$\on{cap}_{X_\Omega}(r)=
\begin{cases}
\on{cap}_{B(1)}(\frac{r}{2}) & r\equiv0\on{mod}{2} \\
\on{cap}_{B(1)}(\frac{r-1}{2}) & r\equiv1\on{mod}{2}
\end{cases}=\begin{cases}
\frac{1}{8}(r+2)(r+4) & r\equiv0\on{mod}{2} \\
\frac{1}{8}(r+1)(r+3) & r\equiv1\on{mod}{2}
\end{cases}$$
\end{example}

We conjecture that the word `eventually' may be dropped in all the above results, and that in fact the cap function of a tightly constrained convex toric lattice domain is given entirely by the quasipolynomial in Theorem \ref{thm:cap}. Of course, this is already proven if one of the weights of $\Omega$ is equal to $1$.

\end{document}